\newtheorem{theorem}{Theorem}[section]
\newtheorem{lemma}{Lemma}[section]
\newtheorem{corollary}{Corollary}[section]
\newtheorem{remark}{Remark}[section]
\newtheorem{proposition}{Proposition}[section]
\newtheorem{assumption}{Assumption}[section]
\newcolumntype{L}[1]{>{\raggedright\let\newline\\\arraybackslash\hspace{0pt}}m{#1}}
\newcolumntype{C}[1]{>{\centering\let\newline\\\arraybackslash\hspace{0pt}}m{#1}}
\newcolumntype{R}[1]{>{\raggedleft\let\newline\\\arraybackslash\hspace{0pt}}m{#1}}
\begin{document}

\title{Nonlinear Programming Methods for Distributed Optimization}


\author{Ion~Matei, \and John~S.~Baras
\thanks{Ion Matei  is with the Palo Alto Research Center (PARC), Palo Alto, CA 94304 ({\tt ion.matei@parc.com}) and John S. Baras is with the Institute for Systems Research at University of Maryland, College Park, MD 20742 ({\tt baras@umd.edu}).}
}

\maketitle

\begin{abstract}
 In this paper we investigate how standard nonlinear programming algorithms can be used to solve constrained optimization problems in a distributed manner. The optimization setup consists of a set of agents interacting through a communication graph that have as common goal the minimization of a function expressed as a sum of (possibly non-convex) differentiable functions. Each function in the sum corresponds to an agent and each agent has associated an equality constraint. By re-casting the distributed optimization problem into an equivalent, augmented centralized problem, we show that distributed algorithms result naturally from applying standard nonlinear programming techniques. Due to the distributed formulation, the standard assumptions and convergence results no longer hold. We emphasize what changes are necessary for convergence to still be achieved for three algorithms: two algorithms based on Lagrangian methods, and an algorithm based the method of multipliers. The changes in the convergence results are necessary mainly due to the fact that the (local) minimizers of the lifted optimization problem are not regular, as a results of the distributed formulation. Unlike the standard algorithm based on the method of multipliers, for the distributed version we cannot show that the theoretical superlinear convergence rate can be achieved.
\end{abstract}

\section{Introduction}
\label{sec_12011439}

Multi-agent, distributed optimization algorithms received a lot of attention in the recent years due to their applications in network
resource allocation, collaborative control, estimation and identification problems.  In these type of problems a group of agents has as common goal the optimization of a cost function under limited information and resources. The limited information may be induced by the fact that an agent can communicate with only a subset of the total set of agents, or/and by the fact that an agent is aware of only a part of the cost functions or constraint sets.

A distributed optimization algorithm was introduced in \cite{Nedic_2007}, where the convex optimization cost is expressed as a sum of functions and each function in the sum corresponds to an agent. In this formulation the agents interact with each other subject to a communication network, usually modeled as a undirected graph. The algorithm combines a standard (sub)gradient descent step with a consensus step; the latter being added to deal with the limited information about the cost function and about the actions of the agents. Extensions of this initial version followed in the literature. \cite{Nedic2011,Ram2010} include communication noise and errors on subgradients, \cite{5624570,5720506} assume a random communication graph, \cite{Nedic2011,Srivastava2011} study asynchronous versions of the algorithm,  \cite{Lobel2011} considers state-dependent communication topologies, while \cite{Cherukuri:2013} assumes directed communication graphs. Another modification of the algorithm described in \cite{Nedic_2007} was introduced in \cite{Johansson_2008}, where the authors change the order in which the consensus-step and the subgradient descent step are executed. The algorithms discussed above became popular in the signal processing community as well, being used for solving distributed filtering and parameter identification problems \cite{6197748,5373900}.
Consensus-based distributed optimization algorithms were further used to solve constrained convex optimization problems where all agents have the same constraint set \cite{Johansson2009SIAM,Nedic2011,Ram2010} or where each agent has its own set of constraints \cite{5404774, Srivastava2011}. Other approaches for obtaining distributed algorithms use dual decomposition \cite{Terelius:2011}, augmented Lagragian \cite{Jakovetic:2013, 5755209}, or in particular, distributed versions of the Alternating Direction Method of Multipliers (ADMM) algorithm \cite{Boyd:2011,Shi:2014,Wei:2012}. A summary of the relevant problem setups and approaches concerning distributed optimization is shown in Table \ref{tab_01051157}.
{\begin{table}[ht]
\footnotesize
\centering
{
\begin{tabular}{|C{1.5cm}|C{2.4cm}|C{2cm}|C{1.8cm}|C{3.5cm}|C{1cm}|}
\hline
\textbf{Cost type}          & \textbf{Constraints type}          & \multicolumn{2}{c|}{\textbf{Communication graph}}                                                                    & \textbf{Approach}                                                                          & \textbf{Authors} \\ \hline
\multirow{11}{*}{convex}    & \multirow{7}{*}{unconstrained}     & \multirow{5}{*}{time invariant} & \multirow{4}{*}{undirected}                                                        & combination of sub-gradient and consensus steps                                            &    \cite{Johansson_2008,Nedic_2007}              \\ \cline{5-6}
                            &                                    &                                 &                                                                                    & augmented Lagrangian method with a consensus step included in the inner step               &  \cite{Jakovetic:2013}                 \\ \cline{5-6}
                            &                                    &                                 &                                                                                    & randomized incremental subgradient method                                                  &   \cite{Johansson2009SIAM}               \\ \cline{5-6}
                            &                                    &                                 &                                                                                    & alternating direction method of multipliers                                                &   \cite{Boyd:2011,Shi:2014,Wei:2012}               \\ \cline{4-6}
                            &                                    &                                 & directed                                                                           & Laplacian-gradient dynamics                                                                &    \cite{Cherukuri:2013}              \\ \cline{3-6}
                            &                                    & \multirow{2}{*}{time varying}   & random                                                                             & combination of sub-gradient and consensus steps                                            &    \cite{5624570,5720506}              \\ \cline{4-6}
                            &                                    &                                 & state dependent                                                                    & combination of sub-gradient and consensus steps                                            &    \cite{Lobel2011}              \\ \cline{2-6}
                            & \multirow{2}{*}{global convex set} & \multirow{2}{*}{time invariant} & undirected                                                                         & combination of (projected) sub-gradient (with stochastic errors) and consensus steps       & \cite{Nedic2011,Ram2010}                 \\ \cline{4-6}
                            &                                    &                                 & undirected bounded communication delays & dual decomposition                                                                         &     \cite{Terelius:2011}             \\ \cline{2-6}
                            & \multirow{2}{*}{local convex sets } & \multicolumn{2}{c|}{\multirow{2}{*}{time invariant, undirected}}                                                     & combination of (projected) sub-gradient and consensus steps                                &      \cite{Nedic_2009}            \\ \cline{5-6}
                            &                                    & \multicolumn{2}{c|}{}                                                                                                & augmented Lagrangian method with a gossip step included in the inner step                  &     \cite{Jakovetic:2013}             \\ \hline
\multirow{2}{*}{non-convex} & global inequality constraints      & \multicolumn{2}{c|}{time varying, periodic strong connectivity}                                                      & approximate solution obtained using a Lagrangian duality method combined with consensus step &    \cite{Zhu:2013}              \\ \cline{2-6}
                            & local equality constraints         & \multicolumn{2}{c|}{time invariant, undirected}                                                                      & (augmented)Lagrangian methods                                                              &    \cite{Matei_CDC2013,Matei_CDC2014}              \\ \hline
\end{tabular}}
\caption{Problem setups and approaches for distributed optimization}
\label{tab_01051157}
\end{table}}

In this paper we study as well a distributed optimization problem whose goal is to minimize an objective function expressed as a sum of functions. Each function in the sum is associated to an agent that has assigned an equality constraint, as well. We propose three distributed algorithms: two first-order algorithms for solving the first order necessary optimality conditions and an algorithm inspired by the method of multipliers. The second first-order algorithm uses an augmented Lagrangian idea to obtain weaker conditions for local convergence.  The main message of this paper is that standard optimization techniques can be used to solve optimization problems in a distributed manner \textbf{as is}, provided appropriate changes in the convergence proofs are made to deal with the fact that the standard assumptions no longer hold as a result of lack of complete information. We  make no convexity assumptions on the cost and constraint functions, but we assume they are continuously differentiable. Consequently, our convergence results are local. Distributed algorithms for solving constrained, non-convex optimization problems were also proposed in \cite{Matei_CDC2013} and \cite{Zhu:2013}. This paper is based on two conference papers \cite{Matei_CDC2013,Matei_CDC2014} where only preliminary results are shown and most of the proofs were omitted due to space restrictions.

The paper is organized as follows: in Section \ref{sec_12011440} we formulate the constrained optimization problem while in Section \ref{sec_12011441} we introduced three distributed optimization algorithms for solving optimization problems with equality constraints. Section \ref{sec_12011442} presents the origins of the algorithms by demonstrating that our initial optimization problem is equivalent to a lifted optimization problem with equality constraints. Sections \ref{sec_12011443} and \ref{sec_12011444}. Some proofs and supporting results are included in the Appendix.

\emph{Notations and definitions}: For a matrix $A$, its $(i,j)$ entry is denoted by $[A]_{ij}$ and its transpose is given by $A'$. If $A$ is a symmetric matrix, $A\succ 0$ ($A \succeq 0$) means that $A$ is positive (semi-positive) definite. The nullspace and range of $A$ are denoted by $\textmd{Null}(A)$ and $\textmd{Range}(A)$, respectively. The symbol $\otimes$ is used to represent the Kronecker product between two matrices. The vector of all ones is denoted by $\mathds{1}$. Let $x$ and $Q$ be a vector and a set of vectors, respectively. By $x+Q$ we understand the set of vectors produced by adding $x$ to each element of $Q$, that is, $x+Q\triangleq \{x+y\ |\ y\in Q\}$. Let $\| \cdot\|$ be a vector norm. By $\|x-Q\|$ we denote the distance between the vector $x$ and the set $Q$, that is, $\|x-Q\|\triangleq \inf_{y\in Q}\|x-y\|$. Let $f:\mathds{R}^n\rightarrow \mathds{R}$ be a function. We denote by $\nabla f(x)$ and by $\nabla^2 f(x)$ the gradient and the Hessian of $f$ at $x$, respectively. Let $F:\mathds{R}^n\times \mathds{R}^m\rightarrow \mathds{R}$ be a function of variables $(x,y)$. The block descriptions of the gradient and of the Hessian of $F$ at $(x,y)$ are given by $\nabla F(x,y)'=\left(\nabla_x F(x,y)',\nabla_y F(x,y)'\right)$, and
$$\footnotesize \nabla^2 F(x,y)=\left( \begin{array}{cc}
\nabla_{xx}^2F(x,y) & \nabla_{xy}^2 F(x,y)\\
\nabla_{xy}^2 F(x,y) & \nabla_{yy}^2F(x,y)
\end{array}
\right),$$
respectively.
Let $\{A_i\}_{i=1}^N$ be a set of matrices. By $\textmd{diag}(A_i,\ i=1,\ldots,N)$ we understand a block diagonal matrix, where the $i^{\textmd{th}}$ block matrix is given by $A_i$. We say that the set $\mathcal{X}$ is an \emph{attractor} for the dynamics $x_{k+1}=f(x_k)$, if there exists $\epsilon>0$, such that for any $x_0\in S_{\epsilon}$, with $S_{\epsilon} = \left\{x\ |\ \|x-\mathcal{X}\|<\epsilon\right\}$,  $\lim_{k\rightarrow \infty}\|x_k-\mathcal{X}\|=0$.

\section{Problem description}
\label{sec_12011440}

In this section we describe the setup of our problem.  We present first the communication model followed by the optimization model. 
\subsection{Communication model}
 A set of $N$ agents interact with each other through a communication topology modeled as an undirected communication graph $\mathcal{G}=(\mathcal{V},\mathcal{E})$, where $\mathcal{V}=\{1,2,\ldots,N\}$ is the set of nodes and $\mathcal{E}=\{e_{ij}\}$ is the set of edges. An edge between two nodes $i$ and $j$ means that agents $i$ and  $j$ can exchange information (or can cooperate). We assume that at each time instant $k$ the agents can synchronously exchange information with their neighbors. We denote by $\mathcal{N}_i\triangleq \{j\ |\ e_{ij}\in \mathcal{E}\}$ the set of neighbors of agent $i$. Consider the set of pairs $\{(i,j), j\in \mathcal{N}_i,\ i=1,\ldots,N\}$ and let $\bar{N} = \sum_{i=1}^N|\mathcal{N}_i|$, where $|\cdot|$ denotes the cardinality of a set. We denote by $S\in \mathds{R}^{\bar{N}\times N}$ the (weighted) edge-node incidence matrix of graph $\mathcal{G}$, for which each row number corresponds to a unique pair $(i,j)$ from the previously defined set. The matrix $S$ is defined as
 {\small \begin{equation}
 \label{equ_02112218}
 [S]_{(ij),l}=\left\{\begin{array}{ll}
 s_{ij} & i = l,\\
-s_{ij} & j=l,\\
 0 & \textmd{otherwise},
 \end{array}
 \right.
 \end{equation}}
 where $s_{ij}$ are given positive scalars.

 \begin{remark}
 \label{rem_03081940}
 It is not difficult to observe that the matrix $L = S'S = (l_{ij})$ is a (weighted) Laplacian matrix corresponding to the graph $\mathcal{G}$ and $\textmd{Null}(L) = \textmd{Null}(S)$. Moreover, for any $i\neq j$ we have that $l_{ij} = -(s_{ij}^2+s_{ji}^2)$. $\Box$
 \end{remark}
 In the next sections we are going to make use of a set of properties of the matrices $S$ and $L$; properties that are grouped in what follows.
\begin{proposition}
\label{pro_10151123}
The matrix $S$ and $L$ defined with respect to a connected graph $\mathcal{G}$ satisfies the following properties:
\begin{enumerate}[(a)]
\item The nullspaces of $S$ and $L$ are given by $\textmd{Null}(S)=\textmd{Null}(L)=\{\gamma \mathds{1}\ |\ \gamma \in \mathds{R}\}$;
\item Let $\mathbf{S} = S\otimes I$ and $\mathbf{L} = L\otimes I$, where $I$ is the $n$-dimensional identity matrix. Then the nullspaces of $\mathbf{S}$ and $\mathbf{L}$ are given by $\textmd{Null}(\mathbf{S})=\textmd{Null}(\mathbf{L})=\{ \mathds{1}\otimes x \ |\ x \in \mathds{R}^n\}$.
\item Let $\boldsymbol{\lambda}$ be a vector in $\mathds{R}^{n\bar{N}}$ and let $\{{u}_1,...,{u}_m\}$ be an orthogonal basis spanning $\textmd{Null}({S}')$. Then the orthogonal projection of $\boldsymbol{\lambda}$ on $\textmd{Null}\left(\mathbf{S}'\right)$ is given by $\boldsymbol{\lambda}_{\perp} = \mathbf{J}\boldsymbol{\lambda}$, where $\mathbf{J}$ is the orthogonal projection matrix (operator) defined as
\begin{equation*}
\label{equ_10411123}
\mathbf{J}\triangleq J\otimes I,
\end{equation*}
where $J = UU'$ with $U = [u_1,...,u_m]$ is the orthogonal projection operator on $\textmd{Null}(S')$.
\end{enumerate}
\end{proposition}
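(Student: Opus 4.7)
The plan is to derive all three claims from the explicit definition in (\ref{equ_02112218}) together with the Kronecker mixed-product identity $(A\otimes B)(C\otimes D)=(AC)\otimes(BD)$. For part (a), I would read off from (\ref{equ_02112218}) that the $(i,j)$-th entry of $Sx$ equals $s_{ij}(x_i-x_j)$, with $s_{ij}>0$ whenever $(i,j)$ indexes a row. Hence $Sx=0$ is equivalent to $x_i=x_j$ for every edge of $\mathcal{G}$, and connectivity of $\mathcal{G}$ forces $x=\gamma\mathds{1}$ for some $\gamma\in\mathds{R}$. The identity $\textmd{Null}(L)=\textmd{Null}(S)$ then follows from $L=S'S$: the inclusion $\textmd{Null}(S)\subseteq\textmd{Null}(L)$ is immediate, and for the reverse $Lx=0\Rightarrow x'Lx=\|Sx\|^2=0\Rightarrow Sx=0$.

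Part (b) follows from applying the mixed-product identity twice. First, $\mathbf{S}(\mathds{1}\otimes x)=(S\mathds{1})\otimes x=0$ shows that the candidate set lies inside $\textmd{Null}(\mathbf{S})$. For the reverse inclusion, part (a) gives $\textmd{rank}(S)=N-1$, so $\textmd{rank}(\mathbf{S})=\textmd{rank}(S)\cdot n=(N-1)n$ and $\dim\textmd{Null}(\mathbf{S})=nN-(N-1)n=n$, which matches the dimension of $\{\mathds{1}\otimes x\mid x\in\mathds{R}^n\}$. The identical claim for $\mathbf{L}$ follows either by repeating the computation or by writing $\mathbf{L}=\mathbf{S}'\mathbf{S}$ and recycling the Gram-matrix argument from part (a).

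For part (c), the plan is to verify that $\mathbf{J}=J\otimes I$ is the orthogonal projector onto $\textmd{Null}(\mathbf{S}')$ by checking its three defining properties. Symmetry is immediate: $\mathbf{J}'=J'\otimes I=\mathbf{J}$, since $J=UU'$ is symmetric. For idempotency I use $U'U=I_m$ (orthonormality of $u_1,\ldots,u_m$) to conclude $J^2=UU'UU'=J$, and hence $\mathbf{J}^2=J^2\otimes I=\mathbf{J}$. For the range, the mixed-product rule yields $\textmd{Range}(\mathbf{J})=\textmd{Range}(J)\otimes\mathds{R}^n=\textmd{Null}(S')\otimes\mathds{R}^n$, while $\mathbf{S}'(v\otimes w)=(S'v)\otimes w=0$ for every $v\in\textmd{Null}(S')$ gives the inclusion $\textmd{Range}(\mathbf{J})\subseteq\textmd{Null}(\mathbf{S}')$. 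A final dimension count (both sides equal $(\bar{N}-N+1)n$) upgrades this to equality, at which point the three properties together identify $\mathbf{J}$ as the orthogonal projector and justify $\boldsymbol{\lambda}_\perp=\mathbf{J}\boldsymbol{\lambda}$. I do not anticipate any real obstacle here; the only discipline required is to keep the ambient dimensions $\mathds{R}^{nN}$ and $\mathds{R}^{n\bar{N}}$ straight while moving between $\mathbf{S}$ and $\mathbf{S}'$, which have different shapes.
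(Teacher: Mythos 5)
Your proposal is correct, and in fact the paper offers no proof of this proposition at all --- it is stated as a collection of standard facts about weighted incidence matrices and Laplacians of connected graphs, so there is nothing to compare against. Your argument (connectivity forces $x_i=x_j$ along every edge for part (a), the Gram identity $x'Lx=\|Sx\|^2$ to transfer the nullspace from $S$ to $L$, rank-of-Kronecker-product plus a dimension count for part (b), and verification of symmetry, idempotency, and range for part (c)) is the standard and complete justification. The only point worth flagging is that you correctly read the paper's phrase ``orthogonal basis'' as orthonormal, i.e.\ $U'U=I_m$; this is needed for $J=UU'$ to be idempotent and is clearly what the authors intend.
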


\subsection{Optimization model}

We consider a function $f:\mathds{R}^n \rightarrow \mathds{R}$ expressed as a sum of $N$ functions $f(x) = \sum_{i=1}^Nf_i(x)$, and a vector-valued function $h:\mathds{R}^n \rightarrow \mathds{R}^N$ where $h\triangleq (h_1,h_2,\ldots,h_N)'$, with $h_i:\mathds{R}^n\rightarrow \mathds{R}$ and $N\leq n$.

We make the following assumptions on the functions $f$ and $h$ and on the communication model.
\begin{assumption}
\label{assu_02091524}
\begin{enumerate}[(a)]
\item The functions $f_i(x)$ and $h_i(x)$, $i=1,\ldots,N$ are twice continuously differentiable;
\item Agent $i$ has knowledge of only functions $f_i(x)$ and $h_i(x)$, and scalars $s_{ij}$, for $j\in \mathcal{N}_i$;
 \item Agent $i$ can exchange information only with agents in the set of neighbors defined by $\mathcal{N}_i$;
\item The communication graph $\mathcal{G}$ is connected.
\end{enumerate}
\end{assumption}

The common goal of the agents is to minimize the following optimization problem with equality constraints
{\small \begin{eqnarray}
\nonumber
(\texttt{P}_1) & \min_{x\in \mathds{R}^n} & f(x),\\
\nonumber
& \textmd{subject to:}& h(x) = 0,
\end{eqnarray}}
under Assumptions \ref{assu_02091524}. Throughout the rest of the paper we assume that problem $(\texttt{P}_1)$ has at least one local minimizer.

Let $x^*$ be a local minimizer of $(\texttt{P}_1)$ and let $\nabla h\left(x^*\right)\triangleq \left[\nabla h_1\left(x^*\right),\nabla h_2\left(x^*\right),\ldots,\nabla h_N\left(x^*\right)\right]$
be a matrix whose columns are the gradients of the functions $h_i(x)$ computed at $x^*$. The following assumption is used to guarantee the uniqueness of the Lagrange multiplier vector $\psi^*$ appearing in the first order necessary conditions of $(\texttt{P}_1)$, namely $\nabla f(x^*) + \nabla h\left(x^*\right)\psi^* =0$.
\begin{assumption}
\label{assu_02091703}
Let $x^*$ be a local minimizer of $(\texttt{P}_1)$. The matrix $\nabla h\left(x^*\right)$ is full rank, or equivalently, the vectors $\left\{\nabla h_i\left(x^*\right)\right\}_{i=1}^N$ are linearly independent.
\end{assumption}
Together with some additional assumptions on $f(x)$ and $h(x)$, Assumption \ref{assu_02091703} is typically used to prove local convergence for the ``original'' method of multipliers applied to Problem $(\texttt{P}_1)$ (see for example Section 2.2, page 104 of \cite{Bertsekas1982}). As we will see in the next sections, the same assumption will be used to prove local convergence for a distributed version of the method of multipliers used to solved a `lifted` optimization problem with equality constrains.

\begin{remark}
\label{rem_02091543}
We assumed that each agent has an equality constraint of the type $h_i(x) = 0$. All the results presented in what follows can be easily adapted for the case where only $m\leq N$ agents have equality constraints.
\end{remark}

\section{Main results}
\label{sec_12011441}
In this section we present the main results of the paper, namely three distributed algorithms for solving $(\texttt{P}_1)$. As seen later in the paper, these algorithm are a result of applying Lagrangian methods and the method of multipliers to an augmented but equivalent version of $(\texttt{P}_1)$. The algorithms are used to solve the equivalent problem in a centralized manner, however due to the nature of the cost function and the constraints, they can be naturally implemented in a distributed manner.

\subsection{Distributed Algorithms}
Let $x^*$ be a local minimizer of $(\texttt{P}_1)$  and let $x_{i,k}$ denote agent $i$'s \emph{estimate} of $x^*$, at time-slot $k$. In addition, let us denote by $\mathbf{x}_k\in \mathds{R}^{nN}$, $\boldsymbol{\mu}_k\in \mathds{R}^{N}$ and $\boldsymbol{\lambda}_k\in \mathds{R}^{n\bar{N}}$ the vectors $\mathbf{x}_{k} =\left(x_{i,k}\right)$, $\boldsymbol{\mu}_k=(\mu_{i,k})$ and $\boldsymbol{\lambda}_k = (\lambda_{i,k})$, with $\lambda_{i,k} = (\lambda_{ij,k})$ for all $j\in \mathcal{N}_i$. The first algorithm based on Lagrangian methods for solving $(\texttt{P}_1)$, denoted by Algorithm $(\texttt{A}_1)$ is given by the following iterations:
\begin{eqnarray}
\label{equ_02091554}
{x}_{i,k+1} &=& {x}_{i,k} - \alpha \nabla f_i(x_{i,k})-\alpha \mu_{i,k}\nabla h_i(x_{i,k})-\\
\nonumber
&-&\alpha \sum_{j\in \mathcal{N}_i}\left(s_{ij}\lambda_{ij,k}-s_{ji}\lambda_{ji,k}\right),\ x_{i,0} = x_{i}^0,\\
\label{equ_02091555}
\mu_{i,k+1} &=&\mu_{i,k}+\alpha h_i(x_{i,k}),\ \mu_{i,0}=\mu_i^0,\\
\label{equ_02091556}
\lambda_{ij,k+1} &=& \lambda_{ij,k}+\alpha \left(s_{ij}x_{i,k}-s_{ji}x_{j,k}\right),\ \lambda_{ij,0}=\lambda_{ij}^0,\ j\in \mathcal{N}_i,
\end{eqnarray}
where $\alpha>0$ is the step-size of the algorithm, $\nabla f_i(x_{i,k})$ and  $\nabla h_i(x_{i,k})$  denote the gradients of functions $f_i(x)$ and $h_i(x)$, respectively, computed at $x_{i,k}$, and $x_{i}^0$, $\mu_i^0$ and $\lambda_{ij}^0$ are given scalars. In addition, the positive scalars $s_{ij}$ are the entries of the incidence matrix $S$ of the graph $\mathcal{G}$ defined in (\ref{equ_02112218}).

In the second algorithm referred to as Algorithm $(\texttt{A}_2)$, the iteration for updating agent $i^{\textmd{th}}$ estimate $x_{i,k}$ has two additional terms that will allow for weaker assumptions for proving convergence of the algorithm, compared to the previous algorithm:
\begin{eqnarray}
\nonumber
  {x}_{i,k+1}& = &{x}_{i,k} - \alpha \nabla f_i(x_{i,k})-\alpha \mu_{i,k}\nabla h_i(x_{i,k})-\\
\nonumber
  &-&\alpha \sum_{j\in \mathcal{N}_i}\left(s_{ij}\lambda_{ij,k}-s_{ji}\lambda_{ji,k}\right)-\alpha c  h_i(x_{i,k})\nabla h_i(x_{i,k})\\
\label{equ_02181536}
&-&\alpha c \sum_{j\in \mathcal{N}_i}l_{ij}(x_{i,k}-x_{j,k}),\ x_{i,0} = x_{i}^0,\\
\label{equ_02181537}
\mu_{i,k+1} &=&\mu_{i,k}+\alpha h_i(x_{i,k}),\ \mu_{i,0}=\mu_i^0,\\
\label{equ_02181538}
\lambda_{ij,k+1} &=& \lambda_{ij,k}+\alpha \left(s_{ij}x_{i,k}-s_{ji}x_{j,k}\right),\ \lambda_{ij,0}=\lambda_{ij}^0,\ j\in \mathcal{N}_i,
\end{eqnarray}
where in addition to the parameters of Algorithm $(\texttt{A}_1)$, we have a new positive parameter $c$, and $l_{ij} = s_{ij}^2+s_{ji}^2$.

Finally the third distributed algorithm denoted by Algorithm $(\texttt{A}_3)$ has at its origin the method of multipliers applied to the augmented and equivalent version of $(\texttt{P}_1)$. The iterations of the algorithm are given by
{\small \begin{eqnarray}
\nonumber
\mathbf{x}_{k} &=&\arg\min_{\bf{x}} \sum_{i}f_i(x_i)+\mu_{i,k}h_i(x_i)+\\
\nonumber
&&+\sum_{j\in \mathcal{N}_i}\lambda_{ij,k}s_{ij}(x_i-x_j)+\frac{c_k}{2}h_i(x_i)^2+\\
\label{equ_02091554}
&& +\frac{c_k}{2}\sum_{j\in \mathcal{N}_i} (s_{ij}^2+s_{ji}^2)x_i(x_i-x_j),\ \mathbf{x}_{0} = \mathbf{x}^0,\\
\label{equ_02091555}
\mu_{i,k+1} &=&\mu_{i,k}+c_k h_i(x_{i,k}),\ \mu_{i,0}=\mu_i^0,\ i=1,\ldots,N,\\
\label{equ_02091556}
\lambda_{ij,k+1} &=& \lambda_{ij,k}+c_k s_{ij}(x_{i,k}-x_{j,k}),\ \lambda_{ij,0}=\lambda_{ij}^0,\ j\in \mathcal{N}_i,
\end{eqnarray}}
where $\{c_k\}$ is an non-decreasing sequence of positive numbers known by all agents. Note that at each time instant $k$ we need to solve the unconstrained optimization problem (\ref{equ_02091554}). For Algorithm $(\texttt{A}_3)$ to be distributed we need to provide a distributed algorithm that solves (\ref{equ_02091554}). Due to the structure of the cost function in (\ref{equ_02091554}), such an algorithm results from using a gradient-descent method, namely
{\small \begin{eqnarray}
\nonumber
x_{i,\tau+1}&=& x_{i,\tau} - \alpha_{\tau}\left[\nabla f_i(x_{i,\tau})+\mu_{i,k}\nabla h_i(x_{i,\tau}) +\right. \\
\nonumber
&+& \sum_{j\in \mathcal{N}_i}(s_{ij}\lambda_{ij,k}-s_{ji}\lambda_{ji,k})+c_k\nabla h_i(x_{i,\tau}) h_i(x_{i,\tau})\\
\label{equ_03121344}
&+&c_k \sum_{j\in \mathcal{N}_i}(s_{ij}^2+s_{ji}^2)(x_{i,\tau}-x_{j,\tau})], \ i=1,\ldots,N
\end{eqnarray}}
with $x_{i,0} = (\mathbf{x}_k)_i$, and where $\{\alpha_{\tau}\}$ is a globally known sequence of step-sizes for the iteration (\ref{equ_03121344}). Note that we denote by $\tau$ the iteration index for the algorithm used to solve (\ref{equ_02091554}).

\subsection{Considerations on the distributed algorithms}

Algorithms $(\texttt{A}_1)$, $(\texttt{A}_2)$ are part of the general class of methods, called Lagrangian methods (see for example Section 4.4.1, page 386, \cite{Ber99}). They are based on a first order method, and therefore they achieve a linear rate of convergence, while Algorithm $(\texttt{A}_3)$ is based on the method of multipliers that theoretically can reach super-linear rate of convergence. In the case of all three algorithms we assume that the stepsize $\alpha$ and the sequences of stepsizes $\{c_k\}_{k\geq 0}$ and $\{\alpha_{\tau}\}_{\tau\geq 0}$ are globally known by all agents. It can be observed that the algorithms are indeed distributed since for updating their local variables $x_{i,k}$, $\mu_{i,k}$ and $\lambda_{ij,k}$ they use only local information ($ \nabla f_i(x_{i,k})$ and  $\nabla h_i(x_{i,k})$) and information from their neighbors ($x_{j,k}$, $\lambda_{ji,k}$, and $s_{ji}$ for $j\in \mathcal{N}_i$). In the case of Algorithm $(\texttt{A}_1)$, equation (\ref{equ_02091554}) describing the minimizer estimate update is comprised of a standard gradient descent step and two additional terms used to cope with the local equality constraint and the lack of complete information. Intuitively, $\mu_{i,k}$ can be seen as the price paid by agent $i$ for satisfying the local equality constraint, while $\lambda_{i,k}$ is the price paid by the same agent for having its estimate $x_{i,k}$ far away from the estimates of its neighbors. In the case of Algorithms $(\texttt{A}_2)$, $(\texttt{A}_3)$, the iteration for computing the minimizer has two additional terms. These terms have their origin in the use of an augmented Lagrangian and ensure the local convergence to a local minimizer under weaker conditions. In the next sections we will focus on proving converge results corresponding to the three algorithms. Although these three algorithms result from applying standard methods, their convergence results are no longer standard since the local minimizers corresponding to $(\texttt{P}_1)$ are no longer regular due to the distributed setup. We would like to emphasize that unless the communication topology is assumed \emph{undirected}, the updates of the minimizer estimates can no longer be implemented in a distributed manner since the agents would require information from agents not in their neighborhoods.

\section{An equivalent optimization problem with equality constraints}
\label{sec_12011442}
In this section we define an augmented optimization problem, from whose solution we can extract the solution of problem $(\texttt{P}_1)$. As made clear in what follows, the distributed algorithms proposed in this paper, follow from applying standard techniques for solving optimization problem with equality constraints to this particular problem.

Let us define the function $\mathbf{F}:\mathds{R}^{nN}\rightarrow \mathds{R}$ given by $\mathbf{F}(\mathbf{x}) = \sum_{i=1}^Nf_i(x_i),$
where $\mathbf{x}'=(x_1',x_2',\ldots,x_N')$, with $x_i\in \mathds{R}^n$. In addition we introduce the vector-valued functions $\mathbf{h}:\mathds{R}^{nN}\rightarrow \mathds{R}^{N}$ and $\mathbf{g}:\mathds{R}^{nN}\rightarrow \mathds{R}^{nN}$, where $\mathbf{h}(\mathbf{x})=\left(\mathbf{h}_1(\mathbf{x}),
\mathbf{h}_2(\mathbf{x}),\ldots,\mathbf{h}_N(\mathbf{x})\right)',$
with $\mathbf{h}_i:\mathds{R}^{nN}\rightarrow \mathds{R}$ given by $\mathbf{h}_i(\mathbf{x})=h_i(x_i)$,
and $\mathbf{g}(\mathbf{x})' = \left(g_1(\mathbf{x})', g_2(\mathbf{x})', \ldots, g_N(\mathbf{x})'\right),$
with $g_i:\mathds{R}^{nN}\rightarrow \mathds{R}^{|\mathcal{N}_i| n}$ given by $g_i(\mathbf{x}) = (g_{ij}(\mathbf{x})),$
where $g_{ij}(\mathbf{x}) = s_{ij}(x_i-x_j),$ with $s_{ij}$ positive scalars. The vector valued function $\mathbf{g}(\mathbf{x})$ can be compactly expressed as $g(\mathbf{x}) = \mathbf{S} \mathbf{x},$ where $\mathbf{S} = S\otimes I$, with $I$ the $n$-dimensional identity matrix and $S$ defined in (\ref{equ_02112218}).
We introduce the optimization problem
{\small \begin{eqnarray}
\label{equ_15391112}
(\texttt{P}_2) & \min_{\mathbf{x}\in \mathds{R}^{nN}} & \mathbf{F}(\mathbf{x}),\\
\label{equ_15391113}
& \textmd{subject to: } & \mathbf{h}(\mathbf{x})=0,\\
\label{equ_15391114}
&  & \mathbf{g}(\mathbf{x})=\mathbf{S}\mathbf{x}=0.
\end{eqnarray}}
 The Lagrangian function of \texttt{Problem} $(\texttt{P}_2)$ is a function $\boldsymbol{\mathcal{L}}:\mathds{R}^{nN}\times \mathds{R}^{N}\times\mathds{R}^{n\bar{N}}\rightarrow \mathds{R}$, defined as
{\small \begin{equation}
\label{equ_02111445}
\boldsymbol{\mathcal{L}}
\left(\mathbf{x},\boldsymbol{\mu},\boldsymbol{\lambda}\right)\triangleq \mathbf{F}(\mathbf{x})+
\boldsymbol{\mu}'\mathbf{h}(\mathbf{x})+\boldsymbol{\lambda}'\mathbf{S}\mathbf{x}.
\end{equation}}
We define also the augmented Lagrangian of problem $(\texttt{P}_2)$:
\begin{equation}
\label{equ_02181334}
\boldsymbol{\mathcal{L}}_c(\mathbf{x},\boldsymbol{\mu},\boldsymbol{\lambda}) = \mathbf{F}(\mathbf{x})+\boldsymbol{\mu}'\mathbf{h}(\mathbf{x})+
\boldsymbol{\lambda}'\mathbf{S}\mathbf{x}+\frac{c}{2}\|\mathbf{h}(\mathbf{x})\|^2+ \frac{c}{2}\mathbf{x}'\mathbf{L}\mathbf{x},
\end{equation}
 where $\mathbf{L} = \mathbf{S}'\mathbf{S}$ is a Laplacian type of matrix and $c$ is a positive scalar.
The gradient and the Hessian of $\boldsymbol{\mathcal{L}}_c(\mathbf{x},\boldsymbol{\mu},\boldsymbol{\lambda})$ are given by
\begin{equation}
\label{equ_02181504}
\nabla_{\mathbf{x}}\boldsymbol{\mathcal{L}}_c(\mathbf{x},\boldsymbol{\mu},\boldsymbol{\lambda}) =\nabla \mathbf{F}(\mathbf{x})+
\nabla\mathbf{h}(\mathbf{x})\boldsymbol{\mu}+
\mathbf{S}'\boldsymbol{\lambda}+c\nabla\mathbf{h}(\mathbf{x})\mathbf{h}(\mathbf{x})+c \mathbf{L}\mathbf{x},
\end{equation}
and
\begin{equation}
\label{equ_02181511}
\nabla_{\mathbf{x}\mathbf{x}}^2\boldsymbol{\mathcal{L}}_c(\mathbf{x},\boldsymbol{\mu},\boldsymbol{\lambda}) = \nabla^2\mathbf{F}(\mathbf{x})+\sum_{i=1}^N\boldsymbol{\mu}_i\nabla^2\mathbf{h}_i(\mathbf{x})+c\mathbf{L}+$$
$$+c\sum_{i=1}^N\left(\mathbf{h}_i(\mathbf{x})\nabla^2\mathbf{h}_i(\mathbf{x})+ \nabla\mathbf{h}_i(\mathbf{x})\nabla\mathbf{h}_i(\mathbf{x})'\right),
\end{equation}
respectively.

The following proposition states that by solving $(\texttt{P}_2)$ we solve in fact $(\texttt{P}_1)$ as well, and vice-versa.
\begin{proposition}
\label{pro_14411114}
 Let Assumptions \ref{assu_02091524} hold. The vector $x^*$ is a local minimizer of $(\texttt{P}_1)$ if and only if  $\mathbf{x}^* = \mathds{1}\otimes x^*$ is a local minimizer of $(\texttt{P}_2)$.
\end{proposition}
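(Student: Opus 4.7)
My plan is to prove both directions by exploiting the explicit characterization of the feasible set of $(\texttt{P}_2)$. The constraint $\mathbf{g}(\mathbf{x})=\mathbf{S}\mathbf{x}=0$, together with connectedness of $\mathcal{G}$ and Proposition \ref{pro_10151123}(b), forces $\mathbf{x}\in\mathrm{Null}(\mathbf{S})=\{\mathds{1}\otimes x\mid x\in\mathds{R}^n\}$. For such $\mathbf{x}$ the second constraint $\mathbf{h}(\mathbf{x})=0$ reduces to $h_i(x)=0$ for all $i$, which is exactly $h(x)=0$, and $\mathbf{F}(\mathbf{x})=\sum_i f_i(x)=f(x)$. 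Thus the map $x\mapsto \mathds{1}\otimes x$ is a bijection between the feasible set of $(\texttt{P}_1)$ and the feasible set of $(\texttt{P}_2)$ that preserves objective values. The relation $\|\mathds{1}\otimes x-\mathds{1}\otimes x^*\|=\sqrt{N}\|x-x^*\|$ (for the Euclidean norm) will let me transfer $\epsilon$-neighborhoods back and forth.

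For the direction $(\Rightarrow)$, I assume $x^*$ is a local minimizer of $(\texttt{P}_1)$ with neighborhood radius $\epsilon>0$. Given any feasible $\mathbf{x}$ for $(\texttt{P}_2)$ with $\|\mathbf{x}-\mathbf{x}^*\|<\sqrt{N}\,\epsilon$, I use the nullspace characterization to write $\mathbf{x}=\mathds{1}\otimes x$ with $h(x)=0$ and $\|x-x^*\|<\epsilon$; applying local optimality of $x^*$ yields $\mathbf{F}(\mathbf{x})=f(x)\geq f(x^*)=\mathbf{F}(\mathbf{x}^*)$. For the direction $(\Leftarrow)$, I assume $\mathbf{x}^*=\mathds{1}\otimes x^*$ is a local minimizer of $(\texttt{P}_2)$ with radius $\epsilon>0$. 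For any $x$ feasible for $(\texttt{P}_1)$ with $\|x-x^*\|<\epsilon/\sqrt{N}$, the lifted vector $\mathbf{x}=\mathds{1}\otimes x$ satisfies $\mathbf{S}\mathbf{x}=(S\mathds{1})\otimes x=0$ and $\mathbf{h}(\mathbf{x})=0$, hence is feasible for $(\texttt{P}_2)$; since $\|\mathbf{x}-\mathbf{x}^*\|<\epsilon$, local optimality of $\mathbf{x}^*$ gives $f(x)=\mathbf{F}(\mathbf{x})\geq\mathbf{F}(\mathbf{x}^*)=f(x^*)$.

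There is no serious obstacle here; the argument is essentially bookkeeping. The only non-trivial ingredient is the use of graph connectedness (Assumption \ref{assu_02091524}(d)) through Proposition \ref{pro_10151123}(b) to identify the feasible set of $(\texttt{P}_2)$ with consensus configurations. Without connectedness, $\mathrm{Null}(\mathbf{S})$ would be larger (one consensus vector per connected component) and the lift from $(\texttt{P}_1)$ to $(\texttt{P}_2)$ would not be surjective onto the feasible set, breaking the forward direction. I will state this dependence explicitly and otherwise keep the proof compact, noting that the argument does not require Assumption \ref{assu_02091703}, since regularity of the constraints plays no role in the equivalence of minimizers.
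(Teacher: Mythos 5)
Your proposal is correct and follows essentially the same route as the paper: both rest on the fact that connectedness forces $\textmd{Null}(\mathbf{S})=\{\mathds{1}\otimes x\}$, so the feasible set of $(\texttt{P}_2)$ is exactly the lift of the feasible set of $(\texttt{P}_1)$ with matching objective values. Your version merely makes explicit the $\epsilon$-neighborhood transfer via $\|\mathds{1}\otimes x-\mathds{1}\otimes x^*\|=\sqrt{N}\,\|x-x^*\|$, which the paper leaves implicit, and you are right that Assumption \ref{assu_02091703} is not needed here.
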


\begin{IEEEproof}
Since the Laplacian $L$ corresponds to a connected graph, according to Proposition \ref{pro_10151123}-(c), the nullspace of $\mathbf{S}$ is given by $\textmd{Null}(\mathbf{S})=\{ \mathds{1}\otimes x \ |\ x \in \mathds{R}^n\}$. From the equality constraint (\ref{equ_15391114}), we get that any local minimizer $\mathbf{x}^*$ of $(\texttt{P}_2)$ must be of the form $\mathbf{x}^* = \mathds{1}\otimes x^*$, for some $x^*\in \mathds{R}^n$. Therefore, the solution of $(\texttt{P}_2)$ must be searched in the set of vectors with structure given by $\mathbf{x} = \mathds{1}\otimes x$. Applying
this constraint, the cost function (\ref{equ_15391112}) becomes
$$\mathbf{F}(\mathbf{x}) = \sum_{i=1}^Nf_i(x)=f(x),$$
and the equality constraint (\ref{equ_15391113}) becomes
$$\mathbf{h}(\mathbf{x}) = h(x)=0,$$
which shows that we have recovered the optimization problem $(\texttt{P}_1)$.
\end{IEEEproof}

\begin{remark}
We note from above the importance of having a connected communication topology. Indeed, if $\mathcal{G}$ is not connected, then the nullspace of $\mathbf{S}$ is much richer than the subspace $\{ \mathds{1}\otimes x \ |\ x \in \mathds{R}^n\}$, and therefore the solution of $(\texttt{P}_2)$ may not necessarily be of the form $\mathbf{x}^* = \mathds{1}\otimes x^*$. However, the fact that we search a solution of $(\texttt{P}_2)$ of this particular structure is \emph{fundamental} for showing the equivalence between the two optimization problems.
\end{remark}

\section{Convergence analysis of the distributed algorithms based on Lagrangian methods}
\label{sec_12011443}
In this section we study the convergence properties of the two distributed algorithms for solving problem $(P_1)$ that are based on Lagrangian methods. In particular, they are obtained by applying a first order method for solving the first-order necessary optimality conditions, where in the case of the second algorithm, the first order necessary conditions are derived in terms of the augmented Lagrangian. As we will see next, this will require weaker conditions for convergence.

\subsection{Supporting results for the convergence analysis}
This section introduces a set of results used for the convergence analysis of the algorithms. The proofs of these results together with their auxiliary results can be found in the Appendix section.

We first characterize the tangent cone at a local minimizer of $(\texttt{P}_2)$ in terms of the tangent cone at a local minimizer of $(\texttt{P}_1)$.
\begin{proposition}
\label{pro_02181835}
Let Assumptions \ref{assu_02091524}-(a) and \ref{assu_02091703} hold, let $\mathbf{x}^* = \mathds{1}\otimes x^*$ be a local minimizer of $(\texttt{P}_2)$ and let $\boldsymbol{\Omega}$ denote the constraint set, that is, $\boldsymbol{\Omega} = \{\mathbf{x}\ |\ \mathbf{h}(\mathbf{x})=0,\mathbf{S}\mathbf{x}=0\}$. Then the tangent cone to $\boldsymbol{\Omega}$ at $\mathbf{x}^*$ is given by
$\textmd{TC}(\mathbf{x}^*,\boldsymbol{\Omega}) =\textmd{Null}\left(\left[\nabla \mathbf{h}(\mathbf{x}^*),\mathbf{S}'\right]'\right)=\left\{\mathds{1}\otimes h\ |\ h\in \textmd{TC}({x}^*,{\Omega})=\textmd{Null}\left(\nabla h(x^*)'\right) \right\}.$
\end{proposition}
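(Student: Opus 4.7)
My plan is to reduce the problem to the lower-dimensional constraint set $\Omega=\{x\in\mathds{R}^n : h(x)=0\}$ via the isomorphism induced by $\textmd{Null}(\mathbf{S})$, and only at the end verify that the resulting tangent cone coincides with the stated nullspace. This avoids having to invoke a constraint qualification at $\mathbf{x}^*$ for the lifted problem — which would be delicate, because $\mathbf{S}'$ does not have linearly independent columns (its rank is only $n(N-1)$), so the constraint gradients of $(\texttt{P}_2)$ are not linearly independent at $\mathbf{x}^*$.

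First I would observe, using Proposition \ref{pro_10151123}-(b), that $\mathbf{S}\mathbf{x}=0$ forces $\mathbf{x}=\mathds{1}\otimes x$ for some $x\in\mathds{R}^n$; combined with $\mathbf{h}(\mathbf{x})=0$, which for such $\mathbf{x}$ reduces to $h(x)=0$, this yields the identification
\[
\boldsymbol{\Omega}=\{\mathds{1}\otimes x \,:\, x\in\Omega\},\qquad \Omega=\{x\in\mathds{R}^n : h(x)=0\}.
\]
The map $\Phi:\mathds{R}^n\to\textmd{Null}(\mathbf{S})$, $\Phi(x)=\mathds{1}\otimes x$, is a linear bijective isometry (up to a constant factor $\sqrt{N}$), so it preserves tangent cones. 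Using the sequential definition of the tangent cone, any sequence $\mathbf{x}_k\in\boldsymbol{\Omega}$ with $\mathbf{x}_k\to\mathbf{x}^*$ and $t_k\downarrow 0$ necessarily takes the form $\mathbf{x}_k=\mathds{1}\otimes x_k$ with $x_k\in\Omega$, $x_k\to x^*$, and $(\mathbf{x}_k-\mathbf{x}^*)/t_k=\mathds{1}\otimes (x_k-x^*)/t_k$ converges iff $(x_k-x^*)/t_k$ does. Hence
\[
\textmd{TC}(\mathbf{x}^*,\boldsymbol{\Omega})=\{\mathds{1}\otimes d \,:\, d\in\textmd{TC}(x^*,\Omega)\}.
\]

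Next I would invoke Assumption \ref{assu_02091703}: since $\nabla h(x^*)$ has full column rank, $x^*$ is a regular point of $\Omega$, and the standard result for equality-constrained smooth sets gives $\textmd{TC}(x^*,\Omega)=\textmd{Null}(\nabla h(x^*)')$. This immediately establishes the second equality in the statement.

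Finally, for the equality $\textmd{TC}(\mathbf{x}^*,\boldsymbol{\Omega})=\textmd{Null}([\nabla\mathbf{h}(\mathbf{x}^*),\mathbf{S}']')$, I would compute the nullspace explicitly. A vector $\mathbf{v}$ lies in it iff $\mathbf{S}\mathbf{v}=0$ and $\nabla\mathbf{h}(\mathbf{x}^*)'\mathbf{v}=0$. The first condition, by Proposition \ref{pro_10151123}-(b), forces $\mathbf{v}=\mathds{1}\otimes v$ for some $v\in\mathds{R}^n$. Because $\mathbf{h}_i(\mathbf{x})=h_i(x_i)$ depends only on the $i$-th block of $\mathbf{x}$, the $i$-th column of $\nabla\mathbf{h}(\mathbf{x}^*)$ is the block vector with $\nabla h_i(x^*)$ in slot $i$ and zeros elsewhere, so $\nabla\mathbf{h}(\mathbf{x}^*)'(\mathds{1}\otimes v)$ has $i$-th entry $\nabla h_i(x^*)'v$. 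Requiring this to vanish for all $i$ is equivalent to $\nabla h(x^*)'v=0$. Combining the three displays closes the chain of equalities.

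The main potential obstacle is a reader's worry that the standard equality-constraint tangent-cone formula fails for $(\texttt{P}_2)$ because regularity does not hold there; the point of going through the $\Phi$-reduction first is precisely to sidestep this and only appeal to regularity at the level of $(\texttt{P}_1)$, where Assumption \ref{assu_02091703} is exactly what is needed.
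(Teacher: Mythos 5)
Your proposal is correct, and it reaches the result by a slightly different organization than the paper, though both hinge on the same two facts: $\textmd{Null}(\mathbf{S})=\{\mathds{1}\otimes x\}$ (Proposition \ref{pro_10151123}) and the full rank of $\nabla h(x^*)$ (Assumption \ref{assu_02091703}), which lets regularity be invoked only for $(\texttt{P}_1)$. The paper splits the claim into two inclusions: it cites $\textmd{TC}(\mathbf{x}^*,\boldsymbol{\Omega})\subseteq\textmd{Null}\left(\left[\nabla\mathbf{h}(\mathbf{x}^*),\mathbf{S}'\right]'\right)$ as standard (which indeed needs no constraint qualification), and proves the reverse inclusion by hand — taking $\mathbf{u}$ in the nullspace, showing $\mathbf{u}=\mathds{1}\otimes u$ with $u\in\textmd{Null}(\nabla h(x^*)')=\textmd{TC}(x^*,\Omega)$, and then explicitly lifting the feasibility curve $x^*+tu+o(t)\in\Omega$ to $\mathbf{o}(t)=\mathds{1}\otimes o(t)$ so that $\mathbf{x}^*+t\mathbf{u}+\mathbf{o}(t)\in\boldsymbol{\Omega}$. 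You instead establish the identification $\boldsymbol{\Omega}=\Phi(\Omega)$ up front and use the fact that the linear embedding $\Phi(x)=\mathds{1}\otimes x$ is a homeomorphism onto its image, hence transports tangent cones; this yields both inclusions of $\textmd{TC}(\mathbf{x}^*,\boldsymbol{\Omega})=\{\mathds{1}\otimes d\,:\,d\in\textmd{TC}(x^*,\Omega)\}$ simultaneously, after which the nullspace computation is a separate, purely linear-algebraic step (and that step coincides with the paper's Proposition \ref{pro_02091833}-style calculation). What your route buys is a cleaner conceptual statement — the lifted tangent cone is just the image of the base one under $\Phi$, independent of which definition of tangent cone is used — and it dispenses with constructing $\mathbf{o}(t)$; what the paper's route buys is self-containedness, since it exhibits the feasible curve explicitly rather than appealing to invariance of tangent cones under linear isomorphisms. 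Both are sound.
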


Under the assumption that the matrix $\nabla h(x^*)$ is full rank, the first order necessary conditions of $(P_1)$ are given by $\nabla f(x^*)+\nabla h(x^*) \psi^*=0$, $h(x^*)=0$, where the vector $\psi^*$ is unique (see for example Proposition 3.3.1, page 255, \cite{Ber99}). An interesting question is whether or not there is a connection between $\psi^*$ and $\boldsymbol{\mu}^*$ shown in the first order necessary conditions of $(P_2)$. As shown in the following, the two vectors are in fact equal.

\begin{proposition}
\label{equ_02111222}
Let Assumptions \ref{assu_02091524} and \ref{assu_02091703} hold, let $\mathbf{x}^* = \mathds{1}\otimes x^*$ be a local minimizer of $(P_2)$ and let $\psi^*$ and $\boldsymbol{\mu}^*$ be the unique Lagrange multiplier vectors corresponding to the first order necessary conditions of $(P_1)$ and $(P_2)$, respectively. Then $\psi^* = \boldsymbol{\mu}^*$.
\end{proposition}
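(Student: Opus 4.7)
My plan is to exploit the block structure of the gradients in problem $(\texttt{P}_2)$ together with the key property $\mathds{1}'S' = 0$ of the incidence matrix, which follows from Proposition \ref{pro_10151123}-(a) (since $\mathds{1}\in\textmd{Null}(S)$).

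First I would write down the first-order necessary optimality conditions for $(\texttt{P}_2)$ at $\mathbf{x}^* = \mathds{1}\otimes x^*$,
\begin{equation*}
\nabla \mathbf{F}(\mathbf{x}^*) + \nabla \mathbf{h}(\mathbf{x}^*)\boldsymbol{\mu}^* + \mathbf{S}'\boldsymbol{\lambda}^* = 0,
\end{equation*}
and use the fact that because $\mathbf{F}(\mathbf{x}) = \sum_i f_i(x_i)$ and $\mathbf{h}_i(\mathbf{x}) = h_i(x_i)$, the gradient $\nabla \mathbf{F}(\mathbf{x}^*)$ is the stacking of the vectors $\nabla f_i(x^*)$ and $\nabla \mathbf{h}(\mathbf{x}^*)$ is block diagonal with diagonal blocks $\nabla h_i(x^*)$. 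Thus, the $i$-th block of the stationarity condition reads
\begin{equation*}
\nabla f_i(x^*) + \mu_i^*\,\nabla h_i(x^*) + [\mathbf{S}']_i\,\boldsymbol{\lambda}^* = 0, \qquad i = 1,\ldots,N,
\end{equation*}
where $[\mathbf{S}']_i$ denotes the $i$-th block row of $\mathbf{S}'$ of size $n\times n\bar{N}$.

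Next, I would sum these $N$ block equations. The sum of the block rows of $\mathbf{S}'$ can be written as $(\mathds{1}'\otimes I)\mathbf{S}' = (\mathds{1}'S')\otimes I$, and since $\mathds{1}\in\textmd{Null}(S)$ by Proposition \ref{pro_10151123}-(a), we have $\mathds{1}'S' = 0$, so the contribution of $\boldsymbol{\lambda}^*$ vanishes completely. What remains is
\begin{equation*}
\sum_{i=1}^N \nabla f_i(x^*) + \sum_{i=1}^N \mu_i^*\,\nabla h_i(x^*) = \nabla f(x^*) + \nabla h(x^*)\,\boldsymbol{\mu}^* = 0.
\end{equation*}

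Finally, I would compare this with the first-order necessary condition for $(\texttt{P}_1)$, namely $\nabla f(x^*) + \nabla h(x^*)\psi^* = 0$. Subtracting the two identities yields $\nabla h(x^*)(\psi^* - \boldsymbol{\mu}^*) = 0$, and Assumption \ref{assu_02091703} (full column rank of $\nabla h(x^*)$) immediately gives $\psi^* = \boldsymbol{\mu}^*$. As a by-product, this argument also explains why $\boldsymbol{\mu}^*$ is uniquely determined: even though the regularity of the full constraint Jacobian $[\nabla\mathbf{h}(\mathbf{x}^*),\mathbf{S}']$ may fail (the lifted minimizer is not regular, as emphasized in the introduction), projecting the stationarity condition along $\mathds{1}\otimes I$ kills $\boldsymbol{\lambda}^*$ and leaves a system for $\boldsymbol{\mu}^*$ that is uniquely solvable by Assumption \ref{assu_02091703}. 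The only nontrivial step is the cancellation of the $\mathbf{S}'\boldsymbol{\lambda}^*$ term under summation, and this is a one-line Kronecker computation once the incidence structure of $S$ is recalled; I do not anticipate a genuine obstacle.
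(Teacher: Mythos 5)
Your proposal is correct and follows essentially the same route as the paper: write the stationarity condition of $(\texttt{P}_2)$ blockwise, sum over $i$ so that the multiplier terms associated with the consensus constraint cancel (the paper cancels the pairwise terms $s_{ij}\lambda_{ij}^*-s_{ji}\lambda_{ji}^*$ explicitly, which is exactly your Kronecker identity $(\mathds{1}'S')\otimes I=0$), recover the first-order condition of $(\texttt{P}_1)$, and conclude by uniqueness of $\psi^*$ under Assumption \ref{assu_02091703}. No gap.
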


Using augmented Lagrangian for improving convergence was first study in \cite{Hestenes1969}. The basic idea is that under some assumptions on the standard Lagrangian function, the augmented Lagrangian can be made positive definite positive definite and therefore invertible when choosing a scalar $c$ large enough. The next proposition states that this property holds in our setup as well.
\begin{proposition}
\label{pro_03051237}
Let $(\mathbf{x}^*,\boldsymbol{\mu}^*,\boldsymbol{\lambda}^*)$ be a local minimizer-Lagrange multipliers pair of $(\texttt{P}_2)$ and assume that $\mathbf{z}'\nabla_{\mathbf{x}\mathbf{x}}^2\boldsymbol{\mathcal{L}}_0(\mathbf{x}^*,\boldsymbol{\mu}^*,\boldsymbol{\lambda}^*)\mathbf{z}>0$ for all $\mathbf{z}\in \textmd{TC}(\mathbf{x}^*,\boldsymbol{\Omega})$. Then there exists a positive scalar $\bar{c}$, such that $\nabla_{\mathbf{x}\mathbf{x}}^2\boldsymbol{\mathcal{L}}_c(\mathbf{x}^*,\boldsymbol{\mu}^*,\boldsymbol{\lambda}^*)\succ 0$ for all $c\geq \bar{c}$.
\end{proposition}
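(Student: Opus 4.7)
The plan is to use the standard Hestenes--Bertsekas contradiction argument, adapted to our setting where the constraint Jacobian is not full rank because both $\nabla \mathbf{h}(\mathbf{x}^*)$ and $\mathbf{S}'$ contribute to the constraints. First I would simplify the Hessian at $(\mathbf{x}^*,\boldsymbol{\mu}^*,\boldsymbol{\lambda}^*)$ using the feasibility condition $\mathbf{h}(\mathbf{x}^*)=0$, so that expression (\ref{equ_02181511}) collapses to
\begin{equation*}
\nabla_{\mathbf{x}\mathbf{x}}^2\boldsymbol{\mathcal{L}}_c(\mathbf{x}^*,\boldsymbol{\mu}^*,\boldsymbol{\lambda}^*)=\nabla_{\mathbf{x}\mathbf{x}}^2\boldsymbol{\mathcal{L}}_0(\mathbf{x}^*,\boldsymbol{\mu}^*,\boldsymbol{\lambda}^*)+c\,\mathbf{M},
\end{equation*}
where $\mathbf{M}\triangleq \mathbf{L}+\nabla\mathbf{h}(\mathbf{x}^*)\nabla\mathbf{h}(\mathbf{x}^*)'$. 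Since $\mathbf{L}=\mathbf{S}'\mathbf{S}$, the matrix $\mathbf{M}$ is positive semidefinite, and a vector $\mathbf{z}$ lies in $\textmd{Null}(\mathbf{M})$ iff $\mathbf{S}\mathbf{z}=0$ and $\nabla\mathbf{h}(\mathbf{x}^*)'\mathbf{z}=0$. By Proposition \ref{pro_02181835} this nullspace is precisely $\textmd{TC}(\mathbf{x}^*,\boldsymbol{\Omega})$.

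Next I would argue by contradiction. Suppose no such $\bar{c}$ exists; then for every positive integer $k$ there is a unit vector $\mathbf{z}_k$ with
\begin{equation*}
\mathbf{z}_k'\nabla_{\mathbf{x}\mathbf{x}}^2\boldsymbol{\mathcal{L}}_0(\mathbf{x}^*,\boldsymbol{\mu}^*,\boldsymbol{\lambda}^*)\mathbf{z}_k+k\,\mathbf{z}_k'\mathbf{M}\mathbf{z}_k\leq 0.
\end{equation*}
By compactness of the unit sphere, a subsequence of $\{\mathbf{z}_k\}$ converges to some $\mathbf{z}^*$ with $\|\mathbf{z}^*\|=1$. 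The first quadratic form is bounded along the sequence, so rearranging yields $\mathbf{z}_k'\mathbf{M}\mathbf{z}_k\leq C/k$ for some constant $C$, hence $\mathbf{z}^{*'}\mathbf{M}\mathbf{z}^*=0$. Because $\mathbf{M}\succeq 0$, this forces $\mathbf{z}^*\in \textmd{Null}(\mathbf{M})=\textmd{TC}(\mathbf{x}^*,\boldsymbol{\Omega})$. Passing to the limit in the inequality (and dropping the nonnegative $k\,\mathbf{z}_k'\mathbf{M}\mathbf{z}_k$ term on the left) gives
\begin{equation*}
\mathbf{z}^{*'}\nabla_{\mathbf{x}\mathbf{x}}^2\boldsymbol{\mathcal{L}}_0(\mathbf{x}^*,\boldsymbol{\mu}^*,\boldsymbol{\lambda}^*)\mathbf{z}^*\leq 0,
\end{equation*}
which contradicts the positive-definiteness hypothesis on the tangent cone and completes the proof.

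The main technical point that deserves care is the identification of $\textmd{Null}(\mathbf{M})$ with $\textmd{TC}(\mathbf{x}^*,\boldsymbol{\Omega})$; this is exactly what Proposition \ref{pro_02181835} provides, after noting $\textmd{Null}(\mathbf{L})=\textmd{Null}(\mathbf{S})$ from Proposition \ref{pro_10151123}. Everything else is a routine limiting argument, and no regularity of the combined Jacobian $[\nabla\mathbf{h}(\mathbf{x}^*),\mathbf{S}']$ is needed, which is fortunate since that matrix is never full rank in our distributed formulation.
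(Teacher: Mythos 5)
Your proof is correct and follows essentially the same route as the paper: decompose $\nabla_{\mathbf{x}\mathbf{x}}^2\boldsymbol{\mathcal{L}}_c$ at the minimizer as $\nabla_{\mathbf{x}\mathbf{x}}^2\boldsymbol{\mathcal{L}}_0$ plus $c$ times the positive semidefinite matrix $\nabla\mathbf{h}(\mathbf{x}^*)\nabla\mathbf{h}(\mathbf{x}^*)'+\mathbf{S}'\mathbf{S}$, and identify that matrix's nullspace with $\textmd{TC}(\mathbf{x}^*,\boldsymbol{\Omega})$ via Proposition \ref{pro_02181835}. The only difference is that where the paper cites Lemma 1.25 of \cite{Bertsekas1982} to finish, you spell out that lemma's compactness--contradiction argument explicitly, and you do so correctly.
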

\begin{IEEEproof}
 We recall that the the Hessian $\nabla_{\mathbf{x}\mathbf{x}}^2\boldsymbol{\mathcal{L}}_c(\mathbf{x}^*,\boldsymbol{\mu}^*,\boldsymbol{\lambda}^*)$ is given by
$$\nabla_{\mathbf{x}\mathbf{x}}^2\boldsymbol{\mathcal{L}}_c(\mathbf{x}^*,\boldsymbol{\mu}^*,\boldsymbol{\lambda}^*) = \nabla_{\mathbf{x}\mathbf{x}}^2\boldsymbol{\mathcal{L}}_0(\mathbf{x}^*,\boldsymbol{\mu}^*,\boldsymbol{\lambda}^*)+c \nabla \mathbf{h}(\mathbf{x}^*)\nabla \mathbf{h}(\mathbf{x}^*)'+c\mathbf{S}'\mathbf{S}.$$
We have that $\mathbf{z}'\left[\nabla \mathbf{h}(\mathbf{x}^*)\nabla \mathbf{h}(\mathbf{x}^*)'+\mathbf{S}'\mathbf{S}\right]\mathbf{z}=0$ if and only if $z\in \textmd{Null}\left(\left[\nabla \mathbf{h}(\mathbf{x}^*),\mathbf{S}'\right]'\right)$. By Proposition \ref{pro_02181835} we have $\textmd{Null}\left(\left[\nabla \mathbf{h}(\mathbf{x}^*),\mathbf{S}'\right]'\right) =\textmd{TC}\left(\mathbf{x}^*,\boldsymbol{\Omega}\right)$. Finally, using Lemma 1.25, page 68 of \cite{Bertsekas1982}, there exists $\bar{c}>0$ so that $\nabla_{\mathbf{x}\mathbf{x}}^2\boldsymbol{\mathcal{L}}_c(\mathbf{x}^*,\boldsymbol{\mu}^*,\boldsymbol{\lambda}^*)\succ 0$, for all $c\geq \bar{c}$.
\end{IEEEproof}

\subsection{Lagrangian methods  - Algorithm $(A_1)$}

To find a solution of problem $(P_2)$ the first thing we can think of is solving the set of necessary conditions:
\begin{eqnarray}
\label{equ_18301114}
\nabla\mathbf{F}(\mathbf{x})+\mathbf{S}' \boldsymbol{\lambda}+ \nabla \mathbf{h}(\mathbf{x})\boldsymbol{\mu} &=& 0,\\
\label{equ_18311113}
\mathbf{h}(\mathbf{x})&=&0,\\
\label{equ_18311114}
\mathbf{S}\mathbf{x} &=& 0.
\end{eqnarray}
Solving (\ref{equ_18301114})-(\ref{equ_18311114}) does not guarantee finding a local minimizer, but at least the local minimizers are among the solutions of the above nonlinear system of equations.
An approach for solving (\ref{equ_18301114})-(\ref{equ_18311114}) consists of using a first order method (see for instance Section 4.4.1, page 386, \cite{Ber99}), which is given by
\begin{eqnarray}
\label{equ_18331114}
\mathbf{x}_{k+1} &=& \mathbf{x}_k - \alpha \left[\nabla\mathbf{F}(\mathbf{x}_k)+\nabla \mathbf{h}(\mathbf{x}_k)\boldsymbol{\mu}_k+\mathbf{S}' \boldsymbol{\lambda}_k\right],\\
\label{equ_18341113}
\boldsymbol{\mu}_{k+1}&=& \boldsymbol{\mu}_k+ \alpha \mathbf{h}(\mathbf{x}_k),\\
\label{equ_18341114}
\boldsymbol{\lambda}_{k+1}&=& \boldsymbol{\lambda}_k+ \alpha \mathbf{S}\mathbf{x}_k,
\end{eqnarray}
where $\alpha>0$ is chosen to ensure the stability of the algorithm. By reformulating the above iteration in terms of the $n$-dimensional components of the vectors $\mathbf{x}_k$ and $\boldsymbol{\lambda}_k$, and in terms of the scalar components of the vector $\boldsymbol{\mu}_k$, we obtain Algorithm $(\texttt{A}_1)$.

Since the matrix $\mathbf{S}$ is not full rank, we cannot apply directly existing results for regular (local) minimizers, such as Proposition 4.4.2, page 388, \cite{Ber99}. Still, for a local minimizer and Lagrange multipliers pair $(\mathbf{x}^*,\boldsymbol{\mu}^*,\boldsymbol{\lambda}^*)$, with $\boldsymbol{\lambda}^*\in \textmd{Range}(\mathbf{S})$, we show that if the initial values $\left(\mathbf{x}_0,\boldsymbol{\mu}_0,(\mathbf{I}-\mathbf{J})\boldsymbol{\lambda}_0\right)$ are close enough to  $(\mathbf{x}^*,\boldsymbol{\mu}^*,\boldsymbol{\lambda}^*)$, for a small enough step-size and under some conditions on (the Hessians of) the  functions $f_i(x)$ and $h_i(x)$, $i=1,\ldots,N$, the vectors $\mathbf{x}_k$ and $\boldsymbol{\mu}_k$ do indeed converge to $\mathbf{x}^*$ and $\boldsymbol{\mu}^*$, respectively.
However, although under the same conditions $\boldsymbol{\lambda}_k$ does converge, it cannot be guaranteed that it converges to the unique $\boldsymbol{\lambda}^*\in \textmd{Range}(\mathbf{S})$ but rather to a point in the set $\left\{\boldsymbol{\lambda}^*+\textmd{Null}\left(\mathbf{S}'\right)\right\}$.

The following theorem (whose proof can be found in the Appendix section)  addresses the local convergence properties of Algorithm $(A_1)$. It states that, under some assumptions on the functions $f_i(x)$ and $h_i(x)$, and provided  the initial values are close enough to a solution of the first order necessary conditions of $(P_2)$, and a small enough step-size $\alpha$ is used, the sequence  $\left\{\mathbf{x}_k,\boldsymbol{\mu}_k,\boldsymbol{\lambda}_k\right\}$ converges to the respective solution.

\begin{theorem}
\label{thm_17401116}
Let Assumptions \ref{assu_02091524} and \ref{assu_02091703} hold and let $\left(\mathbf{x}^*,\boldsymbol{\mu}^*,\boldsymbol{\lambda}^*\right)$ with $\boldsymbol{\lambda}^*\in \textmd{Range}(\mathbf{S})$, be a local minimizer-Lagrange multipliers pair of $(P_2)$. Assume also that  $\nabla_{\mathbf{x}\mathbf{x}}^2\boldsymbol{\mathcal{L}}
\left(\mathbf{x}^*,\boldsymbol{\mu}^*,\boldsymbol{\lambda}^*\right)$ is positive definite. Then there exists $\bar{\alpha}$, such that for all $\alpha \in (0,\bar{\alpha}]$, the set $\left(\mathbf{x}^*, \boldsymbol{\mu}^*,\boldsymbol{\lambda}^*+\textmd{Null}\left(\mathbf{S}'\right) \right)$ is an attractor of iteration (\ref{equ_18331114})-(\ref{equ_18341114}) and if the sequence $\left\{\mathbf{x}_k,\boldsymbol{\mu}_k,\boldsymbol{\lambda}_k\right\}$ converges to the set $\left(\mathbf{x}^*, \boldsymbol{\mu}^*,\boldsymbol{\lambda}^*+\textmd{Null}\left(\mathbf{S}'\right) \right)$, the rate of convergence of $\|\mathbf{x}_k-\mathbf{x}^*\|$, $\|\boldsymbol{\mu}_k-\boldsymbol{\mu}^*\|$ and $\left\|\boldsymbol{\lambda}_k-\left[\boldsymbol{\lambda}^*+\textmd{Null}\left(\mathbf{S}'\right)\right]\right\|$
is linear.
\end{theorem}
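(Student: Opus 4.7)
My plan is to exploit the fact that iteration (\ref{equ_18331114})--(\ref{equ_18341114}) decouples along the rank-deficient direction $\textmd{Null}(\mathbf{S}')$ of the multiplier $\boldsymbol{\lambda}$, and to apply the standard first-order Lagrangian convergence result (Proposition 4.4.2 of \cite{Ber99}) to the surviving reduced iteration. Writing $\boldsymbol{\lambda}_k = \boldsymbol{\lambda}_{\parallel,k} + \boldsymbol{\lambda}_{\perp,k}$ with $\boldsymbol{\lambda}_{\perp,k}=\mathbf{J}\boldsymbol{\lambda}_k \in \textmd{Null}(\mathbf{S}')$ and $\boldsymbol{\lambda}_{\parallel,k} = (\mathbf{I}-\mathbf{J})\boldsymbol{\lambda}_k \in \textmd{Range}(\mathbf{S})$, I first observe that $\mathbf{J}\mathbf{S}=0$ (since $\textmd{Range}(\mathbf{S})\perp\textmd{Null}(\mathbf{S}')$), so $\boldsymbol{\lambda}_{\perp,k}=\boldsymbol{\lambda}_{\perp,0}$ for all $k$. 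Moreover $\mathbf{S}'\boldsymbol{\lambda}_k=\mathbf{S}'\boldsymbol{\lambda}_{\parallel,k}$, so the triple $(\mathbf{x}_k,\boldsymbol{\mu}_k,\boldsymbol{\lambda}_{\parallel,k})$ evolves according to a self-contained iteration whose equilibrium is $(\mathbf{x}^*,\boldsymbol{\mu}^*,\boldsymbol{\lambda}^*)$ (recall $\boldsymbol{\lambda}^* \in \textmd{Range}(\mathbf{S})$ by hypothesis, so $\boldsymbol{\lambda}^*_\perp=0$).

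Linearizing this reduced iteration at the equilibrium yields the Jacobian
$$M_\alpha \;=\; I - \alpha \begin{pmatrix} H^* & \nabla \mathbf{h}^* & \tilde{\mathbf{S}}' \\ -(\nabla \mathbf{h}^*)' & 0 & 0 \\ -\tilde{\mathbf{S}} & 0 & 0 \end{pmatrix},$$
where $H^*=\nabla^2_{\mathbf{xx}}\boldsymbol{\mathcal{L}}(\mathbf{x}^*,\boldsymbol{\mu}^*,\boldsymbol{\lambda}^*)$, $\nabla \mathbf{h}^*=\nabla \mathbf{h}(\mathbf{x}^*)$, and $\tilde{\mathbf{S}}$ denotes the restriction of $\mathbf{S}$ to the $\textmd{Range}(\mathbf{S}')$-component of its codomain. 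To invoke the standard convergence result I need: (i) $H^*\succ 0$, which is in the hypotheses; and (ii) the combined constraint Jacobian $[\nabla \mathbf{h}^*,\tilde{\mathbf{S}}']$ has full column rank. For (ii), suppose $\nabla\mathbf{h}^*\mu+\mathbf{S}'\lambda=0$ with $\lambda\in\textmd{Range}(\mathbf{S})$; projecting onto $\textmd{Null}(\mathbf{S})=\{\mathds{1}\otimes v \mid v\in\mathds{R}^n\}$ (Proposition \ref{pro_10151123}(b)) gives $\sum_i\mu_i\nabla h_i(x^*)=\nabla h(x^*)\mu=0$, so Assumption \ref{assu_02091703} forces $\mu=0$, and consequently $\lambda\in\textmd{Null}(\mathbf{S}')\cap\textmd{Range}(\mathbf{S})=\{0\}$.

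With (i) and (ii) established, the standard saddle-point argument (Proposition 4.4.2, p.~388 of \cite{Ber99}) yields a $\bar\alpha>0$ such that for every $\alpha\in(0,\bar\alpha]$ the spectral radius of $M_\alpha$ is strictly less than one; hence the reduced iteration converges linearly to $(\mathbf{x}^*,\boldsymbol{\mu}^*,\boldsymbol{\lambda}^*)$ from any initial condition in a sufficiently small neighborhood. Combining this with the invariance $\boldsymbol{\lambda}_{\perp,k}=\boldsymbol{\lambda}_{\perp,0}$ gives $\boldsymbol{\lambda}_k\to\boldsymbol{\lambda}^*+\boldsymbol{\lambda}_{\perp,0}\in\boldsymbol{\lambda}^*+\textmd{Null}(\mathbf{S}')$, which proves both the attractor property and the linear rate (the distance from the initial point to the attracting set equals $(\|\mathbf{x}_0-\mathbf{x}^*\|^2+\|\boldsymbol{\mu}_0-\boldsymbol{\mu}^*\|^2+\|\boldsymbol{\lambda}_{\parallel,0}-\boldsymbol{\lambda}^*\|^2)^{1/2}$, so smallness of this distance is exactly the required neighborhood condition on the reduced variables). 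The main obstacle is step (ii): the rank deficiency of $\mathbf{S}$ is precisely why the standard regular-minimizer theory fails on the un-decomposed iteration, and verifying full rank on the restricted space is the cleanest way to recover regularity. The argument leans crucially on both Assumption \ref{assu_02091703} and the connectedness of the graph through the characterization of $\textmd{Null}(\mathbf{S})$.
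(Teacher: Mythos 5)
Your proof is correct and reaches the paper's conclusion, but it is organized differently. The paper performs the same splitting of $\boldsymbol{\lambda}$ along $\textmd{Range}(\mathbf{S})\oplus\textmd{Null}(\mathbf{S}')$ --- its change of variable $\tilde{\boldsymbol{\lambda}}=(\mathbf{I}-\mathbf{J})\boldsymbol{\lambda}$ is exactly your $\boldsymbol{\lambda}_{\parallel}$ --- but it keeps the multiplier in the full ambient space $\mathds{R}^{n\bar{N}}$, linearizes the transformed map to get $\mathbf{I}-\alpha\mathbf{B}$ with a lower-right block $\frac{1}{\alpha}\mathbf{J}$, proves from scratch (Lemma \ref{lem_17151116}) that the eigenvalues of $\mathbf{B}$ have positive real part, and closes with a contraction-mapping argument modeled on Proposition 4.4.1 of \cite{Ber99}. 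You instead observe that $\mathbf{J}\boldsymbol{\lambda}_k$ is exactly invariant under the iteration, restrict the $\boldsymbol{\lambda}$-dynamics to $\textmd{Range}(\mathbf{S})$, and note that on that reduced space the minimizer becomes \emph{regular}, so the off-the-shelf Proposition 4.4.2 of \cite{Ber99} applies; your full-column-rank verification is precisely the paper's Proposition \ref{pro_02091833}. What your route buys is economy --- no need to re-prove the spectral lemma for the nonstandard matrix containing the $\frac{1}{\alpha}\mathbf{J}$ block. What it costs is a little care in making the citation literal: to match the hypotheses of Proposition 4.4.2 you should fix an orthonormal basis $V$ of $\textmd{Range}(\mathbf{S})$, rewrite the reduced constraint as $V'\mathbf{S}\mathbf{x}=0$ with multiplier $w_k=V'\boldsymbol{\lambda}_k$, and check that $\left[\nabla\mathbf{h}(\mathbf{x}^*),\mathbf{S}'V\right]$ has full column rank (which your step (ii) gives). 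Also note a slip of terminology: the component of the codomain you restrict to is $\textmd{Range}(\mathbf{S})=\textmd{Null}(\mathbf{S}')^{\perp}\subset\mathds{R}^{n\bar{N}}$, not $\textmd{Range}(\mathbf{S}')$, which lives in the domain $\mathds{R}^{nN}$. These are presentational rather than mathematical gaps; the argument itself is sound.
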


Let us know reformulate the above theorem so that the local convergence result can be applied to problem $(P_1)$.

\begin{corollary}
\label{cor_02111822}
Let Assumptions \ref{assu_02091524} and \ref{assu_02091703} hold and let $\left({x}^*,{\psi}^*\right)$  be a local minimizer-Lagrange multiplier pair of $(P_1)$. Assume also that  $\nabla^2f_i(x^*)+\psi_i^*\nabla^2h_i(x^*)$ are positive definite for all $i=1,\ldots,N$. Then there exists $\bar{\alpha}$, such that for all $\alpha \in (0,\bar{\alpha}]$, $\left({x}^*, {\psi}^*\right)$ is a point of attraction for  iteration (\ref{equ_02091554}) and (\ref{equ_02091555}), for all $i=1,\ldots,N$, and if the sequence $\left\{x_{i,k},\mu_{i,k}\right\}$ converges to $\left({x}^*,{\psi}^*\right)$, then the rate of convergence of $\|x_{i,k}-x^*\|$ and $\|\mu_{i,k}-\psi^*\|$ is linear.
\end{corollary}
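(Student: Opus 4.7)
The overall approach is to derive Corollary \ref{cor_02111822} as a direct specialization of Theorem \ref{thm_17401116} applied to the lifted problem $(\texttt{P}_2)$, using the equivalence results of Section \ref{sec_12011442} to pass between $(\texttt{P}_1)$ and $(\texttt{P}_2)$. First I would invoke Proposition \ref{pro_14411114} to lift the local minimizer $x^*$ of $(\texttt{P}_1)$ to the local minimizer $\mathbf{x}^* = \mathds{1}\otimes x^*$ of $(\texttt{P}_2)$. The first-order necessary conditions of $(\texttt{P}_2)$ then supply Lagrange multipliers $(\boldsymbol{\mu}^*,\boldsymbol{\lambda}^*)$ for this pair, and Proposition \ref{equ_02111222} identifies $\boldsymbol{\mu}^* = \psi^*$ componentwise, so in particular $\mu_i^* = \psi_i^*$.

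Next I would verify the two hypotheses of Theorem \ref{thm_17401116}. For the requirement $\boldsymbol{\lambda}^* \in \textmd{Range}(\mathbf{S})$, I orthogonally decompose any multiplier as $\boldsymbol{\lambda}^* = \boldsymbol{\lambda}^*_R + \boldsymbol{\lambda}^*_N$ with $\boldsymbol{\lambda}^*_R \in \textmd{Range}(\mathbf{S})$ and $\boldsymbol{\lambda}^*_N \in \textmd{Null}(\mathbf{S}')$; since $\mathbf{S}'\boldsymbol{\lambda}^*_N=0$, the stationarity equation $\nabla\mathbf{F}(\mathbf{x}^*)+\nabla\mathbf{h}(\mathbf{x}^*)\boldsymbol{\mu}^*+\mathbf{S}'\boldsymbol{\lambda}^*=0$ is preserved by $\boldsymbol{\lambda}^*_R$, so the Range component can always be used as the multiplier. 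For the positive definiteness of $\nabla_{\mathbf{x}\mathbf{x}}^2 \boldsymbol{\mathcal{L}}(\mathbf{x}^*,\boldsymbol{\mu}^*,\boldsymbol{\lambda}^*)$, I would exploit the separable structure of $(\texttt{P}_2)$: because $\mathbf{F}(\mathbf{x}) = \sum_i f_i(x_i)$ and each $\mathbf{h}_i(\mathbf{x})=h_i(x_i)$ depends only on the $i$-th block of $\mathbf{x}$, the Lagrangian Hessian is block diagonal, with $i$-th diagonal block $\nabla^2 f_i(x^*)+\psi_i^*\nabla^2 h_i(x^*)$. By the corollary's hypothesis every such block is positive definite, and block-diagonality then yields $\nabla_{\mathbf{x}\mathbf{x}}^2 \boldsymbol{\mathcal{L}}(\mathbf{x}^*,\boldsymbol{\mu}^*,\boldsymbol{\lambda}^*) \succ 0$.

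With both hypotheses verified, Theorem \ref{thm_17401116} produces $\bar{\alpha}>0$ such that for every $\alpha \in (0,\bar{\alpha}]$ the set $\{\mathbf{x}^*\}\times\{\boldsymbol{\mu}^*\}\times(\boldsymbol{\lambda}^*+\textmd{Null}(\mathbf{S}'))$ is an attractor of the centralized recursion (\ref{equ_18331114})--(\ref{equ_18341114}), with linear rate on $\|\mathbf{x}_k-\mathbf{x}^*\|$ and $\|\boldsymbol{\mu}_k-\boldsymbol{\mu}^*\|$. The final step is simply bookkeeping: as noted in Section \ref{sec_12011441}, the blockwise rewrite of that recursion in terms of its $n$-dimensional and scalar components is precisely iterations (\ref{equ_02091554})--(\ref{equ_02091556}) of Algorithm $(\texttt{A}_1)$, so $\mathbf{x}_k \to \mathds{1}\otimes x^*$ is the same as $x_{i,k}\to x^*$ for every $i$, and $\boldsymbol{\mu}_k \to \psi^*$ is the same as $\mu_{i,k}\to \psi_i^*$; the linear rates pass unchanged to the components. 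I do not anticipate any real obstacle; the only point needing a line of care is the projection of $\boldsymbol{\lambda}^*$ onto $\textmd{Range}(\mathbf{S})$ so that Theorem \ref{thm_17401116} can be cited verbatim, and the recognition that the componentwise assumption on $\nabla^2 f_i(x^*)+\psi_i^*\nabla^2 h_i(x^*)$ is exactly what is needed thanks to the block-diagonal structure of the Lagrangian Hessian of $(\texttt{P}_2)$.
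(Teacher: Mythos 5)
Your proposal is correct and follows essentially the same route as the paper's own proof: lift the minimizer via Proposition \ref{pro_14411114}, identify $\boldsymbol{\mu}^*=\psi^*$ via Proposition \ref{equ_02111222}, use the block-diagonal structure of $\nabla_{\mathbf{x}\mathbf{x}}^2\boldsymbol{\mathcal{L}}$ to get positive definiteness from the componentwise assumption, and invoke Theorem \ref{thm_17401116}. The only difference is that you spell out the orthogonal decomposition justifying $\boldsymbol{\lambda}^*\in \textmd{Range}(\mathbf{S})$, which the paper delegates to Proposition \ref{lem_18011114} in the Appendix.
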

\begin{proof}
By Proposition \ref{pro_14411114} we have that $\mathbf{x}^* = \mathds{1}\otimes x^*$ is a local minimizer of $(P_2)$ with corresponding Lagrange multipliers $\left(\boldsymbol{\mu}^*,\boldsymbol{\lambda}^*+
\textmd{Null}\left(\mathbf{S}'\right)\right)$, with $\boldsymbol{\lambda}^* \in \textmd{Range}\left(\mathbf{S}\right)$. In addition, by Proposition \ref{equ_02111222} we have that $\boldsymbol{\mu}^*=\psi^*$.
Using the definition of the Lagrangian function introduced in (\ref{equ_02111445}), we discover that
$$\nabla_{\mathbf{x}\mathbf{x}}^2\boldsymbol{\mathcal{L}}
\left(\mathbf{x}^*,\boldsymbol{\mu}^*,\boldsymbol{\lambda}^*\right)=
\textmd{diag}\left(\nabla^2f_i(x^*)+\psi_i^*\nabla^2h_i(x^*),i=1,\ldots,N\right).$$
But since we assumed that $\nabla^2f_i(x^*)+\psi_i^*\nabla^2h_i(x^*)\succ 0$ for all $i$, it follows that $\nabla_{\mathbf{x}\mathbf{x}}^2\boldsymbol{\mathcal{L}}
\left(\mathbf{x}^*,\boldsymbol{\mu}^*,\boldsymbol{\lambda}^*\right)\succ 0$ as well. Using Theorem \ref{thm_17401116}, the result follows.
\end{proof}

\begin{remark}
\label{rem_02111900}
In the previous corollary the matrices $\nabla^2f_i(x^*)+\psi_i^*\nabla^2h_i(x^*)$ were assumed to be positive definite for all $i=1,\ldots,N$. If we apply directly on $(P_1)$ results from the optimization literature (for instance Proposition 4.4.2, page 388, \cite{Ber99}) concerning convergence of  first-order methods used to compute local minimizers and their corresponding Lagrange multipliers, we only require $\sum_{i=1}^N\nabla^2f_i(x^*)+\psi_i^*\nabla^2h_i(x^*)$ to be positive definite, and not each element of the sum. Obviously the assumption in Corollary \ref{cor_02111822} does imply the latter, but is not necessary.$\Box$ 
\end{remark}

\subsection{Lagrangian methods  - Algorithm $(A_2)$}

In Theorem \ref{thm_17401116} we made the assumption that $\nabla_{\mathbf{x}\mathbf{x}}^2\boldsymbol{\mathcal{L}}
\left(\mathbf{x}^*,\boldsymbol{\mu}^*,\boldsymbol{\lambda}^*\right)$ is positive definite. We can relax this assumption by using an augmented version of the Lagrangian, obtaining an equivalent set of first order necessary conditions and applying again a first order numerical method to solve for the optimal solution.


The first order necessary conditions for $(P_2)$ with respect to the augmented Lagrangian $\boldsymbol{\mathcal{L}}_c(\mathbf{x},\boldsymbol{\mu},\boldsymbol{\lambda})$ are given by
\begin{eqnarray}
\label{equ_02181520}
\nabla \mathbf{F}(\mathbf{x})+
\nabla\mathbf{h}(\mathbf{x})\boldsymbol{\mu}+\mathbf{S}'\boldsymbol{\lambda}+
c\nabla\mathbf{h}(\mathbf{x})\mathbf{h}(\mathbf{x})+c\mathbf{L}\mathbf{x}
=0,\\
\label{equ_02181521}
\nabla \mathbf{h}(\mathbf{x})=0,\\
\label{equ_02181522}
\mathbf{S}\mathbf{x}=0.
\end{eqnarray}
A first order numerical method  that can be used  to solve the necessary conditions (\ref{equ_02181520})-(\ref{equ_02181522}) takes the form
\begin{eqnarray}
\nonumber
\mathbf{x}_{k+1} &=& \mathbf{x}_{k} - \alpha \left[\nabla \mathbf{F}(\mathbf{x}_k)+\nabla\mathbf{h}(\mathbf{x}_k)\boldsymbol{\mu}_k+\right.\\
\label{equ_02181523}
&+&\left.c\nabla\mathbf{h}(\mathbf{x}_k)\mathbf{h}(\mathbf{x}_k)+
\mathbf{S}'\boldsymbol{\lambda}_k+c\mathbf{L}\mathbf{x}_k\right],\\
\label{equ_02181524}
\boldsymbol{\mu}_{k+1}&=&\boldsymbol{\mu}_k+\alpha \mathbf{h}(\mathbf{x}_k),\\
\label{equ_02181525}
\boldsymbol{\lambda}_{k+1} &=& \boldsymbol{\lambda}_k+\alpha\mathbf{S}\mathbf{x}_k.
\end{eqnarray}
Re-writing the above iterations in terms of the components corresponding to each agent, we recover Algorithm $(\texttt{A}_2)$.

The following result addresses the local convergence properties of the iteration (\ref{equ_02181523})-(\ref{equ_02181525}). The proof of this result can be found in the Appendix section.
\begin{theorem}
\label{thm_02181546}
Let Assumptions \ref{assu_02091524} and \ref{assu_02091703} hold and let $\left(\mathbf{x}^*,\boldsymbol{\mu}^*,\boldsymbol{\lambda}^*\right)$ with $\boldsymbol{\lambda}^*\in \textmd{Range}(\mathbf{S})$, be a local minimizer-Lagrange multipliers pair of $(P_2)$. Assume also that  $\mathbf{x}'\nabla_{\mathbf{x}\mathbf{x}}^2\boldsymbol{\mathcal{L}}
\left(\mathbf{x}^*,\boldsymbol{\mu}^*,\boldsymbol{\lambda}^*\right)\mathbf{x}>0$ for all $\mathbf{x}\in \textmd{TC}(\mathbf{x}^*,\boldsymbol{\Omega})$. Then there exists $\bar{c}>0$ so that for all $c>\bar{c}$  we can find $\bar{\alpha}(c)$ such that for all $\alpha \in (0,\bar{\alpha}(c)]$, the set $\left(\mathbf{x}^*, \boldsymbol{\mu}^*,\boldsymbol{\lambda}^*+\textmd{Null}\left(\mathbf{S}'\right) \right)$ is an attractor of iteration (\ref{equ_02181523})-(\ref{equ_02181525}). In addition, if the sequence $\left\{\mathbf{x}_k,\boldsymbol{\mu}_k,\boldsymbol{\lambda}_k\right\}$ converges to the set $\left(\mathbf{x}^*, \boldsymbol{\mu}^*,\boldsymbol{\lambda}^*+\textmd{Null}\left(\mathbf{S}'\right) \right)$, the rate of convergence of $\|\mathbf{x}_k-\mathbf{x}^*\|$, $\|\boldsymbol{\mu}_k-\boldsymbol{\mu}^*\|$ and $\left\|\boldsymbol{\lambda}_k-\left[\boldsymbol{\lambda}^*+\textmd{Null}\left(\mathbf{S}'\right)\right]\right\|$
is linear.
\end{theorem}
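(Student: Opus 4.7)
The plan is to mimic the proof strategy of Theorem \ref{thm_17401116}, with the Hessian of the standard Lagrangian replaced by that of the augmented Lagrangian, which Proposition \ref{pro_03051237} makes strictly positive definite once $c$ is large enough. The starting observation is that $\mathbf{h}(\mathbf{x}^*)=0$ and $\mathbf{S}\mathbf{x}^*=0$, so the augmented terms $c\nabla\mathbf{h}(\mathbf{x}^*)\mathbf{h}(\mathbf{x}^*)$ and $c\mathbf{L}\mathbf{x}^*$ vanish at the fixed point; hence the first-order conditions (\ref{equ_02181520})--(\ref{equ_02181522}) collapse to the ones used for Algorithm $(\texttt{A}_1)$, and every local minimizer--multiplier triple of $(\texttt{P}_2)$ is a fixed point of the map (\ref{equ_02181523})--(\ref{equ_02181525}). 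Combining Proposition \ref{pro_02181835} (to identify $\textmd{TC}(\mathbf{x}^*,\boldsymbol{\Omega})$ with $\textmd{Null}([\nabla\mathbf{h}(\mathbf{x}^*),\mathbf{S}']')$) with Proposition \ref{pro_03051237} applied to the tangent-cone positivity hypothesis yields a threshold $\bar c > 0$ such that $H_c := \nabla^2_{\mathbf{x}\mathbf{x}}\boldsymbol{\mathcal{L}}_c(\mathbf{x}^*,\boldsymbol{\mu}^*,\boldsymbol{\lambda}^*) \succ 0$ for every $c \geq \bar c$; fix any such $c$.

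Next, to handle the rank deficiency of $\mathbf{S}$ as in Theorem \ref{thm_17401116}, I observe that each increment $\boldsymbol{\lambda}_{k+1}-\boldsymbol{\lambda}_k = \alpha\mathbf{S}\mathbf{x}_k$ lies in $\textmd{Range}(\mathbf{S})$, so the component $\mathbf{J}\boldsymbol{\lambda}_k$ along $\textmd{Null}(\mathbf{S}')$ is a conserved quantity of the iteration and $\mathbf{S}'\boldsymbol{\lambda}_k = \mathbf{S}'(\mathbf{I}-\mathbf{J})\boldsymbol{\lambda}_k$. Passing to the reduced variable $\tilde{\boldsymbol{\lambda}}_k := (\mathbf{I}-\mathbf{J})\boldsymbol{\lambda}_k \in \textmd{Range}(\mathbf{S})$, the target $\boldsymbol{\lambda}^*+\textmd{Null}(\mathbf{S}')$ in the original coordinates corresponds to the single point $\boldsymbol{\lambda}^*$, using $\boldsymbol{\lambda}^* \in \textmd{Range}(\mathbf{S})$. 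Linearizing the reduced recursion about $(\mathbf{x}^*,\boldsymbol{\mu}^*,\boldsymbol{\lambda}^*)$ gives an error dynamics $e_{k+1} = (I-\alpha M)e_k + o(\|e_k\|)$ with
\[
M = \left(\begin{array}{ccc} H_c & \nabla\mathbf{h}(\mathbf{x}^*) & \mathbf{S}' \\ -\nabla\mathbf{h}(\mathbf{x}^*)' & 0 & 0 \\ -\mathbf{S} & 0 & 0 \end{array}\right),
\]
acting on $\mathds{R}^{nN}\times\mathds{R}^N\times\textmd{Range}(\mathbf{S})$. Once I establish that every eigenvalue of $M$ has strictly positive real part, choosing $\bar\alpha(c)$ small enough makes $\rho(I-\alpha M)<1$ for all $\alpha\in(0,\bar\alpha(c)]$, and a standard local stability argument for fixed-point iterations then delivers both the attractor property and the linear rate of convergence.

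The main obstacle is the spectral claim on $M$. Given $Mv=\zeta v$ with $v=(u,p,q)$, computing $v^*Mv$ and noting that the off-diagonal blocks of $M$ are pairwise skew-symmetric yields $\textmd{Re}(\zeta)\|v\|^2 = u^*H_c u \geq 0$, so when $u\ne 0$ positive definiteness of $H_c$ forces $\textmd{Re}(\zeta)>0$. If $u=0$ the system reduces to $\zeta p=0$, $\zeta q=0$, and $\nabla\mathbf{h}(\mathbf{x}^*)p + \mathbf{S}'q = 0$ with $q\in\textmd{Range}(\mathbf{S})$; for $\zeta\ne 0$ this forces $(p,q)=0$, contradicting $v\ne 0$, while the delicate case $\zeta=0$ requires ruling out nontrivial $(p,q)$ in $\nabla\mathbf{h}(\mathbf{x}^*)p + \mathbf{S}'q = 0$. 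This is where Assumption \ref{assu_02091703} combines with the block structure $\nabla\mathbf{h}_i(\mathbf{x}^*) = e_i\otimes\nabla h_i(x^*)$ and the injectivity of $\mathbf{S}'$ on $\textmd{Range}(\mathbf{S})$ to conclude linear independence of the combined column family; this is the only place where the distributed formulation forces a departure from the textbook saddle-point argument. Once this spectral lemma is in place, the attractor property and the linear convergence rate follow by the standard contraction bound on $I-\alpha M$.
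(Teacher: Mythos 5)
Your proposal is correct and follows essentially the same route as the paper's proof: rewrite the iteration as a fixed-point map, pass to $\tilde{\boldsymbol{\lambda}}=(\mathbf{I}-\mathbf{J})\boldsymbol{\lambda}$ using the conservation of the $\textmd{Null}(\mathbf{S}')$ component, invoke Proposition \ref{pro_03051237} to make the augmented Hessian positive definite for $c\geq\bar{c}$, and then establish that the linearization has spectrum with positive real part before concluding by a contraction argument. Your inline spectral argument (separating the cases $u\neq 0$ and $u=0$ and using Proposition \ref{pro_02091833} together with $q\in\textmd{Range}(\mathbf{S})$) is exactly the content of the paper's Lemma \ref{lem_17151116}, merely restricted to the invariant subspace $\textmd{Range}(\mathbf{S})$ instead of carrying the $\frac{1}{\alpha}\mathbf{J}$ block on the full space.
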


The following corollary gives conditions that ensure local convergence to a local minimizer of $(P_1)$ for each agent following Algorithm $(A_2)$.
\begin{corollary}
\label{cor_02181555}
Let Assumptions \ref{assu_02091524} and \ref{assu_02091703} hold and let $\left({x}^*,{\psi}^*\right)$  be a local minimizer-Lagrange multiplier pair of $(P_1)$. Assume also that  $x'\sum_{i=1}^N\left[\nabla^2f_i(x^*)+\psi_i^*\nabla^2h_i(x^*)\right]x>0$ for all $x\in \textmd{TC}(x^*,\Omega)$. Then there exists $\bar{c}>0$ so that for all $c\geq \bar{c}$  we can find $\bar{\alpha}(c)$ such that for all $\alpha \in (0,\bar{\alpha}(c)]$, $\left({x}^*, {\psi}^*\right)$ is a point of attraction for  iteration (\ref{equ_02181536})-(\ref{equ_02181538}), for all $i=1,\ldots,N$. In addition, if the sequence $\left\{x_{i,k},\mu_{i,k}\right\}$ converges to $\left({x}^*,{\psi}^*\right)$, then the rate of convergence of $\|x_{i,k}-x^*\|$ and $\|\mu_{i,k}-\psi^*\|$ is linear.
\end{corollary}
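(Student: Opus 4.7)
The plan is to derive this corollary from Theorem \ref{thm_02181546} by lifting the problem $(P_1)$ to its equivalent augmented problem $(P_2)$, exactly in the spirit of the proof of Corollary \ref{cor_02111822}, but with the stronger (yet weaker-than-Corollary-\ref{cor_02111822}) positive-definiteness assumption now only required on the tangent cone.

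First, by Proposition \ref{pro_14411114} the vector $\mathbf{x}^* = \mathds{1}\otimes x^*$ is a local minimizer of $(P_2)$, and associated with it is a Lagrange multiplier pair $\left(\boldsymbol{\mu}^*,\boldsymbol{\lambda}^*+\textmd{Null}(\mathbf{S}')\right)$ with $\boldsymbol{\lambda}^* \in \textmd{Range}(\mathbf{S})$. Proposition \ref{equ_02111222} identifies $\boldsymbol{\mu}^* = \psi^*$. So the data of the original problem are fully recovered inside $(P_2)$; it remains only to check that the second-order condition assumed in Corollary \ref{cor_02181555} implies the corresponding condition required by Theorem \ref{thm_02181546}.

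The key step is to translate the tangent-cone positivity assumption. From the definition (\ref{equ_02111445}) of $\boldsymbol{\mathcal{L}}$ together with the separable structure $\mathbf{F}(\mathbf{x}) = \sum_i f_i(x_i)$ and $\mathbf{h}_i(\mathbf{x}) = h_i(x_i)$, I would observe that
\begin{equation*}
\nabla^2_{\mathbf{xx}} \boldsymbol{\mathcal{L}}\!\left(\mathbf{x}^*,\boldsymbol{\mu}^*,\boldsymbol{\lambda}^*\right)
= \textmd{diag}\!\left(\nabla^2 f_i(x^*)+\psi_i^* \nabla^2 h_i(x^*),\ i=1,\dots,N\right),
\end{equation*}
since the additive $\boldsymbol{\lambda}'\mathbf{S}\mathbf{x}$ term contributes nothing to the Hessian. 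Proposition \ref{pro_02181835} then gives $\textmd{TC}(\mathbf{x}^*,\boldsymbol{\Omega}) = \{\mathds{1}\otimes h : h\in \textmd{TC}(x^*,\Omega)\}$. Plugging any such lifted direction $\mathbf{z} = \mathds{1}\otimes h$ into the quadratic form collapses it, via the block-diagonal structure, to $h' \sum_{i=1}^N\left[\nabla^2 f_i(x^*)+\psi_i^*\nabla^2 h_i(x^*)\right] h$, which is strictly positive for nonzero $h\in \textmd{TC}(x^*,\Omega)$ by assumption. Hence the hypothesis of Theorem \ref{thm_02181546} is met.

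Theorem \ref{thm_02181546} then delivers $\bar{c}>0$ and, for each $c\geq \bar{c}$, a $\bar{\alpha}(c)>0$ so that the set $\left(\mathbf{x}^*, \boldsymbol{\mu}^*,\boldsymbol{\lambda}^*+\textmd{Null}(\mathbf{S}')\right)$ is an attractor of the centralized iteration (\ref{equ_02181523})--(\ref{equ_02181525}), with linear convergence rates in $\|\mathbf{x}_k-\mathbf{x}^*\|$ and $\|\boldsymbol{\mu}_k-\boldsymbol{\mu}^*\|$. Since the componentwise algorithm $(A_2)$ is, by construction, exactly this centralized iteration rewritten in the coordinates $(x_{i,k},\mu_{i,k},\lambda_{ij,k})$, and since $\mathbf{x}^*=\mathds{1}\otimes x^*$ forces $x_{i,k}\to x^*$ and $\mu_{i,k}\to \psi_i^*$ for every $i$, the per-agent attraction and linear rate claims follow directly. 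The only step that requires any thought is the Hessian/tangent-cone translation above; everything else is a bookkeeping exercise identical to the proof of Corollary \ref{cor_02111822}.
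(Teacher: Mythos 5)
Your proposal is correct and follows essentially the same route as the paper's own proof: lifting via Proposition \ref{pro_14411114}, identifying $\boldsymbol{\mu}^*=\psi^*$ via Proposition \ref{equ_02111222}, noting the block-diagonal structure of $\nabla^2_{\mathbf{xx}}\boldsymbol{\mathcal{L}}$, translating the tangent-cone positivity condition through Proposition \ref{pro_02181835}, and invoking Theorem \ref{thm_02181546}. The only difference is that you spell out explicitly how the quadratic form collapses on lifted directions $\mathds{1}\otimes h$, which the paper leaves implicit.
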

\begin{IEEEproof}
By Proposition \ref{pro_14411114} we have that $\mathbf{x}^* = \mathds{1}\otimes x^*$ is a local minimizer of $(P_2)$ with corresponding Lagrange multipliers $\left(\boldsymbol{\mu}^*,\boldsymbol{\lambda}^*+
\textmd{Null}\left(\mathbf{S}'\right)\right)$, with $\boldsymbol{\lambda}^* \in \textmd{Range}\left(\mathbf{S}\right)$. In addition, by Proposition \ref{equ_02111222} we have that $\boldsymbol{\mu}^*=\psi^*$.
Using the definition of the Lagrangian function introduced in (\ref{equ_02111445}), we have
$$\nabla_{\mathbf{x}\mathbf{x}}^2\boldsymbol{\mathcal{L}}
\left(\mathbf{x}^*,\boldsymbol{\mu}^*,\boldsymbol{\lambda}^*\right)=
\textmd{diag}\left(\nabla^2f_i(x^*)+\psi_i^*\nabla^2h_i(x^*),i=1,\ldots,N\right).$$
In Proposition \ref{pro_02181835} we showed that
$$\textmd{TC}(\mathbf{x}^*,\boldsymbol{\Omega}) = \left\{\mathds{1}\otimes z \ |\ z\in \textmd{TC}(x^*,\Omega)\right\},$$
and therefore the assumption $x'\sum_{i=1}^N\left[\nabla^2f_i(x^*)+\psi_i^*\nabla^2h_i(x^*)\right]x>0$ for all $x\in \textmd{TC}(x^*,\Omega)$ is equivalent to
$\mathbf{x}'\nabla_{\mathbf{x}\mathbf{x}}^2\boldsymbol{\mathcal{L}}
\left(\mathbf{x}^*,\boldsymbol{\mu}^*,\boldsymbol{\lambda}^*\right)\mathbf{x}>0$ for all $\mathbf{x}\in \textmd{TC}(\mathbf{x}^*,\boldsymbol{\Omega})$. Using Theorem \ref{thm_02181546}, the result follows.
\end{IEEEproof}
\begin{remark}
\label{rem_02111901}
In the previous corollary we made the assumption that $x'\sum_{i=1}^N\left[\nabla^2f_i(x^*)+\psi_i^*\nabla^2h_i(x^*)\right]x>0$ for all $x\in \textmd{TC}(x^*,\Omega)$. It turns out that the same assumption appears if we apply directly on $(P_1)$ standard results from the optimization literature concerning convergence of first-order methods used to solve the first order necessary conditions involving the augmented Lagrangian. In other words the assumption is identical when solving $(P_1)$ in a centralized manner.$\Box$ 
\end{remark}

\section{Convergence analysis of the distributed algorithms based on the method of multipliers}
\label{sec_12011444}
In this section we prove a set of results that will be used to give conditions under which \texttt{Algorithm} $(\texttt{A}_3)$ converges to a local minimizer of $(\texttt{P}_1)$. The results are modifications of standard results concerning the method of multipliers (see for example Section 2.2, \cite{Bertsekas1982}). In the standard case, a regularity assumption on the minimizers is used to prove convergence of the method of multipliers. In our setup, this is not the case anymore and therefore, the standard results need to be modified accordingly.

Considering the notations introduced so far, Algorithm $(\texttt{A}_3)$ can be compactly written as
{\small \begin{eqnarray}
\label{equ_03251133}
\mathbf{x}_{k} &=& \arg \min_{\bf{x}} \boldsymbol{\mathcal{L}}_{c_k} (\mathbf{x},\boldsymbol{\mu}_k,\boldsymbol{\lambda}_k),\ \mathbf{x}_{0} = \mathbf{x}^0,\\
\label{equ_03251134}
\boldsymbol{\mu}_{k+1} &=&\boldsymbol{\mu}_{k}+c_k \mathbf{h}(\mathbf{x}_{k}),\ \boldsymbol{\mu}_{0}=\boldsymbol{\mu}^0,\\
\label{equ_03251135}
\boldsymbol{\lambda}_{k+1} &=& \boldsymbol{\lambda}_{k}+c_k \mathbf{S}\mathbf{x}_k,\ \boldsymbol{\lambda}_{0}=\boldsymbol{\lambda}^0.
\end{eqnarray}}
where the unconstrained optimization problem (\ref{equ_03251133}) is solved using the iteration
$$\mathbf{x}_{k,\tau+1} = \mathbf{x}_{k,\tau} - \alpha_{\tau}\left\{ \nabla \mathbf{F}(\mathbf{x}_{k,\tau})+
\nabla\mathbf{h}(\mathbf{x}_{k,\tau})\boldsymbol{\mu}_k+
\mathbf{S}'\boldsymbol{\lambda}_k+c_k\nabla\mathbf{h}(\mathbf{x}_{k,\tau})\mathbf{h}(\mathbf{x}_{k,\tau})+c_k \mathbf{L}\mathbf{x}_{k,\tau}\right\}, \mathbf{x}_{k,0} = \mathbf{x}_{k-1}.$$

Therefore, iterations (\ref{equ_02091554})-(\ref{equ_02091556}) reflect the method of multipliers applied to Problem $(P_2)$, while the iteration (\ref{equ_03121344}) is a gradient descent method for finding a local minimizer of the unconstrained optimization problem $\min_{\mathbf{x}}\boldsymbol{\mathcal{L}}_c (\mathbf{x},\boldsymbol{\mu}_k,\boldsymbol{\lambda}_k)$.

\subsection{Supporting results}
 Before stating the conditions under which convergence can be achieved, let us first start by enumerating a set of supporting results used for the convergence analysis. Their complete proofs can be found in the Appendix section. To simplify notations, let us group the Lagrange multipliers in one vector, that is, $\boldsymbol{\eta}'=[\boldsymbol{\mu}',\boldsymbol{\lambda}']$. When we mention a Lagrange multiplier vector $\boldsymbol{\eta}^*$ corresponding to a local minimizer $\mathbf{x}^*$, we will understand that its subcomponent $\boldsymbol{\lambda}^*$ is the unique vector in $\textmd{Range}(\mathbf{S})$. In addition, let us group the equality constraints functions of $(P_2)$ into one vector-valued function, that is, $\tilde{\mathbf{h}}(\mathbf{x})' = \left(\mathbf{h}(\mathbf{x})',\mathbf{x}'\mathbf{S}'\right)$.

The convergence of the distributed algorithm derived from method of multipliers is based on the following  result, which is an adaptation of the Proposition 2.4, page 108 of \cite{Bertsekas1982} so that it fits our setup.
\begin{proposition}
\label{pro_03051249}
Let Assumptions \ref{assu_02091524} and \ref{assu_02091703} hold, let $(\mathbf{x}^*,\boldsymbol{\eta}^*)$, be a local minimizer-Lagrange multipliers pair of $(P_2)$, and assume that $\mathbf{z}'\nabla_{\mathbf{x}\mathbf{x}}^2\boldsymbol{\mathcal{L}}_{0}(\mathbf{x}^*,\boldsymbol{\eta}^*)\mathbf{z}>0$ for all $\mathbf{z}\in \textmd{TC}(\mathbf{x}^*,\boldsymbol{\Omega})$. In addition let $\bar{c}$ be a positive scalar such that $\nabla_{\mathbf{x}\mathbf{x}}^2\boldsymbol{\mathcal{L}}_{\bar{c}}(\mathbf{x}^*,\boldsymbol{\eta}^*)\succ 0$. There exists positive scalars $c_{\textmd{max}}$, $\delta$, $\varepsilon$ and $M$ such that:
\begin{enumerate}[(a)]
\item For all $\left(\boldsymbol{\eta},c\right)$ in the set $D\subset \mathds{R}^{N+n\bar{N}+1}$ defined by
{\small \begin{equation}
\label{equ_03051258}
D=\left\{\left(\boldsymbol{\eta},c\right) \ |\ \|\mathbf{T}\boldsymbol{\eta}-\boldsymbol{\eta}^*\|< c\delta,\ \bar{c}\leq c \leq c_{\textmd{max}} \ \right\},
\end{equation}}
where
{\small $$\mathbf{T}=\left[\begin{array}{cc}
\mathbf{I} & \mathbf{0}\\
\mathbf{0} & \mathbf{I}-\mathbf{J}
\end{array}
\right],$$}
with $\mathbf{J}$ the orthogonal projection operator on $\textmd{Null}\left(\mathbf{S}'\right)$, the problem
{\small \begin{eqnarray}
\label{equ_03051312}
\min & \boldsymbol{\mathcal{L}}_c(\mathbf{x},\boldsymbol{\eta})\\
\nonumber
\textmd{subject to } & \mathbf{x}\in \mathcal{S}(\mathbf{x}^*;\varepsilon)
\end{eqnarray}}
has a unique solution denoted by $\mathbf{x}(\boldsymbol{\eta},c)$. The function $\mathbf{x}(\cdot,\cdot)$ is continuously differentiable in the interior of $D$, and for all $\left(\boldsymbol{\eta},c\right)\in D$, we have
{\small \begin{equation}
\label{equ_03051319}
\|\mathbf{x}(\boldsymbol{\eta},c)-\mathbf{x^*}\|\leq M \|\boldsymbol{\eta}-\boldsymbol{\eta}^*\|/c.
\end{equation}}
\item For all $\left(\boldsymbol{\eta},c\right)\in D$, we have
{\small \begin{equation}
\label{equ_03051321}
\|\tilde{\boldsymbol{\eta}}(\boldsymbol{\eta},c)-\boldsymbol{\eta}^*\|\leq M \|\boldsymbol{\eta}-\boldsymbol{\eta}^*\|/c,
\end{equation}}
where
{\small \begin{equation}
\label{equ_03051431}
\tilde{\boldsymbol{\eta}}(\boldsymbol{\eta},c) = \mathbf{T}\boldsymbol{\eta}+c \tilde{\mathbf{h}}\left(\mathbf{x}(\boldsymbol{\eta},c)\right).
\end{equation}}
\item For all $\left(\boldsymbol{\eta},c\right)\in D$, the matrix $\nabla_{\mathbf{x}\mathbf{x}}^2\boldsymbol{\mathcal{L}}_c[\mathbf{x}(\boldsymbol{\eta},c),\boldsymbol{\eta}]$ is positive definite.
\end{enumerate}
\end{proposition}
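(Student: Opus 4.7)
The plan is to adapt the proof of Proposition 2.4, page 108 of \cite{Bertsekas1982}, to handle the non-regularity created by the rank deficiency of $\mathbf{S}$. The key enabling observation is that $\boldsymbol{\mathcal{L}}_c(\mathbf{x},\boldsymbol{\mu},\boldsymbol{\lambda})$ depends on $\boldsymbol{\lambda}$ only through $\mathbf{S}'\boldsymbol{\lambda}$, so the augmented Lagrangian, its minimizers, and its gradient in $\mathbf{x}$ are all invariant under shifts of $\boldsymbol{\lambda}$ by vectors in $\textmd{Null}(\mathbf{S}')$. The operator $\mathbf{T}$ is precisely the associated quotient projector: since $\mathbf{I}-\mathbf{J}$ is the orthogonal projector onto $\textmd{Range}(\mathbf{S})$, $\mathbf{T}\boldsymbol{\eta}$ is the canonical representative in the space on which the problem actually depends. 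With this reduction, the classical argument proceeds essentially as in the regular case.

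For part (a), I would first verify that $\nabla_{\mathbf{x}}\boldsymbol{\mathcal{L}}_c(\mathbf{x}^*,\boldsymbol{\eta}^*)=0$ for every $c\geq 0$, which is immediate from $\mathbf{h}(\mathbf{x}^*)=0$, $\mathbf{S}\mathbf{x}^*=0$, and the non-augmented first-order conditions of $(\texttt{P}_2)$. By Proposition \ref{pro_03051237} and continuity of $\nabla_{\mathbf{x}\mathbf{x}}^2\boldsymbol{\mathcal{L}}_c$ in all its arguments, there exist $c_{\textmd{max}}>\bar{c}$ and a neighborhood of $(\mathbf{x}^*,\boldsymbol{\eta}^*)$ on which the Hessian stays positive definite uniformly in $c\in[\bar{c},c_{\textmd{max}}]$. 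Applying the implicit function theorem to $G(\mathbf{x},\boldsymbol{\eta},c)\triangleq\nabla_{\mathbf{x}}\boldsymbol{\mathcal{L}}_c(\mathbf{x},\boldsymbol{\eta})=0$ (which depends on $\boldsymbol{\eta}$ only through $\mathbf{T}\boldsymbol{\eta}$) yields a $C^1$ function $\mathbf{x}(\boldsymbol{\eta},c)$ on a set $D$ of the form (\ref{equ_03051258}); the positive definiteness of the Hessian upgrades stationarity to unique minimization in $\mathcal{S}(\mathbf{x}^*;\varepsilon)$. For the estimate (\ref{equ_03051319}), differentiating $G\equiv 0$ implicitly gives
\[
\nabla_{\boldsymbol{\eta}}\mathbf{x}(\boldsymbol{\eta},c) = -\bigl(\nabla_{\mathbf{x}\mathbf{x}}^2\boldsymbol{\mathcal{L}}_c\bigr)^{-1}\bigl[\nabla\mathbf{h}(\mathbf{x}(\boldsymbol{\eta},c)),\mathbf{S}'\bigr].
\]
Writing the penalty part of the Hessian as $c A A'$ with $A=[\nabla\mathbf{h}(\mathbf{x}),\mathbf{S}']$ and invoking the Sherman--Morrison--Woodbury identity (exactly as in the cited reference) shows that $\bigl(\nabla_{\mathbf{x}\mathbf{x}}^2\boldsymbol{\mathcal{L}}_c\bigr)^{-1}A$ is of order $1/c$ uniformly over $D$; integrating along the segment from $\boldsymbol{\eta}^*$ to $\boldsymbol{\eta}$ produces (\ref{equ_03051319}).

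For part (b), the first-order optimality of $\mathbf{x}(\boldsymbol{\eta},c)$ can be rewritten, using (\ref{equ_03051431}), as
\[
\nabla\mathbf{F}(\mathbf{x}(\boldsymbol{\eta},c))+\nabla\mathbf{h}(\mathbf{x}(\boldsymbol{\eta},c))\,\tilde{\boldsymbol{\mu}}(\boldsymbol{\eta},c)+\mathbf{S}'\tilde{\boldsymbol{\lambda}}(\boldsymbol{\eta},c)=0,
\]
so $\tilde{\boldsymbol{\eta}}(\boldsymbol{\eta},c)$ satisfies the \emph{non-augmented} first-order conditions of $(\texttt{P}_2)$ at the point $\mathbf{x}(\boldsymbol{\eta},c)$. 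Subtracting the same identity at $(\mathbf{x}^*,\boldsymbol{\eta}^*)$, Taylor-expanding the left-hand side in $\mathbf{x}$ around $\mathbf{x}^*$, and then applying $\mathbf{T}$ to restrict the $\boldsymbol{\lambda}$-block to $\textmd{Range}(\mathbf{S})$, the leading term linear in $(\mathbf{T}\boldsymbol{\eta}-\boldsymbol{\eta}^*)$ cancels against the identity contribution in (\ref{equ_03051431}), and what remains is bounded by a constant times $\|\mathbf{x}(\boldsymbol{\eta},c)-\mathbf{x}^*\|$; applying (\ref{equ_03051319}) yields (\ref{equ_03051321}) after absorbing the constant into $M$. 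Part (c) is immediate from continuity: since $\nabla_{\mathbf{x}\mathbf{x}}^2\boldsymbol{\mathcal{L}}_c(\mathbf{x}^*,\boldsymbol{\eta}^*)\succ 0$ for $c\in[\bar{c},c_{\textmd{max}}]$ and $\mathbf{x}(\boldsymbol{\eta},c)\to\mathbf{x}^*$, $\mathbf{T}\boldsymbol{\eta}\to\boldsymbol{\eta}^*$ on $D$, the positive definiteness extends to all of $D$ after shrinking $\delta$ if necessary.

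The main obstacle is the non-uniqueness of $\boldsymbol{\lambda}^*$, which is only defined modulo $\textmd{Null}(\mathbf{S}')$; this prevents a direct application of the implicit function theorem to the full optimality system in the original $(\boldsymbol{\mu},\boldsymbol{\lambda})$-coordinates, because the Jacobian of that system is rank deficient. The gauge invariance of $\boldsymbol{\mathcal{L}}_c$ in the $\textmd{Null}(\mathbf{S}')$ direction of $\boldsymbol{\lambda}$ lets all computations be carried out on the quotient, with $\mathbf{T}\boldsymbol{\eta}$ as the canonical representative. Using $\|\mathbf{T}\boldsymbol{\eta}-\boldsymbol{\eta}^*\|$ rather than $\|\boldsymbol{\eta}-\boldsymbol{\eta}^*\|$ in the definition of $D$ is the single critical modification of the classical proof, and once it is in place the rest of the argument follows the standard template.
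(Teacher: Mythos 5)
Your overall strategy --- quotient out the $\textmd{Null}(\mathbf{S}')$ gauge freedom via $\mathbf{T}$ and then run the classical argument --- identifies the right structural feature, but the execution has two genuine gaps. First, applying the implicit function theorem to the primal equation $\nabla_{\mathbf{x}}\boldsymbol{\mathcal{L}}_c(\mathbf{x},\boldsymbol{\eta})=0$ alone, in the original variables, only yields a solution $\mathbf{x}(\boldsymbol{\eta},c)$ for $\boldsymbol{\eta}$ in a \emph{fixed} neighborhood of $\boldsymbol{\eta}^*$, whereas the set $D$ in (\ref{equ_03051258}) requires solvability on the $c$-scaled region $\|\mathbf{T}\boldsymbol{\eta}-\boldsymbol{\eta}^*\|<c\delta$, which is the whole point of the method of multipliers (a poor multiplier estimate is tolerated if $c$ is large). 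The paper, following Bertsekas, obtains this by rescaling $t=\mathbf{T}(\boldsymbol{\eta}-\boldsymbol{\eta}^*)/c$, $\gamma=1/c$, introducing $\tilde{\boldsymbol{\eta}}$ as an additional unknown, and applying the implicit function theorem to the \emph{joint} primal--dual system (\ref{equ_03181856})--(\ref{equ_03181857}) over the compact set $K=\{(0,\gamma)\ |\ \gamma\in[1/c_{\textmd{max}},1/\bar c]\}$. This is also where the true reason for $c_{\textmd{max}}$ appears: the Jacobian (\ref{equ_03181911}) of that joint system is singular at $\gamma=0$, with nullspace $\left\{(\mathbf{0}',\mathbf{0}',\mathbf{w}')'\ |\ \mathbf{w}\in\textmd{Null}(\mathbf{S}')\right\}$, precisely because $\nabla\tilde{\mathbf{h}}(\mathbf{x}^*)$ is rank deficient; keeping $\gamma\geq 1/c_{\textmd{max}}>0$ restores invertibility. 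Your stated justification for $c_{\textmd{max}}$ (uniform positive definiteness of the Hessian over $[\bar c,c_{\textmd{max}}]$) is vacuous --- Proposition \ref{pro_03051237} already gives positive definiteness for \emph{all} $c\geq\bar c$, and it only improves as $c$ grows --- so your argument never explains why an upper bound on $c$ is needed, which is the central departure from the regular case and the reason the superlinear rate is lost.

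Second, the Sherman--Morrison--Woodbury step ``exactly as in the cited reference'' is exactly where the cited reference uses regularity. Here $A=[\nabla\mathbf{h}(\mathbf{x}^*),\mathbf{S}']$ is not full column rank (Proposition \ref{pro_02091833}), so $A'H^{-1}A$ is singular and $(I+cA'H^{-1}A)^{-1}$ does not contract like $1/c$ on $\textmd{Null}(A)$; moreover $H=\nabla_{\mathbf{x}\mathbf{x}}^2\boldsymbol{\mathcal{L}}_0(\mathbf{x}^*,\boldsymbol{\eta}^*)$ is only assumed positive definite on the tangent cone, so $H^{-1}$ need not exist. Both defects are repairable (split off $\textmd{Null}(A)$, on which $A$ itself vanishes, and absorb $\bar c AA'$ into the invertible part by working with $\boldsymbol{\mathcal{L}}_{\bar c}$ instead of $\boldsymbol{\mathcal{L}}_0$), but that repair is the substance of the non-regular case and cannot be imported wholesale. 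Similarly, your part (b) needs the injectivity of $\nabla\tilde{\mathbf{h}}(\mathbf{x}^*)$ restricted to $\mathds{R}^N\times\textmd{Range}(\mathbf{S})$ to convert the identity $\nabla\tilde{\mathbf{h}}(\mathbf{x}^*)(\tilde{\boldsymbol{\eta}}-\boldsymbol{\eta}^*)=O(\|\mathbf{x}(\boldsymbol{\eta},c)-\mathbf{x}^*\|)$ into a bound on $\|\tilde{\boldsymbol{\eta}}-\boldsymbol{\eta}^*\|$; this does hold, by Proposition \ref{pro_02091833} together with $\tilde{\boldsymbol{\lambda}},\boldsymbol{\lambda}^*\in\textmd{Range}(\mathbf{S})$, but it must be invoked. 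In the paper all of these estimates fall out of the single integral representation (\ref{equ_03191627}) with the inverse Jacobian $A(t,\gamma)$ of (\ref{equ_03191629}) uniformly bounded over the compact parameter set --- which is the cleaner route once the joint system is set up.
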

The proof of this proposition can be found in the Appendix section. At this point we would like only the emphasize what is different compared with the original formulation.
First recall that the assumption on the Hessian $\nabla_{\mathbf{x}\mathbf{x}}^2\boldsymbol{\mathcal{L}}_{0}(\mathbf{x}^*,\boldsymbol{\eta}^*)$ simply means that $\mathbf{x}^*$ is a strictly local minimizer. Generally speaking, the proof of this result follows the same lines as the proof of Proposition 2.4, \cite{Bertsekas1982}. However, since the local minimizer $\mathbf{x}^*$ is not regular, we made some changes in the statement of the proposition compared to the original result, and consequently we need to adapt the proof accordingly. Compared to Proposition 2.4, page 108 of \cite{Bertsekas1982}, our results has three main differences. The first difference consists of imposing an upper bound on $c$, namely $c_{\textmd{max}}$.  The reason we introduced $c_{\textmd{max}}$ is to ensure that a certain Jacobian matrix that depends on $c$ (explicitly defined in the proof) is invertible.  The scalar $c_{\textmd{max}}$ however can be made arbitrarily large. The price paid for this change is the prevention of reaching a theoretical superlinear rate of convergence for Algorithm $(\texttt{A}_3)$.

The second difference is the introduction of operator $\mathbf{T}$. This operator acting on $\boldsymbol{\eta}$ ensures that $\tilde{\boldsymbol{\lambda}}(\boldsymbol{\eta},c)\in \textmd{Range}(\mathbf{S})$ for all $(\boldsymbol{\eta},c)\in D$, where $\tilde{\boldsymbol{\eta}}=(\tilde{\boldsymbol{\mu}},\tilde{\boldsymbol{\lambda}})$. In defining the set $D$, the matrix $\mathbf{T}$ induces a neighborhood around $\boldsymbol{\eta}^*$ were only points $\boldsymbol{\eta} = (\boldsymbol{\mu},\boldsymbol{\lambda})$, with $\boldsymbol{\lambda}\in \textmd{Range}(\mathbf{S})$ are considered. In this neighborhood, $\boldsymbol{\eta}^* = (\boldsymbol{\mu}^*,\boldsymbol{\lambda}^*)$ is a unique Lagrange multiplier vector corresponding to the local minimizer $\mathbf{x}^*$.

The third difference is the definition of $\tilde{\boldsymbol{\eta}}$ in (\ref{equ_03051431}). Compared to the original statement\footnote{$\boldsymbol{\eta}$ corresponds to $\lambda$ in Proposition 2.4, \cite{Bertsekas1982}.} of Proposition 2.4, \cite{Bertsekas1982}, we introduce the operator $\mathbf{T}$ that multiplies $\boldsymbol{\eta}$, to deal with the fact  as $\mathbf{x}^*$ is not regular local minimizer. As a consequence, we will have that $\tilde{\boldsymbol{\lambda}}(\boldsymbol{\eta},c)\in \textmd{Range}(\mathbf{S})$ for all $(\boldsymbol{\eta},c)\in D$, where $\tilde{\boldsymbol{\eta}} = (\tilde{\boldsymbol{\mu}},\tilde{\boldsymbol{\lambda}})$.$\Box$

Given a minimizer-Lagrange multiplier pair $(\mathbf{x}^*,\boldsymbol{\eta}^*)$ of $(P_2)$, let us define the following matrix
{\small \begin{equation}
\label{equ_03061931}
{\mathbf{M}} = \left\{\nabla \tilde{\mathbf{h}}(\mathbf{x}^*)'\left[\nabla_{\mathbf{x}\mathbf{x}}^2{\boldsymbol{\mathcal{L}}}_c\left(\mathbf{x}^*,\boldsymbol{\eta}^*\right)\right]^{-1}\nabla \tilde{\mathbf{h}}(\mathbf{x}^*)\right\}^{-1}-c\mathbf{I}
\end{equation}}
for any $c$ for which $\nabla_{\mathbf{x}\mathbf{x}}^2{\boldsymbol{\mathcal{L}}}_c\left(\mathbf{x}^*,\boldsymbol{\eta}^*\right)$ is invertible.
In addition, it can be shown that if $\left[\nabla_{\mathbf{x}\mathbf{x}}^2{\boldsymbol{\mathcal{L}}}_0\left(\mathbf{x}^*,\boldsymbol{\eta}^*\right)\right]^{-1}$ exist, then
{\small \begin{equation}
\label{equ_03061939}
{\mathbf{M}} = \left\{\nabla \tilde{\mathbf{h}}(\mathbf{x}^*)'\left[\nabla_{\mathbf{x}\mathbf{x}}^2{\boldsymbol{\mathcal{L}}}_0\left(\mathbf{x}^*,\boldsymbol{\eta}^*\right)\right]^{-1}\nabla \tilde{\mathbf{h}}(\mathbf{x}^*)\right\}^{-1},
\end{equation}}
respectively.

\subsection{Method of multipliers - Algorithm ($\texttt{A}_3$)}
The following Theorem is the equivalent of Proposition 2.7 of \cite{Bertsekas1982} and if formulated keeping in mind the distributed setup of the problem. Our result does not include the case of superlinear rate of convergence since we upper bound the scalar $c$ and its statement is  adapted so that it fits to the characteristics of Problem $(P_2)$.
\begin{theorem}
\label{pro_03061521}
Let Assumptions \ref{assu_02091524} and \ref{assu_02091703} hold, let $(\mathbf{x}^*,\boldsymbol{\eta}^*)$, with $\boldsymbol{\eta}^*=(\boldsymbol{\mu}^*,\boldsymbol{\lambda}^*)$ be a local minimizer-Lagrange multipliers pair of $(P_2)$ that satisfies
$\mathbf{z}'\nabla_{\mathbf{x}\mathbf{x}}^2{\boldsymbol{\mathcal{L}}}_0\left(\mathbf{x}^*,\boldsymbol{\eta}^*\right)\mathbf{z}>0$ for all $\mathbf{z}\in \textmd{TC}(\mathbf{x}^*,\boldsymbol{\Omega})$. In addition, let $\bar{c}$, $\delta$ and $c_{\textmd{max}}$ be as in Proposition \ref{pro_03051249} with $\bar{c}>\max\{-2{\mathbf{e}}_1,\ldots,-2{\mathbf{e}}_{nN}\}$,
where $\mathbf{e}_1,\ldots,\mathbf{e}_{nN}$ are the eigenvalues of $\mathbf{M}$ defined in (\ref{equ_03061931}). Then there exists $\delta_1$ with $0<\delta_1\leq \delta$ such that if $\{c_k\}$ and $\boldsymbol{\eta}_0$ satisfy
{\small \begin{equation}
\label{equ_03062005}
\|\mathbf{T}\boldsymbol{\eta}_0-\boldsymbol{\eta}^*\|<\delta_1c_0,\  \bar{c}\leq c_k\leq c_{k+1}\leq c_{\textmd{max}}\ \forall k,
\end{equation}}
then for the sequence $\{\boldsymbol{\eta}_k\}$ generated by
{\small \begin{equation}
\label{equ_03062006}
\boldsymbol{\eta}_{k+1} = \mathbf{T}\boldsymbol{\eta}_k+c_k\tilde{\mathbf{h}}\left(\mathbf{x}(\boldsymbol{\eta}_k,c_k)\right),
\end{equation}}
we have that $\|\boldsymbol{\eta}_k-\boldsymbol{\eta}^*\|$ and $\|\mathbf{x}(\boldsymbol{\eta}_k,c_k)-\mathbf{x}^*\|$ converge to zero. Furthermore if  $\boldsymbol{\eta}_{k}\neq \boldsymbol{\eta}^*$ for all $k$, there holds
{\small \begin{equation}
\label{equ_03062013}
\lim \sup_{k\rightarrow \infty} \frac{\|\boldsymbol{\eta}_{k+1}-\boldsymbol{\eta}^*\|}{\|\boldsymbol{\eta}_{k}-\boldsymbol{\eta}^*\|}\leq \max_{i=1\ldots nN}\left|\frac{{\mathbf{e}_i}}{{\mathbf{e}_i}+c_{\textmd{max}}}\right|. \Box
\end{equation}}
\end{theorem}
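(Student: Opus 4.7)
The approach mirrors the classical convergence proof of the method of multipliers (Proposition~2.7 of \cite{Bertsekas1982}), adapted to the non-regular setting by carefully tracking the action of the projection operator $\mathbf{T}$. The first step is to observe that the tangent-cone positive-definiteness hypothesis, together with Proposition~\ref{pro_03051237}, delivers a $\bar c$ for which $\nabla_{\mathbf{x}\mathbf{x}}^2\boldsymbol{\mathcal{L}}_{\bar c}(\mathbf{x}^*,\boldsymbol{\eta}^*)\succ 0$, which is exactly what Proposition~\ref{pro_03051249} requires. This supplies the constants $c_{\max}$, $\delta$, $\varepsilon$, $M$, the minimizer map $\mathbf{x}(\cdot,\cdot)$, and the primal-dual contraction estimates (\ref{equ_03051319})--(\ref{equ_03051321}). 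I would further enlarge $\bar c$ if necessary so that $M/\bar c<1$, which is innocuous.

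The next step is to show by induction on $k$ that $\|\mathbf{T}\boldsymbol{\eta}_k-\boldsymbol{\eta}^*\|<\delta_1 c_k$, ensuring $(\boldsymbol{\eta}_k,c_k)\in D$ and well-posedness of (\ref{equ_03062006}). Two facts specific to the distributed formulation make the induction close cleanly: (i) $\boldsymbol{\mathcal{L}}_c(\mathbf{x},\boldsymbol{\eta})$ depends on $\boldsymbol{\lambda}$ only through $\boldsymbol{\lambda}'\mathbf{S}\mathbf{x}$, so $\mathbf{x}(\boldsymbol{\eta},c)=\mathbf{x}(\mathbf{T}\boldsymbol{\eta},c)$ and hence $\tilde{\boldsymbol{\eta}}(\boldsymbol{\eta},c)=\tilde{\boldsymbol{\eta}}(\mathbf{T}\boldsymbol{\eta},c)$; and (ii) the $\boldsymbol{\lambda}$-block of $\boldsymbol{\eta}_{k+1}$ equals $(\mathbf{I}-\mathbf{J})\boldsymbol{\lambda}_k+c_k\mathbf{S}\mathbf{x}(\boldsymbol{\eta}_k,c_k)\in\textmd{Range}(\mathbf{S})$, so $\mathbf{T}\boldsymbol{\eta}_{k+1}=\boldsymbol{\eta}_{k+1}$ for every $k\geq 0$. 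Combining these observations with (\ref{equ_03051321}) applied at $\mathbf{T}\boldsymbol{\eta}_k$ gives $\|\mathbf{T}\boldsymbol{\eta}_{k+1}-\boldsymbol{\eta}^*\|=\|\boldsymbol{\eta}_{k+1}-\boldsymbol{\eta}^*\|\leq (M/c_k)\|\mathbf{T}\boldsymbol{\eta}_k-\boldsymbol{\eta}^*\|<(M/\bar c)\,\delta_1 c_{k+1}<\delta_1 c_{k+1}$, closing the induction. Iterating this inequality produces geometric convergence $\boldsymbol{\eta}_k\to\boldsymbol{\eta}^*$, and (\ref{equ_03051319}) then yields $\mathbf{x}(\boldsymbol{\eta}_k,c_k)\to\mathbf{x}^*$.

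Finally, for the asymptotic rate (\ref{equ_03062013}) I would linearize the iteration map $\Phi_{c}(\boldsymbol{\eta})=\mathbf{T}\boldsymbol{\eta}+c\tilde{\mathbf{h}}(\mathbf{x}(\boldsymbol{\eta},c))$ about $\boldsymbol{\eta}^*$. Implicit differentiation of the stationarity equation $\nabla_{\mathbf{x}}\boldsymbol{\mathcal{L}}_c(\mathbf{x}(\boldsymbol{\eta},c),\boldsymbol{\eta})=0$ yields $\nabla_{\boldsymbol{\eta}}\mathbf{x}=-[\nabla_{\mathbf{x}\mathbf{x}}^2\boldsymbol{\mathcal{L}}_c]^{-1}\nabla\tilde{\mathbf{h}}$, so the Jacobian of $\Phi_c$ at $\boldsymbol{\eta}^*$ equals $\mathbf{T}-c\,\nabla\tilde{\mathbf{h}}(\mathbf{x}^*)'[\nabla_{\mathbf{x}\mathbf{x}}^2\boldsymbol{\mathcal{L}}_c(\mathbf{x}^*,\boldsymbol{\eta}^*)]^{-1}\nabla\tilde{\mathbf{h}}(\mathbf{x}^*)$, which, restricted to the invariant subspace $\textmd{Range}(\mathbf{T})$, simplifies via (\ref{equ_03061931}) to $\mathbf{I}-c(\mathbf{M}+c\mathbf{I})^{-1}=\mathbf{M}(\mathbf{M}+c\mathbf{I})^{-1}$ with eigenvalues $\mathbf{e}_i/(\mathbf{e}_i+c)$. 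The assumption $\bar c>\max_i(-2\mathbf{e}_i)$ is precisely what makes $|\mathbf{e}_i/(\mathbf{e}_i+c)|<1$ uniformly for $c\in[\bar c,c_{\max}]$. A Taylor expansion of $\Phi_{c_k}$ around $\boldsymbol{\eta}^*$, combined with the geometric decay already proved to absorb the second-order remainder, then yields the $\limsup$ bound (\ref{equ_03062013}).

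The main obstacle is executing this linearization cleanly in the presence of the rank deficiency of $\mathbf{S}$: the expression $\nabla\tilde{\mathbf{h}}'[\nabla_{\mathbf{x}\mathbf{x}}^2\boldsymbol{\mathcal{L}}_c]^{-1}\nabla\tilde{\mathbf{h}}$ is singular, so every inversion in the definition of $\mathbf{M}$ and in the Jacobian must be interpreted as acting on $\textmd{Range}(\mathbf{T})$. The invariance of this subspace under the iteration, established in the inductive argument above, is exactly what allows the effective spectrum of the iteration map to be identified with $\{\mathbf{e}_i/(\mathbf{e}_i+c)\}$ and the non-regularity to be fully absorbed into the projection step, paralleling but strictly generalizing the standard regular-minimizer argument of \cite{Bertsekas1982}.
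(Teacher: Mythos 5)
Your overall architecture is right, and several of your structural observations are both correct and important: the invariance $\mathbf{T}\boldsymbol{\eta}_{k+1}=\boldsymbol{\eta}_{k+1}$ (since the $\boldsymbol{\lambda}$-block of the update lands in $\textmd{Range}(\mathbf{S})$), the identification of the linearized iteration map with $\mathbf{M}(\mathbf{M}+c\mathbf{I})^{-1}$ and hence of the eigenvalues $\mathbf{e}_i/(\mathbf{e}_i+c)$, and the need to interpret the inversions on $\textmd{Range}(\mathbf{T})$ are all present (explicitly or implicitly) in the paper's argument. The gap is in your convergence step. You drive the induction with the bound (\ref{equ_03051321}), $\|\tilde{\boldsymbol{\eta}}(\boldsymbol{\eta},c)-\boldsymbol{\eta}^*\|\leq M\|\boldsymbol{\eta}-\boldsymbol{\eta}^*\|/c$, and declare that enlarging $\bar{c}$ until $M/\bar{c}<1$ is ``innocuous.'' It is not, for two reasons. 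First, $\bar{c}$ is part of the theorem's hypotheses (it is constrained only by positive definiteness of $\nabla^2_{\mathbf{xx}}\boldsymbol{\mathcal{L}}_{\bar c}$ and by $\bar{c}>\max_i\{-2\mathbf{e}_i\}$, with $\bar c\leq c_k\leq c_{\max}$), so a proof valid only for larger $\bar{c}$ proves a weaker statement. Second, and more seriously, $M$ is not a free constant: it comes from the implicit function theorem applied over the compact set $K=\{(0,\gamma)\,:\,\gamma\in[1/c_{\max},1/\bar c]\}$ and is controlled by a uniform bound on $\|A(t,\gamma)\|$, where $A$ is the inverse of the Jacobian (\ref{equ_03181911}); that Jacobian degenerates as $\gamma\to 0$, so $M$ grows with $c_{\max}$, and there is no guarantee that $M/\bar{c}<1$ can be arranged with $\bar c\leq c_{\max}$. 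This is exactly the point of the bounded-$c$ regime: when $c_k$ cannot be sent to infinity, the $M/c$ estimate alone does not yield a contraction, which is why the hypothesis $\bar{c}>\max_i\{-2\mathbf{e}_i\}$ is there at all.

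The paper closes this by proving an exact mean-value representation (Proposition \ref{pro_03061450}),
\begin{equation*}
\tilde{\boldsymbol{\eta}}(\boldsymbol{\eta},c)-\boldsymbol{\eta}^*=\int_0^1 N_c\left(\boldsymbol{\eta}^*+\zeta\,\mathbf{T}(\boldsymbol{\eta}-\boldsymbol{\eta}^*)\right)\mathbf{T}(\boldsymbol{\eta}-\boldsymbol{\eta}^*)\,d\zeta ,
\end{equation*}
with $N_c(\boldsymbol{\eta})=\mathbf{I}-c\,\nabla\tilde{\mathbf{h}}'[\nabla^2_{\mathbf{xx}}\boldsymbol{\mathcal{L}}_c]^{-1}\nabla\tilde{\mathbf{h}}$, then bounding $\|N_c(\boldsymbol{\eta})\|\leq\max_i|\mathbf{e}_i/(\mathbf{e}_i+c)|+\varepsilon_1<1$ on a sufficiently small set $D_1$ by continuity. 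This single estimate simultaneously keeps the iterates in $D_1$, gives convergence, and gives the rate (\ref{equ_03062013}), using only $\|\mathbf{T}\|=1$ and $\mathbf{T}\boldsymbol{\eta}^*=\boldsymbol{\eta}^*$. Your own rate analysis already contains this contraction factor; if you promote your linearization from an asymptotic Taylor expansion at $\boldsymbol{\eta}^*$ to the locally uniform integral form above and use \emph{it} (rather than $M/c$) to drive the induction, the extra assumption on $\bar{c}$ disappears and your proof becomes the paper's.
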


In the following we show that under some conditions, \texttt{Algorithm} $(\texttt{A}_3)$ ensures the convergence of each $x_{i,k}$ to $x^*$ and the convergence of $\mu_{i,k}$ to $\mu_i^*$, where $\boldsymbol{\mu}^* = (\mu_i^*)$. The algorithm does not guarantee convergence of $\lambda_{ij,k}$ to $\lambda_{ij}^*$, where $\boldsymbol{\lambda}^* = (\lambda_{ij}^*)$, for $j\in \mathcal{N}_i$ and $i=1,\ldots,N$. In fact, if convergence is achieved, $\boldsymbol{\lambda}_k$ converges to the set $\boldsymbol{\lambda}^*+\textmd{Null}(\mathbf{S}')$.

\begin{corollary}
\label{cor_03122121}
Let Assumptions \ref{assu_02091524} and \ref{assu_02091703} hold, let $({x}^*,\psi^*)$ be a local minimizer-Lagrange multipliers pair of $(P_1)$. In addition, let $\mathbf{x}^* = \mathds{1}\otimes x^*$ be a local minimizer of $(P_2)$ (as stated by Proposition \ref{pro_14411114}), with unique Lagrange multipliers   $(\boldsymbol{\mu}^*,\boldsymbol{\lambda}^*)$ and $\boldsymbol{\lambda}^* \in \textmd{Range}(\mathbf{S})$. Assume also that
${z}'\sum_{i=1}^N\left(\nabla^2f_i(x^*)+\psi_i^*\nabla^2 h_i(x^*)\right){z}>0$ for all ${z}\in \textmd{TC}({x}^*,{\Omega})$. There exist scalars $c_{\textmd{max}}$, $\delta$, $\delta_1$ with   $0<\delta_{1}\leq \delta$ and $c_{\textmd{max}}$ satisfying $\bar{c}>\max\{-2{\mathbf{e}}_1,\ldots,-2{\mathbf{e}}_{nN}\}$ where $\mathbf{e}_1,\ldots,\mathbf{e}_{nN}$ are the eigenvalues of $\mathbf{M}$ defined in (\ref{equ_03061931}) such that if the sequence $\{c_k\}$, $\boldsymbol{\mu}_0$ and $\boldsymbol{\lambda}_0$ satisfy
{\small \begin{eqnarray}
\label{equ_03071833}
&\left(\|{\boldsymbol{\mu}}_0-{\psi}^*\|^2 + \| (\mathbf{I}-\mathbf{J}) {\boldsymbol{\lambda}}_0 -\boldsymbol{\lambda}^* \|^2\right)^{1/2} <\delta_{1}c_0,\\
& \bar{c}\leq c_k\leq c_{k+1}\leq c_{\textmd{max}},
\end{eqnarray}}
then for the sequences $\{x_{i,k}\}$ and $\{{\mu}_{i,k}\}$ generated by the iteration (\ref{equ_03251133})-(\ref{equ_03251135}), we have that $\|{x}_{i,k}- {x}^*\|$ and $\|{\mu}_{i,k}- {\psi}_i^*\|$ converge to zero. Furthermore if ${\mu}_{i,k}\neq {\psi}_i^*$ for all $k$, the rate of convergence of $\{{\mu}_{i,k}\}$ is linear.
\end{corollary}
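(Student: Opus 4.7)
The plan is to deduce the corollary by invoking Theorem \ref{pro_03061521} applied to the lifted problem $(\texttt{P}_2)$, using the equivalence of $(\texttt{P}_1)$ and $(\texttt{P}_2)$ established earlier, and then translating the resulting convergence of the stacked iterates back to each agent's component.

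First, I would set up the correspondence between the two problems. By Proposition \ref{pro_14411114}, the vector $\mathbf{x}^* = \mathds{1}\otimes x^*$ is a local minimizer of $(\texttt{P}_2)$, and by Proposition \ref{equ_02111222}, the associated Lagrange multiplier for the equality constraint $\mathbf{h}(\mathbf{x})=0$ satisfies $\boldsymbol{\mu}^* = \psi^*$, while $\boldsymbol{\lambda}^* \in \textmd{Range}(\mathbf{S})$ is the unique multiplier associated with $\mathbf{S}\mathbf{x}=0$. Setting $\boldsymbol{\eta}^* = (\boldsymbol{\mu}^*,\boldsymbol{\lambda}^*)$, this gives the local minimizer--Lagrange multipliers pair needed to apply Theorem \ref{pro_03061521}.

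Next, I would verify the second-order condition on the lifted problem. By the block computation already carried out in the proof of Corollary \ref{cor_02181555}, the Hessian of the (non-augmented) Lagrangian at $(\mathbf{x}^*,\boldsymbol{\eta}^*)$ is the block diagonal matrix $\nabla_{\mathbf{x}\mathbf{x}}^2\boldsymbol{\mathcal{L}}_0(\mathbf{x}^*,\boldsymbol{\eta}^*) = \textmd{diag}(\nabla^2 f_i(x^*) + \psi_i^*\nabla^2 h_i(x^*),\ i=1,\ldots,N)$, and by Proposition \ref{pro_02181835}, $\textmd{TC}(\mathbf{x}^*,\boldsymbol{\Omega}) = \{\mathds{1}\otimes z \mid z \in \textmd{TC}(x^*,\Omega)\}$. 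For any $\mathbf{z} = \mathds{1}\otimes z$ in this tangent cone,
$$\mathbf{z}'\nabla_{\mathbf{x}\mathbf{x}}^2\boldsymbol{\mathcal{L}}_0(\mathbf{x}^*,\boldsymbol{\eta}^*)\mathbf{z} = z'\sum_{i=1}^N\left(\nabla^2 f_i(x^*)+\psi_i^*\nabla^2 h_i(x^*)\right)z,$$
which is positive by hypothesis. Therefore the second-order sufficiency hypothesis of Theorem \ref{pro_03061521} holds, and Proposition \ref{pro_03051237} provides an $\bar{c}$ making $\nabla_{\mathbf{x}\mathbf{x}}^2\boldsymbol{\mathcal{L}}_{\bar{c}}(\mathbf{x}^*,\boldsymbol{\eta}^*)$ positive definite; this in turn defines the matrix $\mathbf{M}$ in (\ref{equ_03061931}) and its eigenvalues $\mathbf{e}_1,\ldots,\mathbf{e}_{nN}$.

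Then I would translate the initial-condition hypothesis. Because $\mathbf{J}$ is the orthogonal projector on $\textmd{Null}(\mathbf{S}')$ and $\boldsymbol{\lambda}^*\in\textmd{Range}(\mathbf{S})$, one has $\mathbf{J}\boldsymbol{\lambda}^*=0$. Hence
$$\|\mathbf{T}\boldsymbol{\eta}_0-\boldsymbol{\eta}^*\|^2 = \|\boldsymbol{\mu}_0-\boldsymbol{\mu}^*\|^2 + \|(\mathbf{I}-\mathbf{J})\boldsymbol{\lambda}_0-\boldsymbol{\lambda}^*\|^2 = \|\boldsymbol{\mu}_0-\psi^*\|^2 + \|(\mathbf{I}-\mathbf{J})\boldsymbol{\lambda}_0-\boldsymbol{\lambda}^*\|^2,$$
which is exactly the quantity bounded by $\delta_1 c_0$ in hypothesis (\ref{equ_03071833}). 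So the standing assumption (\ref{equ_03062005}) of Theorem \ref{pro_03061521} is met verbatim, and the sequence generated by (\ref{equ_03251133})--(\ref{equ_03251135}) is precisely the sequence (\ref{equ_03062006}) applied to $(\texttt{P}_2)$.

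Finally, I would read off the conclusions. Theorem \ref{pro_03061521} yields $\|\boldsymbol{\eta}_k-\boldsymbol{\eta}^*\|\to 0$ and $\|\mathbf{x}(\boldsymbol{\eta}_k,c_k)-\mathbf{x}^*\|\to 0$. Since $\mathbf{x}_k = (x_{i,k})_{i=1}^N$ and $\mathbf{x}^* = \mathds{1}\otimes x^*$, the first convergence gives $\|x_{i,k}-x^*\|\to 0$ for every $i$; similarly $\boldsymbol{\mu}_k\to\psi^*$ componentwise yields $\mu_{i,k}\to\psi_i^*$. The bound (\ref{equ_03062013}) with $c_{\textmd{max}}<\infty$ gives a uniform contraction factor strictly less than one, so the convergence is linear, which transfers directly to $\|\mu_{i,k}-\psi_i^*\|$ by picking off components. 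The main obstacle I anticipate is checking that no hidden regularity assumption on $\mathbf{x}^*$ has been used in Theorem \ref{pro_03061521}; the discussion following Proposition \ref{pro_03051249} makes it clear that the operator $\mathbf{T}$ and the upper bound $c_{\textmd{max}}$ were introduced exactly to dispense with such regularity, so this bookkeeping should go through cleanly.
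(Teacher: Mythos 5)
Your proposal follows essentially the same route as the paper's proof: invoke Propositions \ref{pro_14411114}, \ref{equ_02111222} and \ref{pro_02181835} to verify the hypotheses of Theorem \ref{pro_03061521} on the lifted problem, then read off componentwise convergence. The one step you assert rather than justify is that the algorithm's iteration (\ref{equ_03251133})--(\ref{equ_03251135}) ``is precisely'' the recursion (\ref{equ_03062006}): it is not, literally, because (\ref{equ_03251135}) updates $\boldsymbol{\lambda}_{k+1}=\boldsymbol{\lambda}_k+c_k\mathbf{S}\mathbf{x}_k$ without the projection $\mathbf{T}$, so the raw $\boldsymbol{\lambda}_k$ sequence differs from the theorem's by a component in $\textmd{Null}(\mathbf{S}')$ that is never removed. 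The paper closes this by the change of variables $\bar{\boldsymbol{\lambda}}_k=(\mathbf{I}-\mathbf{J})\boldsymbol{\lambda}_k$, observing that $\mathbf{S}'\boldsymbol{\lambda}_k=\mathbf{S}'\bar{\boldsymbol{\lambda}}_k$ (so the minimization step is unchanged) and that $\bar{\boldsymbol{\lambda}}_{k+1}=(\mathbf{I}-\mathbf{J})\bar{\boldsymbol{\lambda}}_k+c_k\mathbf{S}\mathbf{x}_k$, which is exactly (\ref{equ_03062006}); adding this observation makes your argument complete.
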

\begin{IEEEproof}
By Proposition \ref{equ_02111222} we have that $\boldsymbol{\mu}^*=\psi^*$.
Using the definition of the Lagrangian function introduced in (\ref{equ_02111445}), we have $\nabla_{\mathbf{x}\mathbf{x}}^2\boldsymbol{\mathcal{L}}
\left(\mathbf{x}^*,\boldsymbol{\mu}^*,\boldsymbol{\lambda}^*\right)=
\textmd{diag}\left(\nabla^2f_i(x^*)+\mu_i^*\nabla^2h_i(x^*),i=1,\ldots,N\right).$
In Proposition \ref{pro_02181835} we showed that $\textmd{TC}(\mathbf{x}^*,\boldsymbol{\Omega}) = \left\{\mathds{1}\otimes h \ |\ h\in \textmd{TC}(x^*,\Omega)\right\},$
and therefore the assumption $z'\sum_{i=1}^N\left[\nabla^2f_i(x^*)+\mu_i^*\nabla^2h_i(x^*)\right]z>0$ for all $x\in \textmd{TC}(x^*,\Omega)$ is equivalent to $\mathbf{x}'\nabla_{\mathbf{x}\mathbf{x}}^2\boldsymbol{\mathcal{L}}_0
\left(\mathbf{x}^*,\boldsymbol{\mu}^*,\boldsymbol{\lambda}^*\right)\mathbf{x}> 0\ \forall \mathbf{x}\in \textmd{TC}(\mathbf{x}^*,\boldsymbol{\Omega}).$
Pick $\bar{c}$ such that $\nabla_{\mathbf{x}\mathbf{x}}^2\boldsymbol{\mathcal{L}}_{\bar{c}}
\left(\mathbf{x}^*,\boldsymbol{\mu}^*,\boldsymbol{\lambda}^*\right)\succ 0$ and $\bar{c}>\max\{-2{\mathbf{e}}_1,\ldots,-2{\mathbf{e}}_{nN}\}$. Let $c_{\textmd{max}}$, $\delta$ and $\delta_1$ as in Proposition \ref{pro_03051249}. As before, we make the following transformation: $\bar{\boldsymbol{\lambda}}_k = (\mathbf{I}-\mathbf{J})\boldsymbol{\lambda}_k$, where $\mathbf{J}$ is the projection operator on $\textmd{Null}(\mathbf{S}')$. Making the observation that $\mathbf{S}' = \mathbf{S}'(\mathbf{I}-\mathbf{J})$ and that $(\mathbf{I}-\mathbf{J})\bar{\boldsymbol{\lambda}}_k=(\mathbf{I}-\mathbf{J})\boldsymbol{\lambda}_k$, iterations (\ref{equ_03251133})-(\ref{equ_03251135}) become
{\small \begin{eqnarray}
\label{equ_03251148}
\mathbf{x}_{k} &=& \arg \min_{\bf{x}} \boldsymbol{\mathcal{L}}_{c_k} (\mathbf{x},\boldsymbol{\mu}_k,\bar{\boldsymbol{\lambda}}_k),\ \mathbf{x}_{0} = \mathbf{x}^0,\\
\label{equ_03251149}
\boldsymbol{\mu}_{k+1} &=&\boldsymbol{\mu}_{k}+c_k \mathbf{h}(\mathbf{x}_{k}),\ \boldsymbol{\mu}_{0}=\boldsymbol{\mu}^0,\\
\label{equ_03251150}
\bar{\boldsymbol{\lambda}}_{k+1} &=& (\mathbf{I}-\mathbf{J})\bar{\boldsymbol{\lambda}}_{k}+c_k \mathbf{S}\mathbf{x}_k,\ \bar{\boldsymbol{\lambda}}_{0}=\bar{\boldsymbol{\lambda}}^0.
\end{eqnarray}}
that are exactly the iterations found in Theorem \ref{pro_03061521}. All assumptions of Theorem \ref{pro_03061521} are satisfied and the result follows.
\end{IEEEproof}
\begin{remark}
At each step of Algorithm $(\texttt{A}_3)$, we use iteration (\ref{equ_03121344}) to obtain the solution of (\ref{equ_02091554}), and therefore the convergence is dependent on solving
{\small \begin{eqnarray}
\nonumber
\min & \boldsymbol{\mathcal{L}}_c(\mathbf{x},\boldsymbol{\eta}_k)\\
\nonumber
\textmd{subject to } & \mathbf{x}\in \mathcal{S}(\mathbf{x}^*;\varepsilon).
\end{eqnarray}}
The solution of the above problem is well defined if it is "close enough" to the local minimizer $\mathbf{x}^*$. However, the unconstrained optimization problem may have multiple local minimizers. Thus for the algorithm to converge to the correct solution, $\mathbf{x}_k$ must remain in a neighborhood of the same local minimizer, at least after some time instant $k$.
Practice  showed that using $\mathbf{x}_k$ as starting point in (\ref{equ_03121344}) to compute $\mathbf{x}_{k+1}$ tends to ensure that the solutions of the unconstrained optimization problems remain in a neighborhood of the same local minimizer. In addition to starting closed enough from $\textbf{x}^*$, appropriate step-sizes $\alpha_{\tau}$ must be used so that (\ref{equ_03121344}) converges. A "sufficiently small" constant sequence ($\alpha_{\tau}=\alpha$) or a slowly diminishing sequence ($\alpha_{\tau}\rightarrow 0, \sum_{\tau}\alpha_{\tau} = \infty$) can be chosen. Conditions on the stepsize sequence that ensure convergence can be found in \cite{Ber99} (Propositions 1.2.3 and 1.2.4).$\Box$
\end{remark}
\begin{remark}
\label{rem_02111900}
Corollary \ref{cor_03122121} shows that we can use the method of multipliers to compute a local minimizer for the Problem $(P_1)$. Note the change in the condition the initial values $\boldsymbol{\eta}_0$ must satisfy, compared to the original result, namely the projection of $\boldsymbol{\lambda}_0$ on Range(\textbf{S}). This change was necessary as a result of the lack of regularity of the local minimizer; lack of regularity that also prevented us from showing that Algorithm $(A_1)$ can achieve a (theoretical) superlinear rate of convergence due to the upper-bound imposed on  the sequence $\{c_k\}$.$\Box$
\end{remark}

Although we assumed the sequence $\{c_k\}$ to be globally known by all agents, there are simple strategies to choosing such sequences that do not require significant initial communication overhead. For example, each agent can compute $c_k$  according to the scheme $c_{k+1} = \max\{\beta c_k, c_{\textmd{max}}\}$ for some scalar $\beta>1$, and where only the initial value  $c_0$, the upper-bound $c_{\textmd{max}}$ and the scalar $\beta$ must be known by all agents. 
 As seen earlier, the method of multipliers involves solving at each time instant the unconstrained optimization problem (\ref{equ_02091554}). To solve this step, we use a gradient method (iteration (\ref{equ_03121344})), since, due to the nature of the cost function, it can be implemented in a  distributed manner. In practice, the step (\ref{equ_02091554}) is not solved exactly, and usually the iteration (\ref{equ_03121344}) is stopped as some stopping criterion is satisfied. Section 2.5 of \cite{Bertsekas1982} introduces the ``asymptotically exact minimization in methods of multipliers'', which basically shows several strategies for solving approximately  the step  (\ref{equ_02091554}), and still obtain convergence to a local minimizer.

\bibliographystyle{plain}
\bibliography{references}

\appendix

\subsection{Proofs of the supporting results for the distributed algorithm based on the Lagrangian methods}
\label{sec_02191015}
This section includes the missing proofs of the supporting results in Section \ref{sec_12011443} plus the auxiliary results necessary to achieve this goal. We start with a well known result on the properties of the tangent cone to the constraint set at a local minimizer of $(\texttt{P}_1)$.
\begin{proposition}
\label{pro_02110939}
Let Assumptions \ref{assu_02091524}-(a) and \ref{assu_02091703} hold, let $x^*$ be a local minimizer of $(\texttt{P}_1)$ and let $\Omega$ denote the constraint set, that is, $\Omega = \{x\ |\ h(x)=0\}$. Then the tangent cone to $\Omega$ at $x^*$ is given by
$\textmd{TC}(x^*,\Omega) = \textmd{Null}\left(\nabla h(x^*)'\right),$
where $\nabla h\left(x^*\right)\triangleq \left[\nabla h_1\left(x^*\right),\nabla h_2\left(x^*\right),\ldots,\nabla h_N\left(x^*\right)\right]$.
\end{proposition}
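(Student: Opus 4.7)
The plan is to establish the set equality by proving the two inclusions separately. The inclusion $\textmd{TC}(x^*,\Omega) \subseteq \textmd{Null}(\nabla h(x^*)')$ is the easy direction and holds without Assumption \ref{assu_02091703}. Take any tangent vector $d$; by definition there exist $\{x_k\} \subset \Omega$ with $x_k \to x^*$ and positive scalars $t_k \to 0$ such that $(x_k - x^*)/t_k \to d$. Since $h(x_k) = h(x^*) = 0$ and $h$ is continuously differentiable, the first-order Taylor expansion gives $\nabla h(x^*)'(x_k - x^*) = o(\|x_k - x^*\|)$. Dividing by $t_k$ and letting $k \to \infty$ yields $\nabla h(x^*)' d = 0$.

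The reverse inclusion $\textmd{Null}(\nabla h(x^*)') \subseteq \textmd{TC}(x^*,\Omega)$ is the substantive part and is where Assumption \ref{assu_02091703} enters. Given $d$ with $\nabla h(x^*)'d = 0$, the plan is to invoke the implicit function theorem to construct a smooth feasible curve tangent to $d$ at $x^*$. Define $\Phi : \mathds{R} \times \mathds{R}^N \to \mathds{R}^N$ by $\Phi(t, u) = h\bigl(x^* + t d + \nabla h(x^*) u\bigr)$. Then $\Phi(0,0) = 0$, and the Jacobian of $\Phi$ with respect to $u$ at $(0,0)$ equals $\nabla h(x^*)' \nabla h(x^*)$, which is invertible because Assumption \ref{assu_02091703} says the columns of $\nabla h(x^*)$ are linearly independent. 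The implicit function theorem then produces a continuously differentiable function $u(t)$ defined in a neighborhood of $0$ with $u(0)=0$ and $\Phi(t,u(t))=0$.

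Differentiating the identity $\Phi(t,u(t)) = 0$ at $t=0$ gives $\nabla h(x^*)' d + \nabla h(x^*)' \nabla h(x^*)\, u'(0) = 0$, and since $\nabla h(x^*)' d = 0$ we conclude $u'(0) = 0$, so $u(t) = o(t)$. Setting $x(t) = x^* + t d + \nabla h(x^*) u(t)$ gives a feasible curve (i.e. $x(t) \in \Omega$ for $t$ near $0$) with $x(t) \to x^*$ and $(x(t)-x^*)/t \to d$ as $t \to 0^+$. Picking any positive sequence $t_k \to 0$ and letting $x_k = x(t_k)$ exhibits $d$ as an element of $\textmd{TC}(x^*,\Omega)$, completing the proof.

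The main obstacle is the reverse inclusion: constructing a feasible curve tangent to an arbitrary element of the null space. This is handled cleanly by the implicit function theorem once Assumption \ref{assu_02091703} is invoked to guarantee invertibility of $\nabla h(x^*)' \nabla h(x^*)$; without this full-rank hypothesis the result may fail, as standard counterexamples in nonlinear programming show.
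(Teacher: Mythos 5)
Your proof is correct: the forward inclusion via Taylor expansion and the reverse inclusion via the implicit function theorem applied to $\Phi(t,u)=h\bigl(x^*+td+\nabla h(x^*)u\bigr)$, using Assumption \ref{assu_02091703} to invert $\nabla h(x^*)'\nabla h(x^*)$, is exactly the standard regularity argument. The paper states this proposition without proof as a well-known result (it is essentially the classical tangent-cone characterization at a regular point, cf.\ Proposition 3.3.1 of \cite{Ber99}), and your argument supplies precisely that standard proof.
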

Let $\mathbf{x}^*=\mathds{1}\otimes x^*$ denote  a  local minimizer of $(\texttt{P}_2)$ and let $\nabla \mathbf{h}(\mathbf{x}^*)$ denote the matrix
$\nabla \mathbf{h}(\mathbf{x}^*) \triangleq \left[\nabla \mathbf{h}_1(\mathbf{x}^*),\nabla \mathbf{h}_2(\mathbf{x}^*),\ldots,\nabla \mathbf{h}_N(\mathbf{x}^*)\right].$
The vectors $\nabla\mathbf{h}_i(\mathbf{x}^*)$ are the gradients of the functions $\mathbf{h}_i(\mathbf{x})$  at $\mathbf{x}^*$ with a structure given by
{\small
 \begin{equation}
 \label{equ_02091847}
\nabla \mathbf{h}_i(\mathbf{x}^*)'=\left[\underbrace{0,\ldots,0}_{n \textmd{ zeros}},\ldots,\underbrace{0,\ldots,0}_{n \textmd{ zeros}},\underbrace{\nabla h_i(x^*)'}_{i^{\textmd{th}}\ \textmd{component}},\underbrace{0,\ldots,0}_{n \textmd{ zeros}},\ldots,\underbrace{0,\ldots,0}_{n \textmd{ zeros}}\right],
 \end{equation}}
 as per definition of the function $\mathbf{h}_i(\mathbf{x})$.

 The second result of this section is concerned with the nullspace of the matrix $\left[\nabla \mathbf{h}(\mathbf{x}^*),\mathbf{S}'\right]$, which will be used to characterize the tangent cone at a local minimizer of $(\texttt{P}_2)$.
\begin{proposition}
\label{pro_02091833}
Let Assumptions \ref{assu_02091524} and \ref{assu_02091703} hold. The nullspace of the matrix $\left[\nabla \mathbf{h}(\mathbf{x}^*),\mathbf{S}'\right]$ is given by
$\textmd{Null}\left(\left[\nabla \mathbf{h}(\mathbf{x}^*),\mathbf{S}'\right]\right)=
\left\{\left(\mathbf{0}',\mathbf{v}'\right)' \ |\ \mathbf{v}\in \textmd{Null}\left(\mathbf{S}'\right)\right\}.$
\end{proposition}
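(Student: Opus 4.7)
The plan is to prove the two inclusions separately. The easy direction ($\supseteq$) is immediate: if $\mathbf{u} = \mathbf{0}$ and $\mathbf{v} \in \textmd{Null}(\mathbf{S}')$, then $[\nabla\mathbf{h}(\mathbf{x}^*),\mathbf{S}'](\mathbf{0}',\mathbf{v}')' = \nabla\mathbf{h}(\mathbf{x}^*)\mathbf{0} + \mathbf{S}'\mathbf{v} = 0$. So the content lies in the reverse inclusion.

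For the $\subseteq$ direction, suppose $(\mathbf{u}',\mathbf{v}')'$ lies in the nullspace, i.e.\ $\nabla\mathbf{h}(\mathbf{x}^*)\mathbf{u} + \mathbf{S}'\mathbf{v} = 0$. The main step is to show $\mathbf{u} = \mathbf{0}$; once that is in hand, $\mathbf{S}'\mathbf{v}=0$ is automatic. To decouple $\mathbf{u}$ from $\mathbf{v}$, I exploit the fact that $\textmd{Range}(\mathbf{S}') = \textmd{Null}(\mathbf{S})^{\perp}$, and by Proposition \ref{pro_10151123}(b), $\textmd{Null}(\mathbf{S}) = \{\mathds{1}\otimes y \mid y\in\mathds{R}^n\}$. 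So pairing the nullspace equation against an arbitrary vector $\mathds{1}\otimes y$ annihilates the $\mathbf{S}'\mathbf{v}$ term.

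Using the block structure (\ref{equ_02091847}) of each column $\nabla\mathbf{h}_i(\mathbf{x}^*)$, the $i$-th $n$-dimensional block of $\nabla\mathbf{h}(\mathbf{x}^*)\mathbf{u}$ equals $u_i\nabla h_i(x^*)$. Hence $(\mathds{1}\otimes y)'\nabla\mathbf{h}(\mathbf{x}^*)\mathbf{u} = \sum_{i=1}^N u_i\, y'\nabla h_i(x^*) = y'\nabla h(x^*)\mathbf{u}$. Since this must equal zero for every $y\in\mathds{R}^n$, we get $\nabla h(x^*)\mathbf{u} = 0$. Assumption \ref{assu_02091703} then forces $\mathbf{u}=\mathbf{0}$, after which $\mathbf{S}'\mathbf{v}=0$ finishes the proof.

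I do not anticipate a real obstacle here; the argument is essentially a bookkeeping step combining the block-diagonal sparsity pattern of $\nabla\mathbf{h}(\mathbf{x}^*)$ with the description of $\textmd{Null}(\mathbf{S})$ from Proposition \ref{pro_10151123}. The only thing that needs care is making sure the full column rank of $\nabla h(x^*)$ (Assumption \ref{assu_02091703}) is invoked to go from $\nabla h(x^*)\mathbf{u}=0$ to $\mathbf{u}=0$; this is precisely what the regularity of $x^*$ for the \emph{unlifted} problem $(\texttt{P}_1)$ buys us, even though $\mathbf{x}^*$ itself fails to be regular for $(\texttt{P}_2)$.
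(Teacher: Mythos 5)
Your proof is correct and follows essentially the same route as the paper's: the paper sums the $n$-dimensional block equations $\nabla h_i(x^*)u_i+\sum_{j\in\mathcal{N}_i}(s_{ij}v_{ij}-s_{ji}v_{ji})=0$ over $i$ (which is exactly your pairing against $\mathds{1}\otimes y$, with the edge terms cancelling because $\mathds{1}\otimes y\in\textmd{Null}(\mathbf{S})$), arrives at $\nabla h(x^*)\mathbf{u}=0$, and invokes Assumption \ref{assu_02091703} to conclude $\mathbf{u}=0$. Your write-up is slightly more explicit about the orthogonality mechanism and the trivial $\supseteq$ inclusion, but there is no substantive difference.
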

\begin{IEEEproof}
Let $\mathbf{u}\in \mathds{R}^N$ and $\mathbf{v}\in \mathds{R}^{nN}$ be two vectors. To characterize the nullspace of $\left[\nabla \mathbf{h}(\mathbf{x}^*),\mathbf{S}'\right]$ we need to check for what values of $\mathbf{u}$ and $\mathbf{v}$ the equation
\begin{equation}
\label{equ_02091845}
\nabla \mathbf{h}(\mathbf{x}^*) \mathbf{u}+ \mathbf{S}' \mathbf{v}=\mathbf{0}
\end{equation}
is satisfied. Using the definition of $\nabla \mathbf{h}_i(\mathbf{x}^*)$ shown in (\ref{equ_02091847}), equation (\ref{equ_02091845}) can be equivalently written as
$$\nabla {h}_i({x}^*)\mathbf{u}_i+\sum_{j\in \mathcal{N}_i}\left(s_{ij}v_{ij}-s_{ji}{v}_{ji}\right)=0, \ i=1,\ldots,N,$$
where $\mathbf{u} = (\mathbf{u}_i)$ with $\mathbf{u}_i \in \mathds{R}$ and $i=1,\ldots,N$, and $\mathbf{v} = (\mathbf{v}_i)$ with $\mathbf{v}_i\in \mathds{R}^{n|\mathcal{N}_i|}$ and $\mathbf{v}_i = (v_{ij})$ with $v_{ij}\in \mathds{R}^n$ and $j\in \mathcal{N}_i$.
Summing the above equations over $i$ we obtain that
$$\sum_{i=1}^N\mathbf{u}_i\nabla h_i(x^*)=0,$$
and since $\nabla h(x^*)$ is assumed full rank we must have that $\mathbf{u}=0$ and the result follows.
\end{IEEEproof}

We now have all the machinery necessary to characterize the tangent cone at a local minimizer of $(\texttt{P}_2)$.

\begin{IEEEproof}[Proof of Proposition \ref{pro_02181835}]
All we have to show is that any vector in $\textmd{Null}\left(\left[\nabla \mathbf{h}(\mathbf{x}^*),\mathbf{S}'\right]'\right)$ belongs to $\textmd{TC}\left(\mathbf{x}^*,\boldsymbol{\Omega}\right)$ as well, since it is well known that (the closure of the convex hull of) $\textmd{TC}\left(\mathbf{x}^*,\boldsymbol{\Omega}\right)$ is included in $\textmd{Null}\left(\left[\nabla \mathbf{h}(\mathbf{x}^*),\mathbf{S}'\right]'\right)$.
Let $\mathbf{u}$ be a vector in $\textmd{Null}\left(\left[\nabla \mathbf{h}(\mathbf{x}^*),\mathbf{S}'\right]'\right)$ and therefore it must satisfy
\begin{equation}
\label{equ_02111112}
\nabla \mathbf{h}(\mathbf{x}^*)'\mathbf{u}=0 \textmd{ and } \mathbf{S}\mathbf{u}=0.
\end{equation}
From the second equation of (\ref{equ_02111112}), $\mathbf{u}$ must be of the form
$\mathbf{u}=\mathds{1}\otimes u$, for some $u\in \mathds{R}^n$. From the first equation of (\ref{equ_02111112}), using the definition of $\nabla \mathbf{h}_i(\mathbf{x}^*)$ in (\ref{equ_02091847}) together with the particular structure of $\mathbf{u}$, we obtain that
$$\nabla h_i(x^*)'u=0 \ \forall i=1,\ldots,N,$$
or equivalently
\begin{equation*}
\label{equ_02111120}
u\in \textmd{Null}\left(\nabla h(x^*)'\right).
\end{equation*}
We need to show that a vector $\mathbf{u}=\mathds{1}\otimes u$, with $u\in \textmd{Null}\left(\nabla h(x^*)'\right)$ belongs to $\textmd{TC}\left(\mathbf{x}^*,\boldsymbol{\Omega}\right)$. More explicitly, using the definition of the tangent cone, we must find a function $\mathbf{o}:\mathds{R}\rightarrow \mathds{R}^{nN}$, with $\lim_{t\rightarrow 0, t>0}\frac{\mathbf{o}(t)}{t}=0$, so that $$\mathbf{x}^* + t \mathbf{u}+\mathbf{o}(t)\in \boldsymbol{\Omega}\ \forall t>0.$$
Choosing $\mathbf{o}(t) = \mathds{1}_N\otimes o(t)$, where $o:\mathds{R}\rightarrow \mathds{R}^n$ is a function so that $\lim_{t\rightarrow 0, t>0}\frac{{o}(t)}{t}=0$, we note that
$$\mathbf{g}\left(\mathbf{x}^* + t \mathbf{u}+\mathbf{o}(t)\right)=0\ \forall t>0,$$
and therefore, all we are left to do is to check that
\begin{equation}
\label{equ_02111129}
\mathbf{h}\left(\mathbf{x}^* + t \mathbf{u}+\mathbf{o}(t)\right)=0\ \forall t>0,
\end{equation}
as well. Making the observation that
$\mathbf{x}^* + t \mathbf{u}+\mathbf{o}(t) = \mathds{1}\otimes \left(x^*+t u +o(t)\right)$, (\ref{equ_02111129}) is equivalent to showing that
\begin{equation}
\label{equ_02111133}
h\left(x^*+t u +o(t)\right)=0\ \forall t>0.
\end{equation}
However, we showed previously that $u\in \textmd{Null}\left(\nabla h(x^*)'\right)$, and therefore by Proposition \ref{pro_02091833} $u\in TC(x^*,\Omega)$, as well. Therefore there exits a function $o(t)$ so that (\ref{equ_02111133}) is satisfied, which shows that indeed
$$\textmd{TC}\left(\mathbf{x}^*,\boldsymbol{\Omega}\right) = \textmd{Null}\left(\left[\nabla \mathbf{h}(\mathbf{x}^*),\mathbf{S}'\right]'\right),$$
and consequently $\textmd{TC}\left(\mathbf{x}^*,\boldsymbol{\Omega}\right)$ is a closed and convex subspace.
\end{IEEEproof}

Let $\mathbf{x}^*=\mathds{1}\otimes x^*$ denote  a  local minimizer of $(\texttt{P}_2)$. From the theory concerning optimization problems with equality constraints (see for example Chapter 3, page 15 of \cite{Varaiya72}, or Chapter 3, page 253 of \cite{Ber99}), the first order necessary conditions for $(P_2)$ ensure the existence of $\lambda_0^*\in \mathds{R}$, $\boldsymbol{\mu}^*\in \mathds{R}^N$ and $\boldsymbol{\lambda}^*\in \mathds{R}^{n\bar{N}}$ so that $\lambda_0^*\nabla \mathbf{F}(\mathbf{x}^*) + \nabla \mathbf{h}(x^*)\boldsymbol{\mu}^*+\mathbf{S}'\boldsymbol{\lambda}^*=0$. Since $\mathbf{S}$ is not full rank, and therefore the matrix $\left[\nabla \mathbf{h}(\mathbf{x}^*),\mathbf{S}'\right]$ is not full rank either, the uniqueness of $\boldsymbol{\mu}^*$ and $\boldsymbol{\lambda}^*$ cannot be guaranteed. The following result characterizes the set of Lagrange multipliers verifying the first order necessary conditions of $(\texttt{P}_2)$.

\begin{proposition}
\label{lem_18011114}
Let Assumptions \ref{assu_02091524} and \ref{assu_02091703} hold and let $\mathbf{x}^*=\mathds{1}\otimes x^*$ be a local minimizer for problem $(\texttt{P}_2)$. There exist unique vectors $\boldsymbol{\mu}^*$ and $\boldsymbol{\lambda}^*\in \textmd{Range}(\mathbf{S})$ so that $\nabla\mathbf{F}(\mathbf{x}^*)+\nabla \mathbf{h}(\mathbf{x}^*)\boldsymbol{\mu}^*+\mathbf{S}' \boldsymbol{\lambda} = 0$ for all $\boldsymbol{\lambda}\in \left\{\boldsymbol{\lambda}^*+\boldsymbol{\lambda}_{\perp}\ |\ \boldsymbol{\lambda}_{\perp}\in \textmd{Null}\left(\mathbf{S}'\right) \right\}$.
\end{proposition}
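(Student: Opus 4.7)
The plan is to combine the first-order necessary optimality condition for $(\texttt{P}_2)$ with the tangent cone characterization established in Proposition \ref{pro_02181835} and the null space characterization in Proposition \ref{pro_02091833}. For existence, since $\mathbf{x}^*$ is a local minimizer of $(\texttt{P}_2)$, the gradient $\nabla \mathbf{F}(\mathbf{x}^*)$ must be orthogonal to $\textmd{TC}(\mathbf{x}^*, \boldsymbol{\Omega})$. By Proposition \ref{pro_02181835}, the tangent cone equals $\textmd{Null}\bigl([\nabla\mathbf{h}(\mathbf{x}^*),\mathbf{S}']'\bigr)$, so its orthogonal complement is $\textmd{Range}\bigl([\nabla\mathbf{h}(\mathbf{x}^*),\mathbf{S}']\bigr)$. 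Thus there exist $\boldsymbol{\mu}^* \in \mathds{R}^N$ and $\tilde{\boldsymbol{\lambda}} \in \mathds{R}^{n\bar{N}}$ with $-\nabla \mathbf{F}(\mathbf{x}^*) = \nabla \mathbf{h}(\mathbf{x}^*)\boldsymbol{\mu}^* + \mathbf{S}'\tilde{\boldsymbol{\lambda}}$.

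Next I would use the orthogonal decomposition $\mathds{R}^{n\bar{N}} = \textmd{Range}(\mathbf{S}) \oplus \textmd{Null}(\mathbf{S}')$ to write $\tilde{\boldsymbol{\lambda}} = \boldsymbol{\lambda}^* + \boldsymbol{\lambda}_\perp$ with $\boldsymbol{\lambda}^* \in \textmd{Range}(\mathbf{S})$ and $\boldsymbol{\lambda}_\perp \in \textmd{Null}(\mathbf{S}')$. Since $\mathbf{S}'\boldsymbol{\lambda}_\perp = 0$, the pair $(\boldsymbol{\mu}^*,\boldsymbol{\lambda}^*)$ already satisfies the stationarity equation; moreover, adding any element of $\textmd{Null}(\mathbf{S}')$ to $\boldsymbol{\lambda}^*$ leaves the term $\mathbf{S}'\boldsymbol{\lambda}$ unchanged, so the identity holds for every $\boldsymbol{\lambda}$ in the affine set $\boldsymbol{\lambda}^* + \textmd{Null}(\mathbf{S}')$, which is exactly the claim of the proposition.

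For uniqueness, I would suppose that two pairs $(\boldsymbol{\mu}_1, \boldsymbol{\lambda}_1)$ and $(\boldsymbol{\mu}_2, \boldsymbol{\lambda}_2)$ with $\boldsymbol{\lambda}_1, \boldsymbol{\lambda}_2 \in \textmd{Range}(\mathbf{S})$ satisfy the stationarity equation. Subtracting yields $\nabla\mathbf{h}(\mathbf{x}^*)(\boldsymbol{\mu}_1-\boldsymbol{\mu}_2) + \mathbf{S}'(\boldsymbol{\lambda}_1-\boldsymbol{\lambda}_2) = 0$, so $(\boldsymbol{\mu}_1-\boldsymbol{\mu}_2,\boldsymbol{\lambda}_1-\boldsymbol{\lambda}_2) \in \textmd{Null}\bigl([\nabla\mathbf{h}(\mathbf{x}^*),\mathbf{S}']\bigr)$. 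By Proposition \ref{pro_02091833} this null space is $\{(\mathbf{0}',\mathbf{v}')' : \mathbf{v}\in \textmd{Null}(\mathbf{S}')\}$, which forces $\boldsymbol{\mu}_1 = \boldsymbol{\mu}_2$ and $\boldsymbol{\lambda}_1 - \boldsymbol{\lambda}_2 \in \textmd{Null}(\mathbf{S}')$. Combined with $\boldsymbol{\lambda}_1-\boldsymbol{\lambda}_2 \in \textmd{Range}(\mathbf{S})$ and the fact that $\textmd{Range}(\mathbf{S}) \cap \textmd{Null}(\mathbf{S}') = \{\mathbf{0}\}$, this gives $\boldsymbol{\lambda}_1 = \boldsymbol{\lambda}_2$.

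The main obstacle is the existence step, because the classical Lagrange multiplier theorem relies on a constraint qualification that fails here, as the Jacobian $[\nabla\mathbf{h}(\mathbf{x}^*),\mathbf{S}']$ is rank deficient (indeed $\mathbf{S}$ is not full rank). The rescue is that Proposition \ref{pro_02181835} shows the tangent cone equals the linearized tangent cone, which is precisely the Abadie-type constraint qualification needed to guarantee the existence of Lagrange multipliers; once this is in hand, the rest of the argument reduces to elementary linear algebra about orthogonal decompositions.
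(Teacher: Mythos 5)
Your proposal is correct and follows essentially the same route as the paper: existence via orthogonality of $\nabla\mathbf{F}(\mathbf{x}^*)$ to $\textmd{TC}(\mathbf{x}^*,\boldsymbol{\Omega})$ combined with the tangent-cone characterization of Proposition \ref{pro_02181835} (the paper invokes Lemma 1 of \cite{Varaiya72} and notes in a footnote that orthogonality, rather than just $h'\nabla\mathbf{F}(\mathbf{x}^*)\geq 0$, follows because the tangent cone is a subspace), then the orthogonal decomposition $\mathds{R}^{n\bar{N}}=\textmd{Range}(\mathbf{S})\oplus\textmd{Null}(\mathbf{S}')$, and uniqueness via Proposition \ref{pro_02091833} exactly as you describe. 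Your closing remark correctly identifies why the standard regularity-based multiplier theorem is unavailable and why the tangent-cone equality is the substitute the paper relies on.
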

\begin{IEEEproof}
By Lemma 1\footnote{The result states that given a local minimizer $x^*$ of a function $f(x)$,  $h'\nabla f({x}^*)\geq 0$ for all $h\in TC(x^*, \Omega)$. When $ TC(x^*, \Omega)$ is a (closed, convex) subspace, orthogonality follows.} , page 50 of \cite{Varaiya72}  we have that $\nabla\mathbf{F}(\mathbf{x}^*)$ is orthogonal on $\textmd{TC}\left(\mathbf{x}^*,\boldsymbol{\Omega}\right)$ and therefore, by Proposition \ref{pro_02181835}, $\nabla\mathbf{F}(\mathbf{x}^*)$ must belong to $\textmd{Range}\left(\left[\nabla \mathbf{h}(\mathbf{x}^*),\mathbf{S}'\right]\right)$. Consequently, there exist the vectors $\boldsymbol{\mu}^*$ and $\boldsymbol{\lambda}$ so that
\begin{equation}
\label{equ_16120128}
-\nabla\mathbf{F}(\mathbf{x}^*)=\nabla \mathbf{h}(\mathbf{x}^*)\boldsymbol{\mu}^*+  \mathbf{S}' \boldsymbol{\lambda}.
\end{equation}
Noting that $\mathds{R}^{nN}$ can be written as a direct sum between the nullspace of $\mathbf{S}'$ and the range of $\mathbf{S}$, there exist the orthogonal vectors $\boldsymbol{\lambda}^*\in \textmd{Range}\left(\mathbf{S}\right)$ and $\boldsymbol{\lambda}_{\perp}\in \textmd{Null}\left(\mathbf{S}'\right)$ so that $\boldsymbol{\lambda} = \boldsymbol{\lambda}^* + \boldsymbol{\lambda}_{\perp}$.
Note that we can replace $\boldsymbol{\lambda}_{\perp}$ by any vector in $\textmd{Null}\left(\mathbf{S}'\right)$ and (\ref{equ_16120128}) still holds. The only thing left to do is to prove the uniqueness of $\boldsymbol{\mu}^*$ and $\boldsymbol{\lambda}^*$. We use a contradiction argument. Let $\tilde{\boldsymbol{\mu}}\neq \boldsymbol{\mu}^*$ and $\tilde{\boldsymbol{\lambda}}\neq   \boldsymbol{\lambda}^*$ with $\tilde{\boldsymbol{\lambda}}\in \textmd{Range}\left(\mathbf{S}\right)$ be two vectors so that (\ref{equ_16120128}) is satisfied. Hence we have that
$$-\nabla\mathbf{F}(\mathbf{x}^*)=\nabla \mathbf{h}(\mathbf{x}^*){\boldsymbol{\mu}}^*+  \mathbf{S}' \boldsymbol{\lambda}^* \textmd{ and }-\nabla\mathbf{F}(\mathbf{x}^*)=\nabla \mathbf{h}(\mathbf{x}^*)\tilde{\boldsymbol{\mu}}+\mathbf{S}' \tilde{\boldsymbol{\lambda}},$$
and therefore
$$0 = \nabla \mathbf{h}(\mathbf{x}^*) \left(\boldsymbol{\mu}^*-\tilde{\boldsymbol{\mu}}\right)+ \mathbf{S}' \left(\boldsymbol{\lambda}^*-\tilde{\boldsymbol{\lambda}}\right).$$
By Proposition \ref{pro_02091833} we have that
$$\textmd{Null}\left(\left[\nabla \mathbf{h}(\mathbf{x}^*),\mathbf{S}'\right]\right)=
\left\{\left(\mathbf{0}',\mathbf{v}'\right)' \ |\ \mathbf{v}\in \textmd{Null}\left(\mathbf{S}'\right)\right\},$$
and therefore $\boldsymbol{\mu}^*=\tilde{\boldsymbol{\mu}}$ and $\boldsymbol{\lambda}^*=\tilde{\boldsymbol{\lambda}}$ since $\boldsymbol{\lambda}^*-\tilde{\boldsymbol{\lambda}}\in \textmd{Range}\left(\mathbf{S}\right)$, and the result follows.
\end{IEEEproof}

We can now proceed with the proofs of Proposition \ref{equ_02111222} and Lemma \ref{lem_17151116}.

\begin{IEEEproof}[Proof of Proposition \ref{equ_02111222}]

By Proposition \ref{lem_18011114}, there exist two unique vector $\boldsymbol{\mu}^*$ and $\boldsymbol{\lambda}^*\in \textmd{Range}(\mathbf{L})$ so that
$$\nabla\mathbf{F}(\mathbf{x}^*)+\nabla \mathbf{h}(\mathbf{x}^*)\boldsymbol{\mu}^*+\mathbf{S}' \boldsymbol{\lambda}^* = 0.$$
Using the structure of $\nabla\mathbf{F}(\mathbf{x}^*)$, $\mathbf{h}(\mathbf{x}^*)$ and $\mathbf{S}'$, the above equation can be equivalently expressed as
\begin{equation}
\label{equ_01120920}
\nabla f_i(x^*)+\boldsymbol{\mu}^*_i\nabla h_i(x^*)+\sum_{j\in \mathcal{N}_i}\left(s_{ij}{\lambda}^*_{ij}-
s_{ji}{\lambda}^*_{ji}\right),\ i=1,\ldots,N,
\end{equation}
where $\boldsymbol{\mu}_i^*$ are  the scalar entries of $\boldsymbol{\mu}^*$ and $\boldsymbol{\lambda}^*_i = (\lambda^*_{ij})$ are the $n|\mathcal{N}_i|$-dimensional sub-vectors of $\boldsymbol{\lambda}^*$.
Summing up equations (\ref{equ_01120920}) over $i$, we obtain
$$\sum_{i=1}^N\nabla f_i(x^*)+\sum_{i=1}^N \nabla h_i(x^*) \boldsymbol{\mu}_i^*=0.$$
Equivalently,
$$\nabla f(x^*)+\nabla h(x^*) \boldsymbol{\mu}^*=0,$$
which is just the first order necessary condition for $(P_1)$. But since $\boldsymbol{\mu}^*$ must be unique, it follows that $\boldsymbol{\mu}^* = {\psi}^*$.
\end{IEEEproof}

The convergence properties of the first two the distributed algorithms depend on the spectral properties of a particular matrix; properties analyzed in the following result.
\begin{lemma}
\label{lem_17151116}
Let Assumptions \ref{assu_02091524} and \ref{assu_02091703} hold, let $\alpha$ be a positive scalar, and let $\mathbf{x}^*$ be a local minimizer of $(P_2)$. Then the eigenvalues of the matrix
{\begin{equation*}
\small
\label{equ_02111425}
\mathbf{B} = \left(
\begin{array}{ccc}
\mathbf{H} & \nabla \mathbf{h}(\mathbf{x}^*) & \mathbf{S}'\\
-\nabla \mathbf{h}(\mathbf{x}^*)' & \mathbf{0} & \mathbf{0}\\
-\mathbf{J} & \mathbf{0} & \frac{1}{\alpha}\mathbf{J}
\end{array}
\right),
\end{equation*}}
have positive real parts, where $\mathbf{H}$ is a positive definite matrix and $\mathbf{J}$ is the orthogonal projection operator on $\textmd{Null}(\bf{S}')$.
\end{lemma}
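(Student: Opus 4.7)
The plan is to study the eigenvalue equation $\mathbf{B}\mathbf{v} = \lambda \mathbf{v}$ for a nonzero eigenvector $\mathbf{v} = (\mathbf{x},\mathbf{y},\mathbf{z})$ blockwise and reduce everything to a scalar Hermitian identity in which the positive definiteness of $\mathbf{H}$ forces $\textmd{Re}(\lambda)>0$. Writing the three block equations
\begin{eqnarray*}
\mathbf{H}\mathbf{x}+\nabla\mathbf{h}(\mathbf{x}^*)\mathbf{y}+\mathbf{S}'\mathbf{z} &=& \lambda\mathbf{x},\\
-\nabla\mathbf{h}(\mathbf{x}^*)'\mathbf{x} &=& \lambda\mathbf{y},\\
-\mathbf{J}\mathbf{x}+\tfrac{1}{\alpha}\mathbf{J}\mathbf{z} &=& \lambda\mathbf{z},
\end{eqnarray*}
the first observation is structural: the right-hand side of the third equation, $\lambda\mathbf{z}$, must lie in $\textmd{Range}(\mathbf{J})=\textmd{Null}(\mathbf{S}')$. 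Hence when $\lambda\neq 0$, necessarily $\mathbf{z}\in\textmd{Null}(\mathbf{S}')$, so $\mathbf{S}'\mathbf{z}=0$ and the term $\mathbf{S}'\mathbf{z}$ drops out of the first equation entirely. This is the key simplification that lets us ignore the third block from the Hermitian-inner-product computation.

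Next I would take the Hermitian inner product $\mathbf{x}^H(\cdot)$ of the reduced first equation and use the second equation to rewrite the cross term. From $\nabla\mathbf{h}(\mathbf{x}^*)'\mathbf{x}=-\lambda\mathbf{y}$, taking conjugate transpose gives $\mathbf{x}^H\nabla\mathbf{h}(\mathbf{x}^*)=-\bar{\lambda}\mathbf{y}^H$, whence $\mathbf{x}^H\nabla\mathbf{h}(\mathbf{x}^*)\mathbf{y}=-\bar{\lambda}\|\mathbf{y}\|^2$. The resulting identity is
$$\mathbf{x}^H\mathbf{H}\mathbf{x} = \lambda\|\mathbf{x}\|^2+\bar{\lambda}\|\mathbf{y}\|^2.$$
Taking real parts yields $\mathbf{x}^H\mathbf{H}\mathbf{x} = \textmd{Re}(\lambda)\bigl(\|\mathbf{x}\|^2+\|\mathbf{y}\|^2\bigr)$. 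Because $\mathbf{H}\succ 0$, whenever $\mathbf{x}\neq 0$ the left-hand side is strictly positive, immediately giving $\textmd{Re}(\lambda)>0$.

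The remaining work is to handle the two edge cases and close the argument. Case $\mathbf{x}=0$ with $\lambda\neq 0$: the second equation gives $\mathbf{y}=0$, the first reduces to $\mathbf{S}'\mathbf{z}=0$ so $\mathbf{z}\in\textmd{Null}(\mathbf{S}')$ and $\mathbf{J}\mathbf{z}=\mathbf{z}$, and then the third equation becomes $\lambda\mathbf{z}=\mathbf{z}/\alpha$; since $\mathbf{z}\neq 0$ (else $\mathbf{v}=0$), we obtain $\lambda=1/\alpha>0$. Case $\lambda=0$: the Hermitian identity forces $\mathbf{x}^H\mathbf{H}\mathbf{x}=0$ hence $\mathbf{x}=0$; the first equation then reads $\nabla\mathbf{h}(\mathbf{x}^*)\mathbf{y}+\mathbf{S}'\mathbf{z}=0$, which by Proposition \ref{pro_02091833} yields $\mathbf{y}=0$ and $\mathbf{z}\in\textmd{Null}(\mathbf{S}')$; but then the third equation becomes $\mathbf{J}\mathbf{z}/\alpha=0$, and using $\mathbf{J}\mathbf{z}=\mathbf{z}$ we deduce $\mathbf{z}=0$, contradicting $\mathbf{v}\neq 0$. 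Thus $\lambda=0$ is not an eigenvalue, and combining all cases gives $\textmd{Re}(\lambda)>0$.

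The main obstacle I anticipate is bookkeeping around Hermitian conjugation and the consistent use of $\mathbf{J}$: the cross term naturally produces $\bar{\lambda}\|\mathbf{y}\|^2$ (not $\lambda\|\mathbf{y}\|^2$), and it is essential that taking real parts restores a single common factor $\textmd{Re}(\lambda)$ in front of $\|\mathbf{x}\|^2+\|\mathbf{y}\|^2$. The other delicate point is the dichotomy forced by the third block: one needs the equivalence $\lambda\mathbf{z}\in\textmd{Null}(\mathbf{S}')\Rightarrow\mathbf{z}\in\textmd{Null}(\mathbf{S}')$ (valid only for $\lambda\neq 0$), so the two cases $\lambda\neq 0$ and $\lambda=0$ must be dispatched separately, with the latter invoking Proposition \ref{pro_02091833} to rule out a nontrivial eigenvector.
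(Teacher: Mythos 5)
There is a genuine gap, and it has two layers. First, your entire strategy hinges on the $(3,1)$ block of $\mathbf{B}$ being $-\mathbf{J}$, so that the left-hand side of the third block equation lies in $\textmd{Range}(\mathbf{J})=\textmd{Null}(\mathbf{S}')$ and the term $\mathbf{S}'\mathbf{z}$ can be dropped from the first equation. But that entry is a typo in the statement: $\mathbf{J}$ is $n\bar{N}\times n\bar{N}$ while $\mathbf{x}\in\mathds{R}^{nN}$, so $\mathbf{J}\mathbf{x}$ is not even dimensionally consistent, and both places where the lemma is invoked (the proofs of Theorems \ref{thm_17401116} and \ref{thm_02181546}) use $-\mathbf{L}$ paired with $\mathbf{L}'$, respectively $-\mathbf{S}$ paired with $\mathbf{S}'$. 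With the corrected block $-\mathbf{S}$, the third equation reads $-\mathbf{S}\mathbf{x}+\tfrac{1}{\alpha}\mathbf{J}\mathbf{z}=\lambda\mathbf{z}$, whose left side has a component $-\mathbf{S}\mathbf{x}\in\textmd{Range}(\mathbf{S})$, so your ``key simplification'' $\mathbf{S}'\mathbf{z}=0$ no longer follows and the reduced Hermitian identity is not available. The paper's proof avoids this entirely: it forms the full quadratic form $(\hat{\mathbf{u}}',\hat{\mathbf{v}}',\hat{\mathbf{z}}')\mathbf{B}(\mathbf{u}',\mathbf{v}',\mathbf{z}')'$ and uses the skew-symmetric pairing of the off-diagonal blocks ($\mathbf{S}'$ against $-\mathbf{S}$, $\nabla\mathbf{h}$ against $-\nabla\mathbf{h}'$) to cancel all cross terms in the real part, leaving $\textmd{Re}(\beta)\left(\|\mathbf{u}\|^2+\|\mathbf{v}\|^2+\|\mathbf{z}\|^2\right)=\hat{\mathbf{u}}'\mathbf{H}\mathbf{u}+\tfrac{1}{\alpha}\hat{\mathbf{z}}'\mathbf{J}\mathbf{z}\geq 0$, and then disposes of the degenerate case $\mathbf{u}=0$, $\mathbf{J}\mathbf{z}=0$ via Proposition \ref{pro_02091833}. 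That cancellation is the robust mechanism; your reduction is not.

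Second, even if one takes the printed matrix at face value, your treatment of $\lambda=0$ is circular: you invoke ``the Hermitian identity'' $\mathbf{x}^H\mathbf{H}\mathbf{x}=\lambda\|\mathbf{x}\|^2+\bar{\lambda}\|\mathbf{y}\|^2$ to conclude $\mathbf{x}=0$, but that identity was derived under $\mathbf{S}'\mathbf{z}=0$, which you yourself justify only for $\lambda\neq 0$. For $\lambda=0$ the surviving term is $\mathbf{x}^H\mathbf{H}\mathbf{x}+\mathbf{x}^H\mathbf{S}'\mathbf{z}=0$, and $\mathbf{x}^H\mathbf{S}'\mathbf{z}$ need not vanish. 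This is not a cosmetic omission: for the printed matrix one can actually build a nontrivial nullvector (choose $\mathbf{z}_1\in\textmd{Range}(\mathbf{S})$, solve the nonsingular saddle-point system $\mathbf{H}\mathbf{x}+\nabla\mathbf{h}(\mathbf{x}^*)\mathbf{y}=-\mathbf{S}'\mathbf{z}_1$, $\nabla\mathbf{h}(\mathbf{x}^*)'\mathbf{x}=0$, and set $\mathbf{z}=\mathbf{z}_1+\alpha\mathbf{J}\mathbf{x}$, ignoring the dimension mismatch), so $0$ is an eigenvalue and no argument can close that case. In short: the $\lambda\neq 0$ computation is internally consistent but rests on a misprinted block, and the $\lambda=0$ case is unsound; you should redo the proof for the matrix with $-\mathbf{S}$ in the $(3,1)$ position using the real-part cancellation of the skew-symmetric cross terms.
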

 \begin{IEEEproof}
 Let $\beta$ be an eigenvalue of $\mathbf{B}$ and let $\left(\mathbf{u}',\mathbf{v}',\mathbf{z}'\right)'\neq 0$ be the corresponding eigenvector, where $\mathbf{u}$, $\mathbf{v}$ and $\mathbf{z}$ are complex vectors of appropriate dimensions. Denoting by $\mathbf{\hat{u}}$, $\mathbf{\hat{v}}$ and $\mathbf{\hat{z}}$ the conjugates of $\mathbf{u}$, $\mathbf{v}$ and $\mathbf{z}$, respectively we have
\begin{equation}
\label{equ_16001116}
\textmd{Re}(\beta)\left(\|\mathbf{u}\|^2+\|\mathbf{v}\|^2+\|\mathbf{z}\|^2\right)=\textmd{Re}\left\{ \left(\mathbf{\hat{u}}',\mathbf{\hat{v}}',\mathbf{\hat{z}}'\right) \mathbf{B}
\left(\begin{array}{c}
\mathbf{{u}}\\
\mathbf{{v}}\\
\mathbf{z}
\end{array}
\right)
\right\}=,
\end{equation}
$$\textmd{Re}\left\{
\mathbf{\hat{u}}' \mathbf{H}\mathbf{u}+\mathbf{\hat{u}}' \mathbf{S}'\mathbf{z}-\mathbf{\hat{z}}' \mathbf{S}\mathbf{u}+\mathbf{\hat{u}}'\nabla \mathbf{h}(\mathbf{x}^*)\mathbf{v} - \mathbf{\hat{v}}'\nabla \mathbf{h}(\mathbf{x}^*)'\mathbf{{u}}+
\mathbf{\hat{z}}' \frac{1}{\alpha}\mathbf{J}\mathbf{z}
 \right\}$$
$$=\textmd{Re}\left\{
\mathbf{\hat{u}}^T \mathbf{H}\mathbf{{u}} + \mathbf{\hat{z}}' \frac{1}{\alpha}\mathbf{J}\mathbf{z}
 \right\}.$$
Since $\mathbf{J}$ is a semi-positive definite matrix and  $\mathbf{H}$ is positive definite we have that
$$\textmd{Re}(\beta)\left(\|\mathbf{u}\|^2+\|\mathbf{v}\|^2+\|\mathbf{z}\|^2\right)>0,$$
as long as $\mathbf{u}\neq 0$ or $\mathbf{z}\notin \textmd{Range}(\mathbf{S})$ and therefore $\textmd{Re}(\beta)>0$.
In the case $\mathbf{u}=0$ and $\mathbf{z}\in \textmd{Range}(\mathbf{S})$ we get
$$\mathbf{B} \left(
\begin{array}{c}
0 \\
\mathbf{v}\\
\mathbf{z}
\end{array}
\right) = \beta \left(
\begin{array}{c}
0 \\
\mathbf{v}\\
\mathbf{z}
\end{array}
\right), $$
from where we obtain
$$\nabla \mathbf{h}(\mathbf{x}^*)\mathbf{v}+\mathbf{S}'\mathbf{z}=0.$$
But from Proposition \ref{pro_02091833}, we have that $\mathbf{v}=0$ and $\mathbf{z}\in \textmd{Null}(\mathbf{S}')$ and since $\mathbf{z}\in \textmd{Range}(\mathbf{S})$ as well, it must be that $\mathbf{z}=0$. Hence we have a contradiction since we assumed that $\left(\mathbf{u}',\mathbf{v}',\mathbf{z}'\right)\neq \mathbf{0}'$ and therefore the real part of $\beta$ must be positive.
In addition, it can be easily checked that the matrix $\mathbf{B}$ has $n$ eigenvalues equal to $\frac{1}{\alpha}$ and their corresponding eigenspace is $\left\{\left({0}',{0}',\mathbf{z}'\right)'\ |\ \mathbf{z}\in \textmd{Null}\left(\mathbf{S}'\right)\right\}$.
\end{IEEEproof}

We finalize this section by providing the proofs of the theorems describing the convergence properties of Algorithms $(\texttt{A}_1)$ and $(\texttt{A}_2)$ for solving $(\texttt{P}_2)$.
\begin{IEEEproof}[Proof of Theorem \ref{thm_17401116}]
Using the  Lagrangian function defined  in (\ref{equ_02111445}), iteration (\ref{equ_18331114})-(\ref{equ_18341114})  can be equivalently expressed as
\begin{equation}
\label{equ_02111737}
\left(\begin{array}{c}
\mathbf{x}_{k+1}\\
\boldsymbol{\mu}_{k+1}\\
\boldsymbol{\lambda}_{k+1}
\end{array}
\right)=\bar{\mathbf{M}}_{\alpha}(\mathbf{x}_k,
\boldsymbol{\mu}_k,\boldsymbol{\lambda}_k),
\end{equation}
with
{\small $$ \ \bar{\mathbf{M}}_{\alpha}(\mathbf{x},
\boldsymbol{\mu},\boldsymbol{\lambda})=
\left(\begin{array}{c}
\mathbf{x}-\alpha \nabla_{\mathbf{x}}\boldsymbol{\mathcal{L}}(\mathbf{x},
\boldsymbol{\mu},\boldsymbol{\lambda})\\
\boldsymbol{\mu}+\alpha \nabla_{\boldsymbol{\mu}}\boldsymbol{\mathcal{L}}(\mathbf{x},
\boldsymbol{\mu},\boldsymbol{\lambda})\\
\boldsymbol{\lambda}+\alpha \nabla_{\boldsymbol{\lambda}}\boldsymbol{\mathcal{L}}(\mathbf{x},
\boldsymbol{\mu},\boldsymbol{\lambda})
\end{array}
\right).$$}
It can be easily checked that $\left(\mathbf{x}^*, \boldsymbol{\mu}^*,\boldsymbol{\lambda}^*+\textmd{Null}\left(\mathbf{S}'\right) \right)$ is a set of fixed points of $\bar{\mathbf{M}}_{\alpha}$. Let us now consider the transformation $\tilde{\boldsymbol{\lambda}} = \left(\mathbf{I}-\mathbf{J}\right)\boldsymbol{\lambda}$, where $\mathbf{J}$ is the orthogonal projection operator on $\textmd{Null}(\bf{S}')$. This transformation extracts the projection of $\boldsymbol{\lambda}$ on the nullspace of $\mathbf{S}'$ from $\boldsymbol{\lambda}$ and therefore $\tilde{\boldsymbol{\lambda}}$ is the error between ${\boldsymbol{\lambda}}$ and its orthogonal projection on $\textmd{Null}\left(\mathbf{S}'\right)$. Under this transformation, iteration (\ref{equ_02111737}) becomes
\begin{equation*}
\label{equ_02111739}
\left(\begin{array}{c}
\mathbf{x}_{k+1}\\
\boldsymbol{\mu}_{k+1}\\
\tilde{\boldsymbol{\lambda}}_{k+1}
\end{array}
\right)={\mathbf{M}}_{\alpha}(\mathbf{x}_k,
\boldsymbol{\mu}_k,\tilde{\boldsymbol{\lambda}}_k)
\end{equation*}
with
$${\mathbf{M}}_{\alpha}(\mathbf{x},
\boldsymbol{\mu},\tilde{\boldsymbol{\lambda}})=
\left(\begin{array}{c}
\mathbf{x}-\alpha \nabla_{\mathbf{x}}\boldsymbol{\mathcal{L}}(\mathbf{x},
\boldsymbol{\mu},\tilde{\boldsymbol{\lambda}})\\
\boldsymbol{\mu}+\alpha \nabla_{\boldsymbol{\mu}}\boldsymbol{\mathcal{L}}(\mathbf{x},
\boldsymbol{\mu},\tilde{\boldsymbol{\lambda}})\\
\left(\mathbf{I}-\mathbf{J}\right)\tilde{\boldsymbol{\lambda}}+\alpha \nabla_{\tilde{\boldsymbol{\lambda}}}\boldsymbol{\mathcal{L}}(\mathbf{x},
\boldsymbol{\mu},\tilde{\boldsymbol{\lambda}})
\end{array}
\right),$$
where we used the fact that $(\mathbf{I}-\mathbf{J})\tilde{\boldsymbol{\lambda}} = (\mathbf{I}-\mathbf{J}){\boldsymbol{\lambda}}$, $(\mathbf{I}-\mathbf{J})\mathbf{S}\mathbf{x} = \mathbf{S}\mathbf{x} $, since $\mathbf{S}\mathbf{x} \in \textmd{Range}(\mathbf{S})$, and $\bf{S}'{\boldsymbol{\lambda}} = \bf{S}'(\tilde{\boldsymbol{\lambda}}+\mathbf{J}\boldsymbol{\lambda}) =  \bf{S}'\tilde{\boldsymbol{\lambda}}$. Clearly $\left(\mathbf{x}^*,\boldsymbol{\mu}^*,\boldsymbol{\lambda}^*\right)$ is a fixed point for $\mathbf{M}_{\alpha}$ and if $\left(\mathbf{x}_k,\boldsymbol{\mu}_k,\tilde{\boldsymbol{\lambda}}_k\right)$ converges to $\left(\mathbf{x}^*,\boldsymbol{\mu}^*,\boldsymbol{\lambda}^*\right)$, we in fact show that $\left(\mathbf{x}_k,\boldsymbol{\mu}_k,\boldsymbol{\lambda}_k\right)$ converges to $\left(\mathbf{x}^*,\boldsymbol{\mu}^*,\boldsymbol{\lambda}^*+
\textmd{Null}\left(\mathbf{S}'\right)\right)$. The derivative of the mapping $\mathbf{M}_{\alpha}\left(\mathbf{x},\boldsymbol{\mu},\boldsymbol{\lambda}\right)$ at $\left(\mathbf{x}^*,\boldsymbol{\mu}^*,\boldsymbol{\lambda}^*\right)$ is given by
$$\nabla \mathbf{M}_{\alpha}\left(\mathbf{x}^*,\boldsymbol{\mu}^*,\boldsymbol{\lambda}^*\right)=
\mathbf{I}-\alpha \mathbf{B},$$
where
$$\mathbf{B} = \left(
\begin{array}{ccc}
\nabla_{\mathbf{x}\mathbf{x}}^2\boldsymbol{\mathcal{L}}
\left(\mathbf{x}^*,\boldsymbol{\mu}^*,\boldsymbol{\lambda}^*\right) & \nabla \mathbf{h}(\mathbf{x}^*) & \mathbf{L}'\\
-\nabla \mathbf{h}(\mathbf{x}^*)' & \mathbf{0} & \mathbf{0}\\
-\mathbf{L} & \mathbf{0} & \frac{1}{\alpha}\mathbf{J}
\end{array}
\right).$$
By Lemma \ref{lem_17151116} we have that the real parts of the eigenvalues of $\mathbf{B}$ are positive and therefore we can find an $\bar{\alpha}$ so that for all $\alpha \in (0.\bar{\alpha}]$  the eigenvalues of $\nabla \mathbf{M}_{\alpha}\left(\mathbf{x}^*,\boldsymbol{\mu}^*,\boldsymbol{\lambda}^*\right)$ are strictly within the unit circle. Using a similar argument as in Proposition 4.4.1, page 387, \cite{Ber99}, there exist a norm $\|\cdot\|$ and a sphere $\mathcal{S}_{\epsilon} = \left\{(\mathbf{x}',\boldsymbol{\mu}',\boldsymbol{\lambda}')'\ | \ \|(\mathbf{x}',\boldsymbol{\mu}',\boldsymbol{\lambda}')'-
\left({\mathbf{x}^*}',{\boldsymbol{\mu}^*}',{\boldsymbol{\lambda}^*}'\right)'\|<\epsilon\right\}$
for some $\epsilon >0$ so that the induced norm of $\nabla \mathbf{M}_{\alpha}\left(\mathbf{x},\boldsymbol{\mu},\boldsymbol{\lambda}\right)$ is less than one within the sphere $\mathcal{S}_{\epsilon}$. Therefore, using the mean value theorem, it follows that $\mathbf{M}_{\alpha}\left(\mathbf{x},\boldsymbol{\mu},
\boldsymbol{\lambda}\right)$ is a contraction map for any vector in the sphere $\mathcal{S}_{\epsilon}$. By invoking the contraction map theorem (see for example Chapter 7 of \cite{Istratescu1981}) it follows that
$\left(\mathbf{x}_k,\boldsymbol{\mu}_k,\tilde{\boldsymbol{\lambda}}_k\right)$ converges to $\left({\mathbf{x}^*},{\boldsymbol{\mu}^*},{\boldsymbol{\lambda}^*}\right)$ for any initial value in $S_{\epsilon}$.
\end{IEEEproof}

\begin{IEEEproof}[Proof of Theorem \ref{thm_02181546}]
First note that the assumption on the Hessian of $\boldsymbol{\mathcal{L}}
\left(\mathbf{x},\boldsymbol{\mu},\boldsymbol{\lambda}\right)$ basically means that $\mathbf{x}^*$ is a strictly local minimizer of $(P_2)$.
 Proceeding as in the case of the proof of Theorem \ref{thm_17401116}, iteration (\ref{equ_02181523})-(\ref{equ_02181525}) can be compactly expressed as
\begin{equation}
\label{equ_02182013}
\left(\begin{array}{c}
\mathbf{x}_{k+1}\\
\boldsymbol{\mu}_{k+1}\\
\boldsymbol{\lambda}_{k+1}
\end{array}
\right)=\bar{\mathbf{M}}_{\alpha,c}(\mathbf{x}_k,
\boldsymbol{\mu}_k,\boldsymbol{\lambda}_k),
\end{equation}
with
$$\ \bar{\mathbf{M}}_{\alpha,c}(\mathbf{x},
\boldsymbol{\mu},\boldsymbol{\lambda})=
\left(\begin{array}{c}
\mathbf{x}-\alpha \nabla_{\mathbf{x}}\boldsymbol{\mathcal{L}}_c(\mathbf{x},
\boldsymbol{\mu},\boldsymbol{\lambda})\\
\boldsymbol{\mu}+\alpha \nabla_{\boldsymbol{\mu}}\boldsymbol{\mathcal{L}}_c(\mathbf{x},
\boldsymbol{\mu},\boldsymbol{\lambda})\\
\boldsymbol{\lambda}+\alpha \nabla_{\boldsymbol{\lambda}}\boldsymbol{\mathcal{L}}_c(\mathbf{x},
\boldsymbol{\mu},\boldsymbol{\lambda}),
\end{array}
\right),$$
or, expressing (\ref{equ_02181525}) in terms of the error between $\boldsymbol{\lambda}_k$ and its projection on  $\textmd{Null}\left(\mathbf{S}'\right)$, we further have
\begin{equation*}
\label{equ_02181652}
\left(\begin{array}{c}
\mathbf{x}_{k+1}\\
\boldsymbol{\mu}_{k+1}\\
\tilde{\boldsymbol{\lambda}}_{k+1}
\end{array}
\right)={\mathbf{M}}_{\alpha,c}(\mathbf{x}_k,
\boldsymbol{\mu}_k,\tilde{\boldsymbol{\lambda}}_k)
\end{equation*}
with
$${\mathbf{M}}_{\alpha,c}(\mathbf{x},
\boldsymbol{\mu},\tilde{\boldsymbol{\lambda}})=
\left(\begin{array}{c}
\mathbf{x}-\alpha \nabla_{\mathbf{x}}\boldsymbol{\mathcal{L}}_c(\mathbf{x},
\boldsymbol{\mu},\tilde{\boldsymbol{\lambda}})\\
\boldsymbol{\mu}+\alpha \nabla_{\boldsymbol{\mu}}\boldsymbol{\mathcal{L}}_c(\mathbf{x},
\boldsymbol{\mu},\tilde{\boldsymbol{\lambda}})\\
\left(\mathbf{I}-\mathbf{J}\right)\tilde{\boldsymbol{\lambda}}+\alpha \nabla_{\tilde{\boldsymbol{\lambda}}}\boldsymbol{\mathcal{L}}_c(\mathbf{x},
\boldsymbol{\mu},\tilde{\boldsymbol{\lambda}})
\end{array}
\right),$$
and $\tilde{\boldsymbol{\lambda}}_k = (\mathbf{I}-\mathbf{J}){\boldsymbol{\lambda}}_k$.
 Clearly $\left(\mathbf{x}^*,\boldsymbol{\mu}^*,\boldsymbol{\lambda}^*\right)$ is a fixed point for $\mathbf{M}_{\alpha,c}$ and we have that
$$\nabla \mathbf{M}_{\alpha,c}\left(\mathbf{x}^*,\boldsymbol{\mu}^*,\boldsymbol{\lambda}^*\right)=
\mathbf{I}-\alpha \mathbf{B}_c,$$
where
$$\mathbf{B}_c = \left(
\begin{array}{ccc}
\nabla_{\mathbf{x}\mathbf{x}}^2\boldsymbol{\mathcal{L}}_c
\left(\mathbf{x}^*,\boldsymbol{\mu}^*,\boldsymbol{\lambda}^*\right) & \nabla \mathbf{h}(\mathbf{x}^*) & \mathbf{S}'\\
-\nabla \mathbf{h}(\mathbf{x}^*)' & \mathbf{0} & \mathbf{0}\\
-\mathbf{S} & \mathbf{0} & \frac{1}{\alpha}\mathbf{J}
\end{array}
\right).$$
Since $\textmd{Null}(\mathbf{S})=\textmd{Null}(\mathbf{S}'\mathbf{S})=\textmd{Null}(\mathbf{L})$, it can be easily checked that $\textmd{Null}\left(\left[\nabla \mathbf{h}(\mathbf{x}^*),\mathbf{L}'\right]'\right)=\textmd{Null}\left(\left[\nabla \mathbf{h}(\mathbf{x}^*),\mathbf{S}'\right]'\right)$. Using the assumption that $\mathbf{x}'\nabla_{\mathbf{x}\mathbf{x}}^2\boldsymbol{\mathcal{L}}
\left(\mathbf{x}^*,\boldsymbol{\mu}^*,\boldsymbol{\lambda}^*\right)\mathbf{x}>0$ for all $\mathbf{x}\in \textmd{TC}(\mathbf{x}^*,\boldsymbol{\Omega})=\textmd{Null}\left(\left[\nabla \mathbf{h}(\mathbf{x}^*),\mathbf{L}'\right]'\right)$, according to Proposition \ref{pro_03051237} there exists a positive scalar $\bar{c}$ such that $\nabla_{\mathbf{x}\mathbf{x}}^2\boldsymbol{\mathcal{L}}_c
\left(\mathbf{x}^*,\boldsymbol{\mu}^*,\boldsymbol{\lambda}^*\right)\succ 0$ for all $c\geq \bar{c}$. Therefore, by Lemma \ref{lem_17151116}, the real parts of the eigenvalues of $\mathbf{B}_c$ are positive and consequently we can find an $\bar{\alpha}(c)$ so that for all $\alpha \in (0,\bar{\alpha}(c)]$  the eigenvalues of $\nabla \mathbf{M}_{\alpha,c}\left(\mathbf{x}^*,\boldsymbol{\mu}^*,\boldsymbol{\lambda}^*\right)$ are strictly within the unit circle. Mimicking the last part of the proof of Theorem \ref{thm_17401116}, we find that $\mathbf{M}_{\alpha,c}\left(\mathbf{x},\boldsymbol{\mu},
\boldsymbol{\lambda}\right)$ is a contraction map within a sphere centered at $\left(\mathbf{x}^*,\boldsymbol{\mu}^*,\boldsymbol{\lambda}^*\right)$, and the result follows from the contraction map theorem.
\end{IEEEproof}

\subsection{Proofs of the supporting results for the distributed algorithm based on the method of multipliers}

The key result behind Algorithm $(\texttt{A}_3)$ is Proposition \ref{pro_03051249}, whose proof is provided in what follows.

\begin{IEEEproof}[Proof of Proposition \ref{pro_03051249}]
By Proposition \ref{pro_03051237} there exists a positive scalar $\bar{c}$ such that $\nabla_{\mathbf{x}\mathbf{x}}^2{\boldsymbol{\mathcal{L}}}_c\left(\mathbf{x}^*,\boldsymbol{\eta}^*\right)\succ 0$ for  all $c\geq \bar{c}$. Following the same idea as in \cite{Bertsekas1982}, for $c>0$ we define the system of equations
\begin{eqnarray}
\label{equ_03181848}
\nabla \mathbf{F}(\mathbf{x}) + \nabla \tilde{\mathbf{h}}(\mathbf{x})\tilde{\boldsymbol{\eta}} = 0,\\
\label{equ_03181849}
\tilde{\mathbf{h}}(\mathbf{x}) + \gamma \left(\mathbf{T}{\boldsymbol{\eta}} - \tilde{\boldsymbol{\eta}}\right)/c = 0.
\end{eqnarray}
Note that compared to Proposition 2.4, \cite{Bertsekas1982}, we introduced the operator $\mathbf{T}$ acting on ${\boldsymbol{\eta}}$. From the equation (\ref{equ_03181849}) we can also note that $\tilde{\boldsymbol{\lambda}}\in \textmd{Range}(\mathbf{S})$ (consider the structure of $\mathbf{T}$ and $\tilde{\mathbf{h}}(\mathbf{x})$).
By introducing the variables
$$t = \frac{\mathbf{T}\left({\boldsymbol{\eta}} - {\boldsymbol{\eta}}^*\right)}{c},\ \gamma = \frac{1}{c},$$
the system (\ref{equ_03181848})-(\ref{equ_03181849}) becomes
\begin{eqnarray}
\label{equ_03181856}
\nabla \mathbf{F}(\mathbf{x}) + \nabla \tilde{\mathbf{h}}(\mathbf{x})\tilde{\boldsymbol{\eta}} = 0,\\
\label{equ_03181857}
\tilde{\mathbf{h}}(\mathbf{x}) + t + \gamma \left({\boldsymbol{\eta}}^* - \tilde{\boldsymbol{\eta}}\right) = 0,
\end{eqnarray}
where we used the fact that $\mathbf{T}{\boldsymbol{\eta}}^* = {\boldsymbol{\eta}}^*$. For $t=0$ and $\gamma \in [0,1/\bar{c}]$, system (\ref{equ_03181856})-(\ref{equ_03181857}) has the solution $\mathbf{x} = \mathbf{x}^*$ and $\tilde{\boldsymbol{\eta}} = {\boldsymbol{\eta}}^*$. Note that thanks to the manner we defined  (\ref{equ_03181849}), $(\mathbf{x}^*, {\boldsymbol{\eta}}^*)$ is the solution of (\ref{equ_03181856})-(\ref{equ_03181857}) and not $\left(\mathbf{x}^*, {\boldsymbol{\eta}}^*+\boldsymbol{\eta}_{\perp}\right)$, with $\boldsymbol{\eta}_{\perp}=(0,\textmd{Null}(\mathbf{S}'))$. Basically the entire proof of this proposition is based on the properties of the Jacobian of (\ref{equ_03181856})-(\ref{equ_03181857}), with respect of $(\mathbf{x},\tilde{\boldsymbol{\eta}})$ at the solution $(\mathbf{x}^*, {\boldsymbol{\eta}}^*)$. This Jacobian is given by
\begin{equation}
\label{equ_03181911}
\left[\begin{array}{cc}
\nabla_{\mathbf{x}\mathbf{x}}^2\boldsymbol{\mathcal{L}}_{0}(\mathbf{x}^*,\boldsymbol{\eta}^*) & \nabla \tilde{\mathbf{h}}(\mathbf{x}^*)\\
\nabla \tilde{\mathbf{h}}(\mathbf{x}^*)' & \gamma \mathbf{I}
\end{array}
\right].
\end{equation}
Using Proposition \ref{pro_03051237},  it can be check that for any $\gamma>0$, the Jacobian defined in (\ref{equ_03181911}) is invertible. For $\gamma = 0$, however, it turns out that the nullspace of the matrix (\ref{equ_03181911})  is given by $\left\{(\mathbf{0}',\mathbf{0}',\mathbf{w}')'\ |\ \mathbf{w}\in \textmd{Null}(
\mathbf{S}')\right\}$, and therefore, unlike Proposition 2.4, \cite{Bertsekas1982},  the Jacobian is not invertible. By choosing an arbitrarily large positive scalar $c_{\textmd{max}}$ so that $\bar{c}\leq c\leq c_{\textmd{max}}$, we in fact make sure that the matrix (\ref{equ_03181911}) is invertible for all considered values of $c$.

By defining the compact set $K = \{(0,\gamma)\ |\ \gamma\in [1/c_{\textmd{max}},1/\bar{c}]\}$ and applying the implicit function theorem (with respect to a compact set), there exist $\delta>0$, $\varepsilon>0$, and unique continuously differentiable functions $\hat{\mathbf{x}}(t,\gamma)$ and $\hat{\boldsymbol{\eta}}(t,\gamma)$, defined on $S(K;\delta)$ such that
$$\left(\|\hat{\mathbf{x}}(t,\gamma)-\mathbf{x}^*\|^*+\|\hat{\boldsymbol{\eta}}(t,\gamma)-\boldsymbol{\eta}^*\|^2\right)^{1/2}<\varepsilon \ \forall (t,\gamma)\in S(K;\delta),$$
and satisfying
\begin{eqnarray}
\label{equ_03051613}
\nabla \mathbf{F}\left(\hat{\mathbf{x}}(t,\gamma)\right)+\nabla \tilde{\mathbf{h}}\left(\hat{\mathbf{x}}(t,\gamma)\right)\hat{\boldsymbol{\eta}}(t,\gamma)=0,\\
\label{equ_03051614}
\tilde{\mathbf{h}}\left(\mathbf{\hat{\mathbf{x}}(t,\gamma)}\right)
+t+\gamma \boldsymbol{\eta}^*-\gamma \hat{\boldsymbol{\eta}}(t,\gamma)=0.
\end{eqnarray}
In addition, from the continuity of the Hessian of the augmented Lagrangian and using the fact that
$$\nabla_{\mathbf{x}\mathbf{x}}^2\boldsymbol{\mathcal{L}}_{c}(\mathbf{x}^*,\boldsymbol{\eta}^*) = \nabla_{\mathbf{x}\mathbf{x}}^2\boldsymbol{\mathcal{L}}_{0}(\mathbf{x}^*,\boldsymbol{\eta}^*)+\frac{1}{\gamma} \nabla\tilde{\mathbf{h}}(\mathbf{x}^*)\nabla \tilde{\mathbf{h}}(\mathbf{x}^*)' \succ 0, \ \forall c\geq \bar{c}$$
$\delta$ and $\varepsilon$ can be chosen so that
$$\nabla_{\mathbf{x}\mathbf{x}}^2\boldsymbol{\mathcal{L}}_{0}\left(\hat{\mathbf{x}}(t,\gamma),\hat{\boldsymbol{\eta}}(t,\gamma)\right)+\frac{1}{\gamma} \nabla \tilde{\mathbf{h}}\left(\hat{\mathbf{x}}(t,\gamma)\right)\nabla \tilde{\mathbf{h}}\left(\hat{\mathbf{x}}(t,\gamma)\right)' \succ 0 \ \forall (t,\gamma)\in S(K;\delta),\ \bar{c} \leq c\leq c_{\textmd{max}}.$$

For $\bar{c} \leq c\leq c_{\textmd{max}}$ and $\|\mathbf{T}\boldsymbol{\eta}-\boldsymbol{\eta}^*\|<c\delta$ we define
$$\mathbf{x}(\boldsymbol{\eta},c) =\hat{\mathbf{x}}\left(\frac{\mathbf{T}\boldsymbol{\eta}-\boldsymbol{\eta}^*}{c},\frac{1}{c}\right),
\tilde{\boldsymbol{\eta}}(\boldsymbol{\eta},c) =\hat{\boldsymbol{\eta}}\left(\frac{\mathbf{T}\boldsymbol{\eta}-\boldsymbol{\eta}^*}{c},\frac{1}{c}\right),$$
and we obtain that for $(\boldsymbol{\eta},c)\in D$
\begin{eqnarray}
\label{equ_03061507}
\nabla \mathbf{F}\left({\mathbf{x}}(\boldsymbol{\eta},c)\right)+\nabla \tilde{\mathbf{h}}\left({\mathbf{x}}(\boldsymbol{\eta},c)\right)\tilde{\boldsymbol{\eta}}(\boldsymbol{\eta},c)=0,\\
\label{equ_03061508}
\tilde{\boldsymbol{\eta}}(\boldsymbol{\eta},c) = \mathbf{T}{\boldsymbol{\eta}}+c
\tilde{\mathbf{h}}\left({\mathbf{x}}(\boldsymbol{\eta},c)\right),\\
\label{equ_03061509}
\nabla_{\mathbf{x}\mathbf{x}}^2\boldsymbol{\mathcal{L}}_{c}\left({\mathbf{x}}(\boldsymbol{\eta},c),\boldsymbol{\eta}\right)\succ 0.
\end{eqnarray}
which basically proves the result, except (\ref{equ_03051319}) and (\ref{equ_03051321}).

The inequalities (\ref{equ_03051319})-(\ref{equ_03051321}) are based on the differentiation of (\ref{equ_03181856})-(\ref{equ_03181857}), with respect to $t$ and $\gamma$. Defining $\gamma_{\textmd{min}} = 1/c_{\textmd{max}}$, we obtain that for all $(t,\gamma)$ such $\|t\|< \delta$ and $\gamma \in [\gamma_{\textmd{min}}, 1/\bar{c}]$
\begin{eqnarray}
\label{equ_03191627}
\left[
\begin{array}{c}
\hat{\mathbf{x}}(t,\gamma)-\mathbf{x}^*\\
\hat{\boldsymbol{\eta}}(t,\gamma) - {\boldsymbol{\eta}}^*
\end{array}
\right]= \left[\begin{array}{c}
\hat{\mathbf{x}}(t,\gamma)-\hat{\mathbf{x}}(0,\gamma_{\textmd{min}})\\
\hat{\boldsymbol{\eta}}(t,\gamma) - \hat{\boldsymbol{\eta}}(0,\gamma_{\textmd{min}})
\end{array}
\right]=\\
\nonumber
\int_{0}^{1} A\left(\xi t,(\gamma -\gamma_{\textmd{min}} )\xi+\gamma_{\textmd{min}}\right)
\left[\begin{array}{cc}
0 & 0\\
-I & \hat{\boldsymbol{\eta}}(\xi t,(\gamma -\gamma_{\textmd{min}} )\xi+\gamma_{\textmd{min}})
\end{array}
\right]
\left[\begin{array}{c}
t\\
\gamma -\gamma_{\textmd{min}}
\end{array}
\right]
d\xi,
\end{eqnarray}
where
\begin{equation}
\label{equ_03191629}
A(t,\gamma) =
\left[\begin{array}{cc}
\nabla_{\mathbf{x}\mathbf{x}}^2\boldsymbol{\mathcal{L}}_{0}\left(\hat{\mathbf{x}}(t,\gamma),\hat{\boldsymbol{\eta}}(t,\gamma)\right) & \nabla \tilde{\mathbf{h}}\left(\hat{\mathbf{x}}(t,\gamma)\right)\\
\nabla \tilde{\mathbf{h}}\left(\hat{\mathbf{x}}(t,\gamma)\right)' & \gamma \mathbf{I}
\end{array}
\right]^{-1}.
\end{equation}

In the above expressions we used the fact that $\hat{\mathbf{x}}(0,\gamma)=\mathbf{x}^*$ and $\hat{\boldsymbol{\eta}}(0,\gamma) = \boldsymbol{\eta}^*$ for all $\gamma \in [\gamma_{\textmd{min}},1/\bar{c}]$ and that $A(t,\gamma)$ is well defined for all $(t,\gamma)\in D$. Note that compared to the standard case we introduced $\gamma_{\textmd{min}}$ in (\ref{equ_03191627}) to cope with the fact that $c$ is upper bounded by $c_{\textmd{max}}$.
The rest of the proof follows pretty much  identical steps as in Proposition 2.4, page 108 of \cite{Bertsekas1982}. It is based on applying the norm operator on (\ref{equ_03191627}) and on the fact that for all $(t,\gamma)\in D$, $\|A(t,\gamma)\|$ is uniformly bounded, and where we also use the fact that $\|\mathbf{T}\boldsymbol{\eta}-\boldsymbol{\eta}^*\|\leq \|\boldsymbol{\eta}-\boldsymbol{\eta}^*\|$.
\end{IEEEproof}

The relevant quantity to study convergence is $\tilde{\boldsymbol{\eta}}(\boldsymbol{\eta},c)-\boldsymbol{\eta}^*$. The following result, which is the counterpart of Proposition 2.6 of \cite{Bertsekas1982}, permits the calculation of an upper bound for this quantity that will be used for the convergence analysis.
\begin{proposition}
\label{pro_03061450}
Let Assumptions \ref{assu_02091524} and \ref{assu_02091703} hold, let $(\mathbf{x}^*,\boldsymbol{\eta}^*)$, be a local minimizer-Lagrange multipliers pair of $(P_2)$, and let $\bar{c}$ and $\delta$ as in Proposition \ref{pro_03051249}. For all $(\boldsymbol{\eta},c)$ in the set $D$ defined by (\ref{equ_03051258}), there holds
\begin{equation}
\label{equ_03061454}
\tilde{\boldsymbol{\eta}}(\boldsymbol{\eta},c)-\boldsymbol{\eta}^* = \int_{0}^1N_c\left(\boldsymbol{\eta}^*+\zeta \mathbf{T}(\boldsymbol{\eta}-\boldsymbol{\eta}^*)\right)\mathbf{T}(\boldsymbol{\eta}-\boldsymbol{\eta}^*)d\zeta,
\end{equation}
where for all $(\boldsymbol{\eta},c)\in D$, the matrix $N_c$ is given by
\begin{equation}
\label{equ_03061458}
N_c(\boldsymbol{\eta}) = \mathbf{I}-c\nabla \tilde{\mathbf{h}}\left(\mathbf{x}(\boldsymbol{\eta},c)\right)'
\left\{\nabla_{\mathbf{x}\mathbf{x}}^2{\boldsymbol{\mathcal{L}}}_c\left(\mathbf{x}(\boldsymbol{\eta},c),\boldsymbol{\eta}\right)\right\}^{-1}
\nabla\tilde{\mathbf{h}}\left(\mathbf{x}(\boldsymbol{\eta},c)\right).
\end{equation}
\end{proposition}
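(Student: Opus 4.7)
The plan is to apply the fundamental theorem of calculus along the straight line path $\boldsymbol{\eta}(\zeta)=\boldsymbol{\eta}^*+\zeta\mathbf{T}(\boldsymbol{\eta}-\boldsymbol{\eta}^*)$, $\zeta\in[0,1]$, and identify the derivative of $\tilde{\boldsymbol{\eta}}(\boldsymbol{\eta}(\zeta),c)$ with the matrix $N_c$ applied to $\mathbf{T}(\boldsymbol{\eta}-\boldsymbol{\eta}^*)$. First I record two algebraic facts. Since $\mathbf{J}$ is an orthogonal projection it is idempotent; hence $\mathbf{I}-\mathbf{J}$ is idempotent, and so $\mathbf{T}^2=\mathbf{T}$. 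Moreover, $\mathbf{T}\boldsymbol{\eta}^*=\boldsymbol{\eta}^*$ because by Proposition \ref{lem_18011114} we have $\boldsymbol{\lambda}^*\in\textmd{Range}(\mathbf{S})$, so $(\mathbf{I}-\mathbf{J})\boldsymbol{\lambda}^*=\boldsymbol{\lambda}^*$. In particular, $\boldsymbol{\eta}(\zeta)\in D$ for every $\zeta\in[0,1]$, since $\|\mathbf{T}\boldsymbol{\eta}(\zeta)-\boldsymbol{\eta}^*\|=\zeta\|\mathbf{T}(\boldsymbol{\eta}-\boldsymbol{\eta}^*)\|\leq\|\mathbf{T}\boldsymbol{\eta}-\boldsymbol{\eta}^*\|<c\delta$.

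Next I observe that $\mathbf{x}(\boldsymbol{\eta},c)$, and consequently $\tilde{\boldsymbol{\eta}}(\boldsymbol{\eta},c)$, depends on $\boldsymbol{\eta}$ only through $\mathbf{T}\boldsymbol{\eta}$. Writing (\ref{equ_02181334}) as $\boldsymbol{\mathcal{L}}_c(\mathbf{x},\boldsymbol{\eta})=\mathbf{F}(\mathbf{x})+\boldsymbol{\eta}'\tilde{\mathbf{h}}(\mathbf{x})+\tfrac{c}{2}\|\tilde{\mathbf{h}}(\mathbf{x})\|^2$, the only term depending on $\boldsymbol{\eta}$ is $\boldsymbol{\mu}'\mathbf{h}(\mathbf{x})+\boldsymbol{\lambda}'\mathbf{S}\mathbf{x}$; since $\mathbf{S}\mathbf{x}\in\textmd{Range}(\mathbf{S})$ is orthogonal to $\textmd{Null}(\mathbf{S}')$, we have $\boldsymbol{\lambda}'\mathbf{S}\mathbf{x}=[(\mathbf{I}-\mathbf{J})\boldsymbol{\lambda}]'\mathbf{S}\mathbf{x}$, so the objective of problem (\ref{equ_03051312}) depends on $\boldsymbol{\eta}$ only via $\mathbf{T}\boldsymbol{\eta}$. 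By (\ref{equ_03051431}), $\tilde{\boldsymbol{\eta}}(\boldsymbol{\eta},c)=\mathbf{T}\boldsymbol{\eta}+c\tilde{\mathbf{h}}(\mathbf{x}(\boldsymbol{\eta},c))$ inherits the same property. Using $\mathbf{T}^2=\mathbf{T}$ and $\mathbf{T}\boldsymbol{\eta}^*=\boldsymbol{\eta}^*$, one checks $\mathbf{T}\boldsymbol{\eta}(1)=\mathbf{T}\boldsymbol{\eta}$, hence $\tilde{\boldsymbol{\eta}}(\boldsymbol{\eta}(1),c)=\tilde{\boldsymbol{\eta}}(\boldsymbol{\eta},c)$. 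Also, by the uniqueness in Proposition \ref{pro_03051249}, $\mathbf{x}(\boldsymbol{\eta}^*,c)=\mathbf{x}^*$ and $\tilde{\mathbf{h}}(\mathbf{x}^*)=0$, so $\tilde{\boldsymbol{\eta}}(\boldsymbol{\eta}^*,c)=\boldsymbol{\eta}^*$.

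Next I compute the derivative of $g(\zeta):=\tilde{\boldsymbol{\eta}}(\boldsymbol{\eta}(\zeta),c)=\mathbf{T}\boldsymbol{\eta}(\zeta)+c\tilde{\mathbf{h}}(\mathbf{x}(\boldsymbol{\eta}(\zeta),c))$. By the chain rule,
$$g'(\zeta)=\bigl[\mathbf{T}+c\nabla\tilde{\mathbf{h}}(\mathbf{x})'J_{\mathbf{x}}(\boldsymbol{\eta}(\zeta),c)\bigr]\mathbf{T}(\boldsymbol{\eta}-\boldsymbol{\eta}^*),$$
where $\mathbf{x}:=\mathbf{x}(\boldsymbol{\eta}(\zeta),c)$ and $J_{\mathbf{x}}$ is the Jacobian of $\mathbf{x}(\cdot,c)$. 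Differentiating the optimality condition $\nabla_{\mathbf{x}}\boldsymbol{\mathcal{L}}_c(\mathbf{x}(\boldsymbol{\eta},c),\boldsymbol{\eta})=0$ with respect to $\boldsymbol{\eta}$ yields $\nabla^2_{\mathbf{x}\mathbf{x}}\boldsymbol{\mathcal{L}}_cJ_{\mathbf{x}}+\nabla\tilde{\mathbf{h}}(\mathbf{x})=0$, and by Proposition \ref{pro_03051249}(c) the Hessian is invertible on $D$, hence $J_{\mathbf{x}}=-[\nabla^2_{\mathbf{x}\mathbf{x}}\boldsymbol{\mathcal{L}}_c]^{-1}\nabla\tilde{\mathbf{h}}(\mathbf{x})$. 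Substituting and using $\mathbf{T}^2(\boldsymbol{\eta}-\boldsymbol{\eta}^*)=\mathbf{T}(\boldsymbol{\eta}-\boldsymbol{\eta}^*)$ to rewrite the leading $\mathbf{T}\cdot\mathbf{T}$ as $\mathbf{I}\cdot\mathbf{T}$ on the column space of interest,
$$g'(\zeta)=\Bigl\{\mathbf{I}-c\nabla\tilde{\mathbf{h}}(\mathbf{x})'[\nabla^2_{\mathbf{x}\mathbf{x}}\boldsymbol{\mathcal{L}}_c]^{-1}\nabla\tilde{\mathbf{h}}(\mathbf{x})\Bigr\}\mathbf{T}(\boldsymbol{\eta}-\boldsymbol{\eta}^*)=N_c(\boldsymbol{\eta}(\zeta))\mathbf{T}(\boldsymbol{\eta}-\boldsymbol{\eta}^*).$$
Integrating $g'$ from $0$ to $1$ and invoking $g(0)=\boldsymbol{\eta}^*$, $g(1)=\tilde{\boldsymbol{\eta}}(\boldsymbol{\eta},c)$ yields (\ref{equ_03061454}).

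I expect no serious obstacle; the only mildly subtle point is the $\mathbf{T}$-invariance of $\mathbf{x}(\boldsymbol{\eta},c)$, which is what forces $N_c$ (rather than a more cumbersome matrix mixing $\mathbf{T}$ and $\mathbf{I}$) to appear in the integrand. Once that observation is in place, the idempotence $\mathbf{T}^2=\mathbf{T}$ converts the chain-rule factor $\mathbf{T}$ into $\mathbf{I}$ on the relevant subspace, collapsing the formula to exactly the form of Proposition 2.6 of \cite{Bertsekas1982} but now specialized to the lifted, non-regular problem $(\texttt{P}_2)$.
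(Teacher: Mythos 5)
Your proof is correct and follows essentially the same route as the paper, which simply defers to the fundamental-theorem-of-calculus argument of Proposition 2.6 in Bertsekas (1982) and notes the insertion of the operator $\mathbf{T}$ together with the identity $\mathbf{T}\boldsymbol{\eta}^*=\boldsymbol{\eta}^*$. You have merely written out in full the details the paper leaves implicit --- the $\mathbf{T}$-invariance of $\mathbf{x}(\boldsymbol{\eta},c)$, the idempotence $\mathbf{T}^2=\mathbf{T}$, and the implicit differentiation yielding $J_{\mathbf{x}}=-[\nabla^2_{\mathbf{x}\mathbf{x}}\boldsymbol{\mathcal{L}}_c]^{-1}\nabla\tilde{\mathbf{h}}(\mathbf{x})$ --- and each of these steps checks out.
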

\begin{IEEEproof}
The proof is similar to the proof of Proposition 2.6, page 115, \cite{Bertsekas1982}, with the difference that in (\ref{equ_03061454}) we introduce the operator $\mathbf{T}$, due the definition of $\tilde{\boldsymbol{\eta}}(\boldsymbol{\eta},c)$ in (\ref{equ_03061508}). In addition, in the change of variable in (\ref{equ_03061454}), we made use of the fact that $\mathbf{T}\boldsymbol{\eta}^* = \boldsymbol{\eta}^*$. Note that for $(\boldsymbol{\eta},c)\in D$, by (\ref{equ_03061509}) we guarantee that $\nabla_{\mathbf{x}\mathbf{x}}^2{\boldsymbol{\mathcal{L}}}_c\left(\mathbf{x}(\boldsymbol{\eta},c),\boldsymbol{\eta}\right)\succ 0$, and therefore its inverse is well defined. 
\end{IEEEproof}

We are now ready to provide the proof of Theorem \ref{pro_03061521}.
\begin{IEEEproof}[Proof of Theorem \ref{pro_03061521}]
The proof follows the same lines as the proof of Proposition 2.7 of \cite{Bertsekas1982}. The proof is based on the results introduced in Proposition \ref{pro_03051249} which states that the matrix $\nabla_{\mathbf{x}\mathbf{x}}^2{\boldsymbol{\mathcal{L}}}_c\left(\mathbf{x}(\boldsymbol{\eta},c),\boldsymbol{\eta}\right)$ is invertible for all $(\boldsymbol{\eta},c)\in D$ and therefore, the matrix $N_c(\boldsymbol{\eta})$ defined in (\ref{equ_03061458}) of Proposition \ref{pro_03061450} is well defined.

Following the foot steps of the proof of Proposition 2.7 of \cite{Bertsekas1982}, we have that the eigenvalues of $N_c(\boldsymbol{\eta}^*)$ can be expressed as
$$\boldsymbol{\sigma}_i(c) = \frac{{\mathbf{e}}_i}{{\mathbf{e}}_i+c}, \ i=1,\ldots,nN,$$
and from inequality  $\bar{c}>\max\{-2{\mathbf{e}}_1,\ldots,-2{\mathbf{e}}_{nN}\}$, we have that
\begin{equation}
\label{equ_03062040}
\max_{i=1,\ldots nN}\left|\boldsymbol{\sigma}_i(c)\right|<1,\ \forall c\geq \bar{c}.
\end{equation}
By Proposition \ref{pro_03051249}, for all $(\boldsymbol{\eta},c) \in D$ we have that $\mathbf{x}(\boldsymbol{\eta},c)$ and $\tilde{\boldsymbol{\eta}}(\boldsymbol{\eta},c)$ satisfying (\ref{equ_03061507})-(\ref{equ_03061508}), are continuously differentiable, and therefore for any $\varepsilon_1>0$ there exists a $\delta_1\in (0,\delta]$ such that, for all $(\boldsymbol{\eta},c)\in D_1=\{(\boldsymbol{\eta},c)\ |\ \|\mathbf{T}\boldsymbol{\eta}-\boldsymbol{\eta}^*\|/c<\delta_1,\bar{c}\leq c\leq c_{\textmd{max}} \}$ we have
$$\|N_c(\boldsymbol{\eta})\|\leq \|N_c(\boldsymbol{\eta}^*)\|+\varepsilon_1=\max_{i=1,\ldots nN}\left|\boldsymbol{\sigma}_i(c)\right|+\varepsilon_1,$$
where the matrix $N_c(\boldsymbol{\eta})$ was defined in (\ref{equ_03061458}).

Using (\ref{equ_03061454}) of Proposition \ref{pro_03061450} together with the above inequality we have that for all $(\boldsymbol{\eta},c)\in D_1$
\begin{eqnarray}
\nonumber
\|\tilde{\boldsymbol{\eta}}({\boldsymbol{\eta}},c)-{\boldsymbol{\eta}}^*\|\leq \int_{0}^1  \left\|N_c\left(\boldsymbol{\eta}^*+\zeta \mathbf{T}(\boldsymbol{\eta}-\boldsymbol{\eta}^*)\right)\right\| \|\mathbf{T}(\boldsymbol{\eta}-\boldsymbol{\eta}^*)\|d\zeta \leq \\
\label{equ_03071729}
\leq \left(\max_{i=1,\ldots nN}\left|\boldsymbol{\sigma}_i(c)\right|+\varepsilon_1\right)\|\mathbf{T}\| \|\boldsymbol{\eta}-\boldsymbol{\eta}^*\|\leq
\left(\max_{i=1,\ldots nN}\left|\boldsymbol{\sigma}_i(c)\right|+\varepsilon_1\right)\|\boldsymbol{\eta}-\boldsymbol{\eta}^*\|,
\end{eqnarray}
where the last inequality followed from the fact that $\mathbf{T}\boldsymbol{\eta}^* = \boldsymbol{\eta}^*$ and $\|\mathbf{T}\|=1$. Keeping in mind that $\varepsilon_1$ can be chosen arbitrarily small, we have that there exists $\rho(\varepsilon_1,c) = \max_{i=1,\ldots nN}\left|\boldsymbol{\sigma}_i(c)\right|+\varepsilon_1$ so that $\rho(\varepsilon_1,c)\in (0,1)$ for all $(\boldsymbol{\eta},c)\in D_1$, and therefore
$$\|\tilde{\boldsymbol{\eta}}({\boldsymbol{\eta}},c)-{\boldsymbol{\eta}}^*\|\leq \rho(\varepsilon_1,c) \|\boldsymbol{\eta}-\boldsymbol{\eta}^*\|.$$
From the above, together with (\ref{equ_03051319}) of Proposition \ref{pro_03051249} we have that $\boldsymbol{\eta}_k\rightarrow \boldsymbol{\eta}^* $ and $\mathbf{x}(\boldsymbol{\eta}_k,c_k)\rightarrow \mathbf{x}^*$, and the results for the rates of convergence follow from (\ref{equ_03071729}).
\end{IEEEproof}

\end{document}